\documentclass[11pt,reqno]{amsart}
\usepackage{graphicx} 
\usepackage[margin=1.5in]{geometry}
\usepackage{amsfonts, amssymb}
\usepackage{parskip}
\usepackage{amsthm}
\usepackage{amsmath}
\numberwithin{equation}{section}
\usepackage{fancyhdr}
\usepackage{algorithm}
\usepackage{algpseudocode}
\usepackage{mathtools}
\usepackage{subcaption}
\usepackage{hyperref}
\usepackage{parskip}

\usepackage{natbib}

\newtheorem{lemma}{Lemma}[section] 
\newtheorem{theorem}{Theorem} 

\newtheorem{proposition}{Proposition}

\theoremstyle{definition}

\newtheorem{assumption}{Assumption}[section] 
\theoremstyle{remark}
\newtheorem{remark}{Remark}[section]

\theoremstyle{definition}

\usepackage{blindtext}
\usepackage{hyperref}

\usepackage{tikz}
\usetikzlibrary{automata, arrows.meta, positioning}

\newcommand{\eps}{\varepsilon}

\newcommand{\R}{\mathbb{R}}

\newcommand{\cE}{\mathcal{E}}

\newcommand{\PP}{\mathbb{P}}
\newcommand{\QQ}{\mathbb{Q}}
\newcommand{\RR}{\mathbb{R}}

\newcommand{\ZZ}{\mathbb{Z}}

\newcommand{\bbr}{\RR}
\newcommand{\bbz}{\ZZ}
\newcommand{\bbq}{\QQ}
\newcommand{\one}{{\bf 1}}
\newcommand{\Var}{{\rm Var}}
\newcommand{\vep}{\varepsilon}

\hypersetup{
    colorlinks=true,
    linkcolor=blue,
    filecolor=magenta,      
    urlcolor=cyan,
    pdftitle={Overleaf Example},
    pdfpagemode=FullScreen,
    }

\usepackage{etoolbox, xcolor}
\newtoggle{answers}
\newcommand{\interior}[1]{%
  {\kern0pt#1}^{\mathrm{o}}%
}

\newcommand{\lims}{\lim\limits^{{-}\kern-3.8pt{-}}}
\newcommand{\limi}{\lim\limits_{{-}\kern-3.8pt{-}}}

\begin{document}

\title[Clustering of large deviations]{Clustering of large deviations in heavy-tailed moving averages:
  the catastrophe principle in the long-memory case}
\author{Jiaqi Wang, Gennady Samorodnitsky$^*$ }
\address{School of Operations Research and Information Engineering\\
Cornell University}
\email{jw2382@cornell.edu, gs18@cornell.edu}

\toggletrue{answers}

\thanks{ $^*$The corresponding author. Research is partially
  supported by NSF grant DMS-2310974 at Cornell University.}

\subjclass{Primary: 60F10, 60G10}
\keywords{infinite moving average, long-range dependence, regular variation, large deviations, clustering }










\begin{abstract}
Clustering of large deviations events in a stationary stochastic
process depends critically on the 
interplay between the tail behavior of the marginal distribution and
the strength of temporal dependence. In the class of doubly infinite
moving average processes, when the memory is short,
previous work 
has established a sharp contrast in the clustering patterns between
light- and heavy-tailed  settings, governed by \textit{conspiracy} and
\textit{catastrophe} principles  respectively. It has also been
described how long memory interacts with the conspiracy principle to
affect clustering in the light-tailed case. This paper addresses the
interaction of long memory with the catastrophe principle in the
heavy-tailed case. It turns out that long memory generally allows for
a wider range of catastrophes to play a role and leads to longer and
qualitatively different clustering patterns. 
\end{abstract}

\maketitle

\section{Introduction}

We study clustering of large deviations events. These events are, by
definition, very rare, but may have a major effect. This effect is
multiplied when the large deviations event cluster, i.e. happen in 
relatively quick succession; one only needs to think of hurricanes
arriving in bands, earthquakes  in earthquake swarms, and market
drawdowns in flurries. Understanding how clustering of rare events
occurs is essential for assessing compound risk and designing
resilient systems.

Theory of large deviations is one of the most important and
established parts of probability theory and its applications, and it
involves studying not only how small the probabilities of large
deviations events are, but also how these events tend to occur. In
particular, it is well understood  that in stochastic
processes the mechanism of large 
deviations depends crucially on the ``tails''. When the tails are
``light'', large deviations events tend to occur due to
``conspiracy'', i.e. due to many small   coordinated distributional
shifts; see e.g. \cite{dembo:zeitouni:1998,
  deuschel:stroock:1989}. When the tails are ``heavy'',  by contrast,
large deviations events tend to be  driven by one or a few exceptional
values of the process —the so-called ``catastrophe principle''' see
e.g. \cite{denisov:dieker:shneer:2008, mikosch:winterberger:2013}. The 
study of clustering of large deviations is, however, relatively
new. The papers \cite{chakrabarty:samorodnitsky:2024,
  chakrabarty:samorodnitsky:2023} investigated how how large
deviations for partial sums of infinite moving average processes with
exponentially light tails cluster, thus shedding light on how the 
``conspiracy'' mechanism induces clustering, both in the short memory
and the long memory cases, with the latter situation demonstrating how the 
``conspiracy'' mechanism interacts with long memory. 

It turns out that, when the ``catastrophe principle'' applies, it
affects clustering of large deviation events in a completely different
manner. This was demonstrated in \cite{wang:samorodnitsky:2025} for
partial sums of infinite moving average processes with regularly
varying tails. In the short-memory regime considered there,
  a large deviation event is generated by a single large innovation,
  and this innovation is localized, on the linear scale, around the
  large deviation event. In fact, when linearly scaled, this 
location is asymptotically uniform over a finite range.  This causes 
 the resulting cluster of large deviation events to be limited in size
 and have relatively simple structure. The question of how the
 ''catastrophe principle'' interacts with long memory remained open. 

This paper aims to close this gap. We continue to consider partial
sums of moving average processes with regularly varying tails, but now
the process has long  memory. Large deviations are still 
explained by the ``catastrophe principle'', and so are still due to 
a single large innovation. However, because the dependence persists
over time,  the location  of the  dominant innovation is no longer
asymptotically uniform, and its effect of the  dominant innovation is
no longer localized. Instead, it reverberates through 
the process, creating clusters  whose span and shape are dictated by
the decay profile of the memory.


When a stationary process is described as an infinite moving average
process, it is broadly accepted, and is almost inevitable, to use the
rate of decay of the coefficients to describe the cases when the
memory is ``short'' and ``long''. This is what we do in this paper as
well. In other models, the length of memory may be better described
through different means. For example, when the stationary process is a
stable process or, more generally, an infinitely divisible process,
ergodic-theoretic-based distinction between short memory and long
memory may be appropriate; see e.g.
\cite{samorodnitsky:2004a,owada:samorodnitsky:2015a,
  chen:samorodnitsky:2022}, with a 
comprehensive description in \cite{samorodnitsky:2016}. We leave a
discussion of how ergodic-theoretic-based notions of long memory
affect clustering of large deviation to future work.

In this paper we consider $\RR^d$-valued infinite moving average processes
\begin{equation}\label{eq:def_moving_avg}
    X_k = \sum_{i=-\infty}^\infty A_i Z_{k-i}, \  k\in\ZZ,
\end{equation}
where $(Z_i)_{i\in\ZZ}$ are i.i.d.\ $\RR^d$ valued noise vectors, with generic element $Z$, and $(A_i)_{i\in\ZZ}$ are deterministic $d\times d$ coefficient matrices. 
We write $\|A\| = \|A\|_2$ for the $\ell_2/\ell_2$ operator norm of a matrix $A$, and all vector norms are Euclidean.


 As in \cite{wang:samorodnitsky:2025} we assume that 
the random vectors $(Z_i)$ are  regularly varying, with exponent
$\alpha>1$.  That is,
there exists a Radon measure $\nu$  on $\bar{\RR}^d \setminus \{0\}$
({\it   the tail measure}) such that the following vague convergence holds. 
\begin{equation}\label{eq:def_regular_varying_Rd}
    \frac{\PP(Z \in u \cdot)}{\PP(\|Z\| > u)} \xrightarrow{v}
    \nu(\cdot), \ u\to\infty, 
\end{equation}
The tail measure $\nu$ has the scaling property  $\nu(c \; \cdot) =
c^{-\alpha} \nu (\cdot)$ for any $c> 0$.  The assumption  $\alpha>1$
assures that the noise variables have a finite mean .
Throughout the paper we  assume  that $E[Z] = 0$. 

The matrices $(A_i)$ in \eqref{eq:def_moving_avg}, clearly, must
decay fast enough for the series $(X_k)_{k\geq 0}$ to converge. Sufficient conditions
are given in \cite{hult:samorodnitsky:2008}, and they are
\begin{align} \label{e:cond.conv}
&\sum_{i=-\infty}^\infty \|A_i\|^{\alpha-\vep}<\infty \ \ \text{for
                                    some} \ \ \vep>0 \ \ \text{if} \
                                    \ 1<\alpha\leq 2,\\
\notag &\sum_{i=-\infty}^\infty \|A_i\|^{2}<\infty \ \ \text{if} \
                                    \ \alpha>2.
\end{align}
It is common to view infinite moving average processes with
coefficients satisfying the assumption
\begin{equation} \label{e:summ}
\sum_{i=-\infty}^\infty \|A_i\|<\infty
\end{equation}
as having short memory; this is the assumption used in
\cite{wang:samorodnitsky:2025}. Clearly,   this is
a stronger condition than \eqref{e:cond.conv}.

One typically views an infinite moving average process
satisfying \eqref{e:cond.conv} but not the summability condition
\eqref{e:summ} as having long memory. It is in this situation that
\cite{chakrabarty:samorodnitsky:2023} analyzed clustering of large
deviations for processes with exponentially light tails. This is also
the situation we consider in the current paper.

We will impose an asymptotic assumption on the matrices
$(A_i)$. Specifically, we assume that
\begin{equation}\label{eq:def_coeff1}
   \lim_{m\to\infty} A_m/a_m = B_+, 
   \qquad 
   \lim_{m\to -\infty} A_m/a_{-m} = B_-
 \end{equation}
 for some $d\times d$ matrices $B_+$ and $B_-$, where
 $(a_m, \, m\geq 0) $ is a real-valued positive sequence that is regularly varying
 with exponent $\beta$. The exponent of regular variation is assumed
 to be in the range 
\begin{equation} \label{e:beta.tange}
    \beta \in \Big(-1,\; -\max\Big\{\tfrac{1}{\alpha}, \tfrac{1}{2}\Big\}\Big).
\end{equation} 

One immediately sees that, in this situation, the convergence condition
\eqref{e:cond.conv} holds,  but the summability condition
\eqref{e:summ} fails.

For the long memory infinite moving average process defined as
above, we study clustering of rare events related to  sums of
consecutive values of the process. We denote 
\begin{equation}\label{def:rareevent}
    E_j(n, \Gamma) = \left\{ S_j^n = \sum_{k=j}^{j+n-1} X_k \in \lambda_n \Gamma \right\}
  \end{equation}
for a measurable set $\Gamma$ bounded away from the origin (and
satisfying additional conditions to be stated in the
sequel). Furthermore, $(\lambda_n)$ is a sequence increasing to
infinity fast enough.  Specifically, we assume that
\begin{equation} \label{e:lambda.n}
    \liminf_{n\to\infty} \frac{\log \lambda_n}{\log n} >1 + \beta + \max\Big\{\tfrac{1}{2}, \tfrac{1}{\alpha}\Big\}.
  \end{equation}
   \begin{remark} \label{rk:lambda.seq}
 It is easy to see that, if a sequence $(\lambda_n)$ satisfies
 \eqref{e:lambda.n}, then the events $\bigl(E_j(n,\Gamma)\bigr)$ are
 rare events.  Indeed,  we can write
 \begin{equation} \label{e:sum.repr}
   S_0^n = \sum_{i=-\infty}^\infty
   \left(\sum_{j=-i}^{n-i}A_j\right)Z_i. 
 \end{equation}
If $\alpha>2$, then by the assumption \eqref{eq:def_coeff1} 
 \begin{align*}
E\|S_0^n\|^2 = \sum_{i=-\infty}^\infty E\left\|
   \left(\sum_{j=-i}^{n-i}A_j\right)Z_i\right\|^2 \sim c\,
   n^{2(\beta+1)+1} \ \text{as} \ \ n\to\infty
 \end{align*}
 for some $c>0$, so $\lambda_n^{-1}S_0^n\to 0$. Since the  set
 $\Gamma$ is bounded away from the origin, the events
 $\bigl(E_j(n,\Gamma)\bigr)$ are  rare. Similarly, if $1<\alpha\leq
 2$, then for  $p\in (1,\alpha)$ and sufficiently close to $\alpha$,
 by the by the Marcinkiewicz-Zygmund
inequality,
 $$
 E\|S_0^n\|^p \leq c_1 \sum_{i=-\infty}^\infty E\left\|
   \left(\sum_{j=-i}^{n-i}A_j\right)Z_i\right\|^p \sim c_2\,
   n^{p(\beta+1)+1} \ \text{as} \ \ n\to\infty
$$
for some $c_1,c_2>0$, so $\lambda_n^{-1}S_0^n\to 0$, and the events
 $\bigl(E_j(n,\Gamma)\bigr)$ are, once again, rare. 
\end{remark}

We finish this introduction with a brief description of the notation
used in the paper. For a set $\Psi\subset\RR^d$ and $\epsilon>0$ we
denote by  $\Psi^\epsilon = \{y: dist(\Psi, y) \leq \epsilon\}$ 
 the outer neighborhood of $\Psi$ and by $\Psi^{-\epsilon} = ((\Psi^c)^\epsilon)^c$
its inner neighborhood. Then 
    $\Psi^{-\epsilon} \subset \Psi \subset \Psi^{\epsilon}$, 
  $\Psi^{\epsilon} \downarrow \bar{\Psi}$ as $\eps \downarrow 0$
  and  $\Psi^{-\epsilon} \uparrow \interior{\Psi}$ as $\eps
  \downarrow 0$. As usual, $z_k$ is the $k$th coordinate of a vector
  $z$, and $A_{mk}$ is the entry of the matrix $A$ in the position
  $(m,k)$. Unless stated otherwise, the letter $C$ will denote a
  finite positive constant that may change from one appearance to to
  another. 

The rest of the paper is organized as follows. 
The main results are stated in Section~\ref{sec:main_result}. Section
\ref{sec:lemmas} contains a number of intermediate results used in the
paper. 
Sections~\ref{sec:prop_1_int_approx} and~\ref{sec:prop_2_prob_equi}
develop two key propositions that serve as the foundation for the
proofs of the main results, the proofs of which are then presented in Sections~\ref{sec:pt_process} and~\ref{sec:cluster_size}. 

\section{Main results}\label{sec:main_result}

To state our main results we introduce some notation. The following
matrix-valued function will play an important role in the sequel: 
\begin{equation}\label{eq:def_coeff_matrix}
    B_y = \big[(1-y)_+^{1+\beta} - (-y)_+^{1+\beta}\big] B_+ 
        - \big[(1-y)_-^{1+\beta} - (-y)_-^{1+\beta}\big] B_- , \ y\in\R.
\end{equation}

We will impose the following assumptions on the process $(X_n)$ and on
the ``failure set'' $\Gamma$ in \eqref{def:rareevent}
\begin{assumption}\label{ass:tail_measure}
$\Gamma \subset \RR^d \setminus
B_{\delta_0}(0)$ for some $\delta_0>0$,  $(pB_++(1-p)B_-)^{-1}\Gamma$ is a
$\nu$-continuity set for $p=0,1$ and almost every $0< p< 1$ (with respect to
the Lebesgue measure) and
$$
\int_\R \nu ( B_y^{-1}\Gamma)\, dy>0. 
$$
\end{assumption}

We will use the following notation for the partial sums of the
coefficients in \eqref{e:sum.repr} and the corresponding sums of the
numbers $(a_j)$: 
\begin{equation} \label{e:coeff.sums}
    H_i^{(n)} = \sum_{j=-i}^{n-i} A_j \in \RR^{d\times d},    \qquad 
    h_i^{(n)} = \sum_{j=-i}^{n-i} a_j, \ h^{(n)}=h_0^{(n)},
\end{equation}
with the convention $a_{-j}=a_j$ for all $j$.

Our first result should be compared to Theorem 2.1 in
\cite{wang:samorodnitsky:2025}. It formalizes the action of the
``catastrophe'' principle by stating that, asymptotically, a single
noise vector is responsible for the event $E_0(n,\Gamma)$, when it
occurs. It describes the ``size'' and the ``location'' of that noise
vector in the language of weak convergence of point processes. We give two related versions of such convergence. Let $M$
be the space of all Radon measures on 
$E=\bbr\times \bigl([-\infty,\infty]^d\setminus \{0\}\bigr)$, equipped with the topology of vague
convergence.  
We use
the common notation $\eps(x, y)$ to denote the Dirac measure (point
mass) at $(x, y)$.



Let $(\lambda_n)$ be a sequence satisfying
\eqref{e:lambda.n}, and consider the following two point processes: 
\begin{equation} \label{e:point.pr}
    N_n^1 = \sum_{i=-\infty}^\infty \epsilon\big(i/n, \, H_i^{(n)} Z_i/\lambda_n\big), \ \ 
     N_n^2 = \sum_{i=-\infty}^\infty \epsilon\big(i/n, \, h^{(n)} Z_i/\lambda_n\big).
\end{equation}
While the point process  $N_n^1$ is the natural candidates to describe the exceptionally large noise value, the point process $N_n^2$ offers extra flexibility for the subsequent analysis.

\begin{theorem}\label{thm:ptprocess_longmem}
Under Assumption \ref{ass:tail_measure},  the 
conditional laws of the point processes $N_n^i, i = 1, 2$ given $E_0(n,\Gamma)$, 
  converge weakly in  the vague topology, as $n\to\infty$,   to the
  laws  of a single atom point process $\epsilon_{(I,X^i)}, i = 1, 2$, with $I$
  real-valued with the density
  $$
  f_I(y) = \frac{\nu (B_y^{-1}\Gamma)}{\int_\R \nu (B_t^{-1}\Gamma)\, dt}, \ y\in\R
  $$
 with respect to the Lebesgue measure, and versions of the
 conditional laws of the $\R^d$-valued $X^i, i = 1, 2$  given  $Y=y$ are 
 $$
 P(X^1\in\cdot | I = y) = \frac{\nu (B_y^{-1} \Gamma \cap B_y^{-1} \; \cdot\;)}{\nu(B_y^{-1}
   \Gamma)}, \  \ P(X^2\in\cdot | I = y) = \frac{\nu (B_y^{-1} \Gamma \cap \; \cdot\;)}{\nu(B_y^{-1}  \Gamma)}. 
 $$
\end{theorem}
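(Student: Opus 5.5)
Since a single atom limit is claimed, I will use the standard single-big-jump strategy. The key scaling fact is that $H_i^{(n)}/h^{(n)} \to B_{y}$ uniformly for $i/n$ in compacts (with $y=i/n$). This follows from \eqref{eq:def_coeff1}, regular variation of $(a_m)$ with $\beta\in(-1,0)$, and Karamata's theorem:
\[
\sum_{j=-i}^{n-i}A_j \approx \sum_{j} a_{|j|}\bigl(B_+\one_{j>0}+B_-\one_{j<0}\bigr)
\]
is asymptotic to $h^{(n)}$ times the difference of the integrals of $t^{\beta}$ over $[-y,1-y]$ split at $0$. Here $h^{(n)}\sim n a_n/(1+\beta)$, and this yields exactly $B_y$ after normalizing; I would check the signs in \eqref{eq:def_coeff_matrix} in this step. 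The same computation also gives a global domination bound
\[
\|H_i^{(n)}\|\le C h^{(n)} \bigl(1+|i|/n\bigr)^{\beta},
\]
which is used for tails of the index range.

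Step 1 is the asymptotics of the rare event probability,
\[
P(E_0(n,\Gamma)) \sim n\,P(\|Z\|>\lambda_n/h^{(n)})\int_\R \nu(B_y^{-1}\Gamma)\,dy.
\]
The upper bound comes from splitting $S_0^n=\sum_i H_i^{(n)}Z_i$ on the event that at most one $\|H_i^{(n)}Z_i\|$ exceeds $\epsilon\lambda_n$. Two large terms cost, by the domination bound and \eqref{e:lambda.n}, a factor of order $(nP(\|Z\|>\lambda_n/h^{(n)}))^2$, which is negligible relative to one. The sum of the truncated terms must be shown to be $o(\lambda_n)$ with probability $1-o(nP(\|Z\|>\lambda_n/h^{(n)}))$. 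For this I would use a Fuk–Nagaev / Prokhorov-type inequality for truncated sums with weights $H_i^{(n)}$, whose variance or $p$-th moment is of order $n^{2(\beta+1)+1}$ (as in Remark \ref{rk:lambda.seq}). Condition \eqref{e:lambda.n} is exactly what makes the bulk negligible on the scale $\lambda_n$ with superpolynomially small probability compared to the one-jump probability. The lower bound and the exact constant come from summing over $i$ the probability that $H_i^{(n)}Z_i\in\lambda_n\Gamma^{\pm\epsilon}$ while the rest is small. Regular variation, the Riemann sum in $i/n$, and the $\nu$-continuity assumption on $(pB_++(1-p)B_-)^{-1}\Gamma$ then give $\int\nu(B_y^{-1}\Gamma^{\pm\epsilon})dy$; letting $\epsilon\to0$ completes the step. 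The continuity assumption is needed because $B_y$ is of that convex-combination form only up to scalars: for $y\in(0,1)$ it mixes $B_+$ and $B_-$ with weights $(1-y)^{1+\beta}$ and $y^{1+\beta}$. The tails $|y|\to\infty$ are controlled by the bound $|y|^{\beta\alpha}$, which is integrable since $\beta\alpha<-1$.

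Step 2 is the point process convergence. Convergence in law of $N_n^i$ given $E_0$ is equivalent to convergence of Laplace functionals for $f\ge0$ continuous with compact support in $E$. On the event, outside the single big jump, all points $H_i^{(n)}Z_i/\lambda_n$ have norm at most $\epsilon$ with conditional probability tending to one, so they do not charge the support of $f$ once $\epsilon$ is small. Hence
\[
E[e^{-N_n(f)}\mid E_0]\approx\sum_i P\bigl(H_i^{(n)}Z_i\in\lambda_n\Gamma,\ \text{rest small}\bigr)\,e^{-f(i/n,\cdot)}/P(E_0),
\]
and the same Riemann-sum and regular-variation argument converts this into
\[
\int\int e^{-f(y,x)}\,\one_{B_yx\in\Gamma}\,\nu(dx)\,dy\Big/\int\nu(B_y^{-1}\Gamma)dy
\]
for $N_n^2$, since $h^{(n)}Z_i/\lambda_n\to x$. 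For $N_n^1$ the same holds with $f(y,B_yx)$. These are the stated limits: by homogeneity, the law of $X^1$ is the pushforward of $X^2$ under $B_y$.

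The main obstacle is the negligibility of the bulk in Step 1 under long memory. The weights $H_i^{(n)}$ are not summable in $i$, so $\sum_{|i|>Kn}$ contributes many small but not individually negligible terms. The truncation level and the inequality must be chosen carefully, treating $\alpha\le2$ and $\alpha>2$ separately. I would try the Fuk–Nagaev inequality with truncation at $\epsilon\lambda_n$ first. A secondary delicate point is justifying the uniform convergence $H_i^{(n)}/h^{(n)}\to B_{i/n}$ near $i/n\in\{0,1\}$, where Potter bounds on $a_m$ are needed.
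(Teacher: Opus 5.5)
Your plan follows essentially the same route as the paper. The steps correspond as follows: the scaling $H_i^{(n)}/h^{(n)}\to B_{i/n}$ with Potter-bound control of $|i|>Mn$ and a Riemann-sum passage to $\int\nu(B_y^{-1}\Gamma)\,dy$ is Proposition~\ref{prop:int_approx}; the single-big-jump asymptotics for $P(E_0(n,\Gamma))$, via negligibility of two big terms and a Prokhorov-type bound for the truncated bulk, is Proposition~\ref{prop:asym_prob_equi} with Lemmas~\ref{l:elem.fact}, \ref{lemma:prob_equi_sumDeviation} and \ref{lemma:WLLN_longmem}; and you then conclude via Laplace functionals as the paper does. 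The differences (Fuk--Nagaev instead of Prokhorov, handling continuous $f$ directly rather than through simple functions) are cosmetic.

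When you write it out, fix three points:
\begin{itemize}
\item Take the truncation level small compared with the deviation threshold. The gain is then a polynomial rate whose power is their ratio, not a superpolynomial one. If you truncate at the same $\epsilon\lambda_n$ and ask the truncated sum to be $o(\lambda_n)$ with probability $1-o\bigl(nP(h^{(n)}\|Z\|>\lambda_n)\bigr)$, this fails: a single truncated term of size comparable to $\epsilon\lambda_n$ already has probability of that order.
\item Put a Potter slack $s>0$ into the exponent of your domination bound.
\item For $N_n^2$, check separately that the other points $h^{(n)}Z_l/\lambda_n$, $|l|\le Mn$, are small. This does not follow from smallness of $H_l^{(n)}Z_l/\lambda_n$; prove it by independence, as the paper does with the events $F_0^{(i,M)}$.
\end{itemize}
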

The reader may consult \cite{resnick:1987}
for more information on weak convergence in the vague topology.  

\begin{remark} \label{rk:jump.loc}
The fact that the limiting point process consists of a single point,
is a manifestation of the ``catastrophe principle'': a single
``large'' value of the noise is most likely responsible for the rare
event $E_0(n,\Gamma)$. This behaviour is also seen in the short memory
case. However, the location $I$ of the ``responsible'' noise value, while also of
order $n$, is no longer asymptotically uniformly distributed, and the
``size'' $X$ of the single large value is no longer asymptotically
independent of its location $I$, in
contrast to what is seen in the short memory case; see Theorem 2.1 in
\cite{wang:samorodnitsky:2025}. 

 It is instructive to look at the limiting distribution of the location
in a special case. Let  $d=1$; in this case the two point processes are, essentially, the same, so we look at $N_n^1$. We take $\Gamma=(1,\infty)$. In the
one-dimensional case the tail measure $\nu$ in
\eqref{eq:def_regular_varying_Rd} has the following density with
respect to the Lebesgue measure on $\R$:
\begin{equation} \label{e:nu.d1}
\frac{d\nu}{d\, {\rm Leb}}(x) = \left\{ \begin{array}{ll}
                                       p\alpha x^{-(1+\alpha)}
                                       &\text{if} \ x>0,\\
                                       q\alpha |x|^{-(1+\alpha)}
                                       &\text{if} \ x<0,
                                     \end{array}
\right.                                     
\end{equation}
for some $p=1-q\in [0,1]$. In this case the limiting location $I$ in
Theorem \ref{thm:ptprocess_longmem} has a density with
respect to the  Lebesgue measure on $\R$ given by
\[
    C\bigl(
    p\one(\phi_{B_+,B_-}(y)>0)+q\one(\phi_{B_+,B_-}(y)<0)\bigr)|\phi_{B_+,B_-}(y)|^\alpha,
    \ C>0,
  \]
with 
 \begin{equation} \label{e:phi.1}
    \phi_{B_+,B_-}(y) = B_+\big[(1-y)_+^{1+\beta} - (-y)_+^{1+\beta}\big] -
    B_-\big[(1-y)_-^{1+\beta} - (-y)_-^{1+\beta}\big], \ y\in\R,
\end{equation}
showing how non-uniform the limiting distribution is. 

Observe that the value  $\beta=-1$ forms  the boundary
  between summability \eqref{e:summ} and lack thereof. Therefore, 
it is natural to interpret this value as separating moving average
processes with short memory from those with long memory.  
As $\beta\downarrow -1$, 
\[
\phi_{B_+,B_-}(y)\to (B_++B_-)\one_{(0,1)}(y), \ y\not= 0,1.
\]
That is, as $\beta$ approaches $-1$, the density of the location $I$
approaches  the uniform density. This is consistent with the
fact established in \cite{wang:samorodnitsky:2025} 
that in the short memory case the corresponding density is, in fact,
uniform.

Interestingly,  In certain circumstances   $Y$ and $X$ may, in fact,
be independent. 
Suppose, for example, that $B_\pm = b_\pm B$ for some 
 real numbers $b_+>0$ and $ b_-\geq 0$ and matrix $B$ with
 $B^{-1}\Gamma$ being a $\nu$-continuity set of positive
 $\nu$-measure. 
Then  $ B_y =  \phi_{b_+,b_-}(y)B, \ y\in\R$. In this case the
function $\phi_{b_1,b_2}$ is nonnegative, so by the scaling
property of the measure $\nu$,
$$
\nu( B_y^{-1} \; \cdot\; )  = \nu(
B^{-1}\bigl(\; \cdot\; /\phi_{b_+,b_-}(y)\bigr) =\phi_{b_+,b_-}(y)^\alpha 
\nu(B^{-1} \; \cdot\; ).
$$
This means that 
the conditional law $\nu (B_y^{-1} \Gamma \cap \; \cdot\; )/\nu (B_y^{-1}
  \Gamma)$ of $X$ given $Y=y$ is independent of $y$ for a.e. $y$, and 
  in this case  $Y$ and $X$ are independent. The same is, of course,
  true if one interchanges the roles of $b_+$ and $b_-$, and of
  positivity and negativity. In particular, in the case $d=1$,
  independence occurs if $B_+$ and $B_-$ are not of opposite signs. In
  any dimension, independence holds for one-sided moving average
  processes.

\end{remark}

Our second result addresses the question of how many of the rare events
$E_j(n, \Gamma)$ happen given that the rare event $E_0(n, \Gamma)$ has
occurred. It is a specificity of the ``catastrophe principle'' that,
given that $E_0(n, \Gamma)$ occurs, an increasingly large number
$E_j(n, \Gamma)$  is likely to occur, both under the short memory
scenario and the long memory scenario. Therefore, in both cases it is
natural to measure the size of the cluster of large deviation events
  by studying the first time these events do not happen. For a
  measurable set $\Psi$ let 
\begin{align} \label{e:Js}
    &J_n^{\Psi,+} = \inf\{ j \geq 0 : \, E_j(n,\Psi)\ \text{does not
      occur}\},  \\
  \notag    &J_n^{\Psi,-} = \sup\{ j < 0 : \, E_j(n,\Psi)\ \text{does not
      occur}\}. 
\end{align}
The next theorem is stated in a greater generality; using it with
$\Psi=\Gamma$ yields a description of the size of the cluster of large
deviations.

\begin{theorem}\label{thm:cluster_size}  
Suppose that both $\Gamma$ and $\Psi$ satisfy
Assumption~\ref{ass:tail_measure}. 
Then for any $\theta_1,\theta_2>0$,
\begin{equation}\label{eq:clusterSize}
\begin{aligned}
    &\lim_{n\to\infty} 
    \PP\big(J_n^{\Psi,+} > n\theta_1,\, J_n^{\Psi,-} < -n\theta_2 \,\big|\, E_0(n,\Gamma)\big) \\
    &\quad= 
    \frac{\int_{-\infty}^\infty  \;
        \nu\!\Big(B_y^{-1} \Gamma \cap \bigcap^*_{-\theta_2\le t \le \theta_1} \!\!  B_{y-t}^{-1}\Psi\Big) dy}
        {\int_{-\infty}^\infty \nu(B_y^{-1} \Gamma) \,dy},
\end{aligned}
\end{equation}
where $\cap^*_{t\in A}$ stands for the intersection over all rational
$t\in A$. 
\end{theorem}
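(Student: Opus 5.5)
The proof rests on the single-big-jump picture behind Theorem~\ref{thm:ptprocess_longmem}, now used simultaneously for all shifted windows. Write
\[
S_j^n=\sum_i H_{i-j}^{(n)}Z_i,
\]
so that the event $\{J_n^{\Psi,+}>n\theta_1,\ J_n^{\Psi,-}<-n\theta_2\}$ is exactly the event $\{S_j^n\in\lambda_n\Psi \text{ for all } -n\theta_2\le j\le n\theta_1\}$. The plan has three steps.

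\emph{Step 1: replace all partial sums by one big jump, uniformly in $j$.} Show that for every $\eps>0$,
\[
P\Bigl(\sup_{|j|\le Kn}\Bigl\|S_j^n-\sum_i H_{i-j}^{(n)}Z_i\one(\|h^{(n)}Z_i\|>\eps\lambda_n)\Bigr\|>\eps\lambda_n\Bigr)=o\bigl(n\,P(\|Z\|>\lambda_n/h^{(n)})\bigr).
\]
Also show that the probability of two such big jumps with $|i|\le Mn$, or of one big jump with $|i|>Mn$ (as $M\to\infty$), is of the same negligible order. Here $nP(\|Z\|>\lambda_n/h^{(n)})$ is the order of $P(E_0(n,\Gamma))$.

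The truncated sums form a process in $j$, not a single sum. Handling them uniformly is the main obstacle. I would first discretize $j$ on a grid of mesh $\delta n$. I would then control the increments $S_{j+1}^n-S_j^n=X_{j+n}-X_j$, summed over stretches of length $\delta n$, using the Marcinkiewicz--Zygmund/Fuk--Nagaev-type moment bounds in Remark~\ref{rk:lambda.seq} together with condition \eqref{e:lambda.n}. The polynomial gap in \eqref{e:lambda.n} leaves room for a union bound over $O(n)$ values of $j$, provided one uses a high moment of the truncated small jumps; that is, one applies a Fuk--Nagaev inequality for the small-jump part rather than the second moment alone.

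\emph{Step 2: asymptotics of the one-jump event.} By \eqref{eq:def_coeff1} and regular variation, $H^{(n)}_{\lfloor yn\rfloor -j}/h^{(n)}\to B_{y-t}$ uniformly for $y$ and $t=j/n$ in compacts, where the limit is taken in the ratio sense with $a_m$ regularly varying. On the event that there is one big jump at $i=\lfloor yn\rfloor$, with $W=h^{(n)}Z_i/\lambda_n$, the target event becomes approximately
\[
\{B_{y-t}W\in\Psi \text{ for all } t\in[-\theta_2,\theta_1]\}\cap\{B_yW\in\Gamma\}.
\]
Summing over $i$ as a Riemann sum, using \eqref{eq:def_regular_varying_Rd}, gives the numerator
\[
n\,P(\|Z\|>\lambda_n/h^{(n)})\int \nu\Bigl(B_y^{-1}\Gamma\cap\bigcap_t B_{y-t}^{-1}\Psi\Bigr)\,dy .
\]
Dividing by the denominator from Theorem~\ref{thm:ptprocess_longmem} yields \eqref{eq:clusterSize}.

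To pass from finitely many grid values of $t$ to all rational $t$, I would use the following facts. The map $t\mapsto B_{y-t}w$ is continuous. The events are monotone in the grid refinement, and the rational intersection over $[-\theta_2,\theta_1]$ is the decreasing limit of finite ones. The limit is then taken by monotone convergence for $\nu$.

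\emph{Step 3: boundary and perturbation control.} This step handles the $\eps$-perturbations, the fact that $\Psi$ and $\Gamma$ are not open, and the lattice versus continuum issues. I would bound the target event above, using $\Psi^{\eps}$ and $\Gamma^{\eps}$, and below, using $\Psi^{-\eps}$ and $\Gamma^{-\eps}$. Then let $\eps\downarrow0$, using the $\nu$-continuity in Assumption~\ref{ass:tail_measure} for a.e.\ $y$. Note that $B_y$ is of the form $c_1B_++c_2B_-$, and each such matrix is a positive multiple of $pB_++(1-p)B_-$, so scaling of $\nu$ transfers the continuity assumption. Dominated convergence in $y$ then finishes the argument. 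The required integrability comes from $\|B_y\|\le C|y|^{\beta}$ for large $|y|$ with $\alpha\beta<-1$, which follows from \eqref{e:beta.tange}.
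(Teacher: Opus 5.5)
\textbf{Gap in Step~1.} Your overall architecture is sound: one dominant noise variable, the uniform approximation $H^{(n)}_{\lfloor yn\rfloor-j}/h^{(n)}\to B_{y-t}$, $\pm$-neighbourhood sandwiches, and the transfer of Assumption~\ref{ass:tail_measure} via $B_y=c(y)\bigl(p(y)B_++(1-p(y))B_-\bigr)$ with $c(y)>0$. But Step~1 is false as you state it, for two separate reasons.

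First, the indicator $\one(\|h^{(n)}Z_i\|>\eps\lambda_n)$ ignores the coefficients. These events are therefore i.i.d.\ in $i$ with a fixed positive probability. So almost surely infinitely many indices are ``big'', and a big jump with $|i|>Mn$ has probability one for every $M$. The split of $S_j^n$ into a big-jump series and a small-jump series is not even well defined in general: under long memory $\sum_i\|H^{(n)}_{i-j}\|=\infty$, so the two non-centred pieces need not converge separately.

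Second, using the same $\eps$ as truncation level and deviation level fails even after this is fixed, because $\sup_y\|B_y\|$ can exceed $1$. For $d=1$, $B_+=B_-=1$ one has $B_{1/2}=2^{-\beta}>1$. A single ``small'' $Z_i$ with $h^{(n)}|Z_i|$ just below $\eps\lambda_n$ then gives $|H^{(n)}_{i-j}Z_i|>\eps\lambda_n$ for $j\approx i-n/2$. This happens with probability of order $nP(\|Z\|>\lambda_n/h^{(n)})$, the same order as $P(E_0(n,\Gamma))$, not smaller.

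The repair is to classify a jump by its actual contribution, e.g.\ $\sup_{|j|\le Kn}\|H^{(n)}_{i-j}Z_i\|>\tau\lambda_n$, with far indices handled as in Lemma~\ref{lemma:intapprox_negligible}. Take $\tau$ much smaller than the deviation level $\eps$. Then two such jumps cost $O\bigl((nP)^2\bigr)$. On the event of at most one, the remainder is a truncated sum, and the Prokhorov bound \eqref{eq:WLLN_bdd} applies with exponent of order $\eps/\tau$. Thanks to the polynomial gap in \eqref{e:lambda.n}, this absorbs the union bound over $O(n)$ values of $j$ and still leaves $o\bigl(P(E_0(n,\Gamma))\bigr)$.

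\textbf{Comparison with the paper.} Once repaired, yours is a genuinely different route.

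\emph{Remainder control.} In proving Theorem~\ref{thm:cluster_size}, the paper never needs a rate for the remainder uniformly in $j$. In the upper bound it restricts the bad event $B_{n,M}$ to exactly one index $i$ with $\|H^{(n)}_iZ_i\|>\lambda_n\eps$. It then uses independence of $Z_i$ from $(Z_m)_{m\ne i}$ to factor out $P(\|H_i^{(n)}Z_i\|>\lambda_n\eps)$, the device already used in Lemma~\ref{lemma:prob_equi_sumDeviation}. What remains is \eqref{e:last.upper}, which only asks that $\max_{|j|\le Kn}\|S_j^n\|/\lambda_n\to0$ in probability. So the uniform-in-$j$ control you call the main obstacle is needed only in this weak form. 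Your unconditional $o(P(E_0))$ estimate is stronger, and you pay for it with the exponential inequality and the union bound.

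\emph{Computing the limit.} The paper does not redo a Riemann sum. Lemma~\ref{lemma:cluster_equi_pt} discretizes $i$ and $j$ in blocks of length $n\eps$. It compares $P(\cE_n^M(\Psi^{\pm\delta})\mid E_0(n,\Gamma))$ with $P\bigl(N_n^2(D^{M^\prime,\pm\delta}(\theta_1^\prime,\theta_2^\prime))\ge1\mid E_0(n,\Gamma)\bigr)$, which Theorem~\ref{thm:ptprocess_longmem} evaluates on closures and interiors. Your direct computation amounts to re-proving Proposition~\ref{prop:int_approx} for these sets. It is self-contained, and your finite-grid idea actually makes the upper bound easier: finitely many $j$ suffice there, so no uniformity in $j$ is needed. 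The point-process route instead packages the discretization once for both bounds.

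\textbf{A caveat shared with the paper.} In the lower bound, letting $\eps\downarrow0$ in $\bigcap_tB^{-1}_{y-t}\Psi^{-\eps}$ yields $\bigcap B^{-1}_{y-t}\Psi^{\circ}$ over all real $t\in[-\theta_2,\theta_1]$. Countably many continuity conditions do not make its $\nu$-measure equal to that of the rational intersection in \eqref{eq:clusterSize}. For $d=1$, $B_\pm=1$, $\Psi=(1,\infty)\setminus\{2\}$, the two differ on a set of positive $\nu$-measure. So monotone convergence alone does not close that bound.
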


\begin{remark}
We interpret the inverses $B_\cdot^{-1}(\cdot)$ 
in \eqref{eq:clusterSize} as set inverses. Furthermore, the
matrix-valued function in \eqref{eq:def_coeff_matrix} satisfies
$$
\|B_y\|\leq C \Bigl(\big|(1-y)_+^{1+\beta} - (-y)_+^{1+\beta}\big|
+\big|(1-y)_-^{1+\beta} - (-y)_-^{1+\beta}\big|\Bigr)
$$
which along with the bounds on $\beta$ given in \eqref{e:beta.tange},
assures finiteness of the integral in the denominator. The situation
is similar with the integral in the numerator. 

Once again, it is instructive to look at the special case $d=1$. Let
us take now $\Gamma=\Psi = (1,\infty)$. The tail measure is still
given by \eqref{e:nu.d1}. Note that, if $B_+$ and $B_-$ are of the
same sign, then the function $\phi=\phi_{B_+,B_-}$ in \eqref{e:phi.1} is of a
constant sign, while if $B_+$ and $B_-$ are of different signs,  then
the function $\phi$ changes its sign once. Therefore, if
$\phi(y-\theta_1)$ and $\phi(y+\theta_2)$ are of the same sign, then
for all $-\theta_2\leq t\leq \theta_1$, $\phi(y-t)$ is of the same
sign as $\phi(y)$. Denote
$$
{\mathcal C}_{\theta_1,\theta_2}=\{ y\in\R:\, \phi(y-\theta_1) \
\text{and} \ \phi(y+\theta_2) \ \text{are of the same sign}\}
$$
(${\mathcal C}_{\theta_1,\theta_2}=\R$ if $B_+$ and $B_-$ are of the
same sign.) Then the limit in \eqref{eq:clusterSize} simplifies to 
\[ 
\frac{\int_{{\mathcal C}_{\theta_1,\theta_2}}\bigl(
  p\one(\phi_{B_+,B_-}(y)>0)+q\one(\phi_{B_+,B_-}(y)<0)\bigr) \bigl(
  |\phi_{B_+,B_-}(y-\theta_1)|^\alpha \wedge |\phi_{B_+,B_-}(y+\theta_2)|^\alpha\bigr)
   dy}
{\int_\R \bigl(
  p\one(\phi(y)>0)+q\one(\phi(y)<y)\bigr)|\phi(y)|^\alpha dy}. 
       \]


\end{remark}

\section{Integrals appearing in the
  limit}\label{sec:prop_1_int_approx}

The catastrophe principle essentially reduces the probability of the 
rare event \( \{S_0^{n}\in \lambda_n\Gamma\} \) to the sum of the
``large jump'' probabilities 
\[
\sum_{i\in\ZZ} P \big(H_i^{(n)}Z_i \in \lambda_n \Gamma\big),
\]
with the weights \(H_i^{(n)}\) given by \eqref{e:coeff.sums}. We will
spell out this reduction in the sequel. In this section we concentrate
on the sum above and certain more general sums, 
and show that, after a proper normalization, certain
integrals appear in the limit. These integrals also show up in the
main theorems of Section \ref{sec:main_result}.

We consider measurable functions $g:\, \R\times \R^d\to [0,\infty)$ of
the form 
\begin{equation} \label{e:simple.g}
g(y,x)= \sum_{l=1}^L \one_{[a_{l-1},a_l)}(y) \sum_{j=1}^{m_l} b_{lj}
\one_{A_{lj}}(x), \ y\in\R, \, x\in\R^d,
\end{equation} 
where $-\infty<a_0<a_1<\cdots <a_L<\infty, \, L=1,2,\ldots$,
$m_1,\ldots, m_L\in\bbz$, $(b_{lj})$ are nonnegative numbers and, for
each $l$, 
$(A_{lj}, \, j=1,\ldots, m_l)$ is a measurable partition of $\R^d$. The next proposition
is the main result of this section.

\begin{proposition} \label{prop:int_approx} 
 Suppose Assumption
  \ref{ass:tail_measure} holds and $(\lambda_n)$ satisfies
  \eqref{e:lambda.n}.

 (i)   If the set
  $(pB_++(1-p)B_-)^{-1} A_{lj}$ is a 
  $\nu$-continuity set for every $(l,j)$, $p=0,1$ and a.e. $0<p< 1$, then
   \begin{align}\label{eq:limit_sum_gen}
&\lim_{n\to\infty} \frac{\sum_{i = -\infty}^\infty E\bigl( e^{-g(i/n,
    H_i^{(n)} Z_i/\lambda_n)} \one_{\Gamma}(H_i^{(n)} Z_i/\lambda_n)\bigr) }
{n P(h^{(n)}\|Z\| > \lambda_n)}\\
\notag =&\frac{1}{\nu(\{z: \|z\|> 1\})}  \int_{-\infty}^\infty \left( 
\int_{B_y^{-1}\Gamma} e^{-g(y,B_yx)} \nu (dx)\right) \ dy.
   \end{align}

 (ii)     If the set $A_{lj}$ is a $\nu$-continuity set, then
 \begin{align}\label{eq:limit_sum_gen1}
&\lim_{n\to\infty} \frac{\sum_{i = -\infty}^\infty E\bigl( e^{-g(i/n,
    h^{(n)} Z_i/\lambda_n)} \one_{\Gamma}(H_i^{(n)} Z_i/\lambda_n)\bigr) }
{n P(h^{(n)}\|Z\| > \lambda_n)}\\
\notag =&\frac{1}{\nu(\{z: \|z\|> 1\})}  \int_{-\infty}^\infty \left( 
\int_{B_y^{-1}\Gamma} e^{-g(y,x)} \nu (dx)\right) \ dy.
 \end{align}

 In particular,
 \begin{equation}\label{eq:limit_sum_tot}
\lim_{n\to\infty} \frac{\sum_{i = -\infty}^\infty 
    P (H_i^{(n)} Z_i\in\lambda_n\Gamma)}
{n P(h^{(n)}\|Z\| > \lambda_n)}
=\frac{1}{\nu(\{z: \|z\|> 1\})}  \int_{-\infty}^\infty \nu(B_y^{-1}\Gamma)  dy.
\end{equation}
 \end{proposition}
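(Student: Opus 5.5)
The plan is to treat the sum over $i$ as a Riemann sum in $y=i/n$. The inner expectation at each $i$ will be approximated through regular variation of $Z$, after rescaling by $h^{(n)}$.

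\textbf{Step 1: limiting weights.} Since $a$ is regularly varying with index $\beta\in(-1,0)$, Karamata's theorem gives $h^{(n)}\sim n a_n/(1+\beta)$. I will show that, uniformly for $y=i/n$ in compacts avoiding $\{0,1\}$,
$$
H_{i}^{(n)}/h^{(n)}\to B_{y}.
$$
The argument writes $H_i^{(n)}=\sum_{j=-i}^{n-i}A_j$. For $i<0$ all indices are positive and large, so \eqref{eq:def_coeff1} gives $H_i^{(n)}\approx B_+\sum_{j=-i}^{n-i}a_j$, and the uniform convergence theorem for regularly varying functions turns the ratio into $(1-y)^{1+\beta}-(-y)^{1+\beta}$. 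The cases $0\le i\le n$ and $i>n$ are analogous, with the $B_-$ part entering with the stated sign convention. The finitely many indices near $0$ are negligible after division by $h^{(n)}$ because $h^{(n)}\to\infty$. I will also need a global bound $\|H_i^{(n)}\|\le C h^{(n)}\psi(i/n)$ with $\psi(y)\sim c|y|^{\beta}$ as $|y|\to\infty$ (from the difference of partial sums), and $\psi$ bounded near $[0,1]$.

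\textbf{Step 2: pointwise limit.} Put $u_n=\lambda_n/h^{(n)}$. Condition \eqref{e:lambda.n} together with $h^{(n)}=n^{1+\beta+o(1)}$ gives $u_n\to\infty$. For fixed $y$ with $i/n\to y$, write $H_i^{(n)}Z_i/\lambda_n=(H_i^{(n)}/h^{(n)})(Z_i/u_n)$. By regular variation and the continuous mapping theorem for vague convergence,
$$
\frac{E\bigl[e^{-g(i/n,H_i^{(n)}Z_i/\lambda_n)}\one_\Gamma(\cdot)\bigr]}{P(\|Z\|>u_n)}\to \frac{1}{\nu(\|z\|>1)}\int_{B_y^{-1}\Gamma}e^{-g(y,B_yx)}\nu(dx).
$$
Here $g$ is simple and $y$ avoids the finitely many breakpoints $a_l$, so $g(y,\cdot)$ is a finite combination of indicators. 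Applying the convergence needs the preimages of the $A_{lj}$ and of $\Gamma$ under the limiting matrix $B_y$ to be $\nu$-continuity sets, and to stay continuity sets under small perturbations of the matrix. I would like to use the hypothesis on $(pB_++(1-p)B_-)^{-1}A_{lj}$ for this, but I have not checked how it applies at a general $y$. For $y<0$ and $y>1$, $B_y$ is a scalar multiple of $B_+$ or of $B_-$, so by the scaling of $\nu$ it reduces to the $p=0,1$ cases. For $0<y<1$, however, $B_y=(1-y)^{1+\beta}B_++y^{1+\beta}B_-$. This is a positive multiple of $pB_++(1-p)B_-$ only if both coefficients are positive, which would require $B_-$ to enter with a plus sign there. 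Under the sign convention in \eqref{eq:def_coeff_matrix} I have not verified that, so reconciling the sign is an open step. If it can be done, the a.e.-$p$ hypothesis becomes an a.e.-$y$ statement, which is all the integral needs. For part (ii) the argument is identical with $g(y,x)$ in place of $g(y,B_yx)$, using continuity of $A_{lj}$ directly.

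\textbf{Step 3: Riemann sum and domination (main obstacle).} The normalization satisfies $nP(h^{(n)}\|Z\|>\lambda_n)=nP(\|Z\|>u_n)$, so the normalized sum is $\int f_n(y)\,dy$ with $f_n$ piecewise constant, and I want dominated convergence. Since $\Gamma$ is bounded away from $0$ by $\delta_0$, the summand is at most
$$
P\bigl(\|Z\|>\delta_0u_n/(C\psi(i/n))\bigr).
$$
Potter bounds give, uniformly in $i$,
$$
\frac{P\bigl(\|Z\|>\delta_0u_n/(C\psi)\bigr)}{P(\|Z\|>u_n)}\le C\max(\psi^{\alpha-\epsilon},\psi^{\alpha+\epsilon}),
$$
valid while $u_n/\psi\ge1$. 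Since $\psi(y)\approx|y|^\beta$ and $\alpha\beta<-1$ by \eqref{e:beta.tange}, choosing $\epsilon$ small makes $|y|^{\beta(\alpha-\epsilon)}$ integrable at infinity, and the bound is bounded near $[0,1]$. This gives an integrable dominating function. The delicate region is very large $|i|$, where the Potter bounds fail; but there $\|H_i^{(n)}\|\to0$ and the total contribution is crude-bounded by Markov's inequality together with $\sum_i\|H_i^{(n)}\|^{\alpha-\epsilon}$. The same Potter argument shows the tail contributions $|y|>K$ vanish as $K\to\infty$. Dominated convergence then yields (i) and (ii), and \eqref{eq:limit_sum_tot} is the case $g\equiv0$.
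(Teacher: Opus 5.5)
Your argument is correct and follows a different route from the paper's. The step you left open closes at once.

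\textbf{Closing the open step.} With $x_-=\max(-x,0)$, for $0<y<1$ one has $(1-y)_-=0$ and $(-y)_-=y$. So the bracket multiplying $B_-$ in \eqref{eq:def_coeff_matrix} equals $-y^{1+\beta}$, and after the outer minus sign $B_y=(1-y)^{1+\beta}B_++y^{1+\beta}B_-$. This is exactly the formula you wrote, and both coefficients are positive. Hence $B_y=c(y)\bigl(p(y)B_++(1-p(y))B_-\bigr)$, where $c(y)=(1-y)^{1+\beta}+y^{1+\beta}$ and $p(y)=\bigl(1+(y/(1-y))^{1+\beta}\bigr)^{-1}$.
\begin{itemize}
\item The map $p$ is a strictly decreasing $C^1$ bijection of $(0,1)$ with nonvanishing derivative. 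So the Lebesgue-null set of exceptional $p$'s pulls back to a null set of $y$'s.
\item Positive scalings preserve $\nu$-continuity, because $\partial(cS)=c\,\partial S$ and $\nu(c\,\cdot)=c^{-\alpha}\nu(\cdot)$.
\item For $y<0$ and $y>1$ the coefficients $(1-y)^{1+\beta}-(-y)^{1+\beta}$ and $y^{1+\beta}-(y-1)^{1+\beta}$ are positive, which gives the cases $p=1$ and $p=0$.
\end{itemize}
You do not need continuity sets to be stable under perturbation of the matrix. The extended continuous mapping theorem, applied with $H^{(n)}_{\lfloor ny\rfloor}/h^{(n)}\to B_y$, only requires $\nu\bigl(B_y^{-1}(\partial\Gamma\cup\bigcup_j\partial A_{lj})\bigr)=0$. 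This is the hypothesis when $B_y$ is invertible, and it is the same fact the paper uses implicitly when it lets $\delta\to0$.

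\textbf{Two corrections in Step 3.}
\begin{itemize}
\item The envelope that is uniform in $n$ is $\psi(y)\le C|y|^{\beta+\epsilon'}$ for $|y|\ge 1$, not $\psi\sim c|y|^{\beta}$. It comes from Potter bounds for $(a_n)$ and a monotone equivalent, as in Lemma~\ref{lemma:intapprox_negligible}, and it still makes $\psi^{\alpha-\epsilon}$ integrable.
\item Drop the Markov fallback. Since $\|H_i^{(n)}\|\le Ch^{(n)}$ for all $i$, the tail of $\|Z\|$ is always evaluated at a level $\ge c\,u_n\to\infty$. So Potter's bound holds for every $i$ simultaneously, and there is no region where it fails. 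A Markov bound with $\alpha-\epsilon$ moments would instead lose a factor $u_n^{\epsilon+o(1)}$ relative to $P(\|Z\|>u_n)$. That factor can grow faster than any power of $n$, because \eqref{e:lambda.n} only bounds $\lambda_n$ from below, so Markov alone cannot make even the range $|i|>Kn$ negligible.
\end{itemize}

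\textbf{Comparison with the paper.} The paper sandwiches the sum in three stages.
\begin{itemize}
\item Lemma~\ref{l:bounds} groups indices into blocks of length $n\epsilon$ and freezes $H_i^{(n)}$ at $H^{(n)}_{[kn\epsilon]}$. The price is fattening or shrinking $\Gamma$ and $A_{lj}$ by $\delta$.
\item Lemma~\ref{lemma:asym_lim_meas} evaluates the frozen terms.
\item It then lets $\epsilon\to0$ via Fatou, $\delta\to0$ via continuity, and $M\to\infty$ via Lemma~\ref{lemma:intapprox_negligible}.
\end{itemize}
You instead write the normalized sum as $\int f_n(y)\,dy$ and apply dominated convergence once. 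Your pointwise limit along $i=\lfloor ny\rfloor$ absorbs both the discretization and the matrix limit. Your Potter majorant replaces the separate truncation lemma.

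This is shorter and avoids the three-parameter limit. The paper's version gives directly the interior and closure bounds of Remark~\ref{rk:prop1.gen} for sets without continuity assumptions, and its block construction is reused in Lemma~\ref{lemma:cluster_equi_pt}. Your majorant yields the same one-sided bounds through Fatou's lemma and its reverse, so nothing essential is lost.
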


Since the argument is similar for the two parts of the proposition,  we prove both parts at the same time. For this pupose we will use the notation 
\begin{equation}\label{eq:unify_coeff}
   G_i^{(n), 1} = H_i^{(n)}, \, G_i^{(n), 2} = h^{(n)}I, \, C_y^{1} = B_y, \, C_y^2 = I. 
\end{equation}
In the proof, we will omit the superscript $j$ and write
$G_i^{(n)}, C_y$ if a particular statement  holds for both cases, and
only add the superscript $j$ back when necessary. 

Denote 
\begin{equation}\label{def:a_y}
    a_+(y) =  (1 - y)_+^{1+\beta} - (-y)_+^{1+\beta}, \, a_-(y) = (1 -
    y)_-^{1+\beta} - (-y)_-^{1+\beta}, 
  \end{equation}
so that $B_y = a_+(y)B_+ - a_-(y) B_-$. It follows 
from Assumption \ref{ass:tail_measure}  that the sets
$C_y^{-1}\Gamma$, $C_y^{-1}A_{lj}$ 
are 
$\nu$-continuity sets for all $(l,j)$ and a.e. $y$.  We also note that
the matrix-valued  function
$y \mapsto C_y$ is continuous. 

We start with several lemmas. We replace for now the set $\Gamma$ by
any measurable set $\Psi 
\subset \RR^d \setminus B_{\delta_0}(0)$, and we  remove for now any 
continuity assumptions on the sets $(A_{lj})$. 
The following lemma allows us to group discrete terms in a sum into
groups of length $n\eps$ groups,  taking the first step of
approximating the sum by an integral.

\begin{lemma} \label{l:bounds}
For every $M> \max(|a_0|, |a_L|)$, 
$0<\eps<\min_{l=0,\ldots, L+1}(a_l-a_{l-1})$ and $0<\delta<\delta_0$, 
\begin{align} \label{e:block.upper}
  &\limsup_{n\to \infty} \, \bigl(n P(h^{(n)} \|Z\| > \lambda_
    n)\bigr)^{-1}\left[ \sum_{|i| 
          \leq Mn}E\bigl( e^{-g(i/n,    G_i^{(n)}Z_i/\lambda_n)}
  \one_{\Psi}(H_i^{(n)} Z_i/\lambda_n)\bigr) \right.\\
 \notag &\left. \hskip 0.05in - n\eps \sum_{l=0}^{L+1} \sum_{a_{l-1}/\eps\leq
                 k<a_l/\eps} \sum_{j=1}^{m_l} e^{-b_{lj}}P\bigl(
                 H_{[kn\eps]}^{(n)}Z_i/\lambda_n \in  \Psi^{\delta},  G_{[kn\eps]}^{(n)} Z_i/\lambda_n \in A_{lj}^\delta \bigr)\right] 
\leq f(\eps)  
  \end{align}
  and
\begin{align} \label{e:block.lower}
  &\liminf_{n\to \infty}  \, \bigl(n P(h^{(n)} \|Z\| > \lambda_
    n)\bigr)^{-1}\left[ \sum_{|i| 
          \leq Mn}E\bigl( e^{-g(i/n,    G_i^{(n)}Z_i/\lambda_n)}
  \one_{\Psi}(H_i^{(n)} Z_i/\lambda_n)\bigr) \right.\\
 \notag &\left. \hskip 0.05in - n\eps \sum_{l=1}^L \sum_{a_{l-1}/\eps\leq
                 k<a_l/\eps} \sum_{j=1}^{m_l} e^{-b_{lj}}P\bigl(
                 H_{[kn\eps]}^{(n)}Z_i/\lambda_n \in \Psi^{-\delta}, G_{[kn\eps]}^{(n)} Z_i /\lambda_n \in A_{lj}^{-\delta} \bigr)\right] 
\geq -f(\eps)  
  \end{align}
  for a nonnegative function $f$ with $f(\eps)\to 0$ as $\eps\to
  0$. Here $a_{-1}=-M, \, a_{L+1}=M, \, m_{-1}=m_{L+1}=1$ and
  $b_{-1,1}=b_{L+1,1}=0$. 
  \end{lemma}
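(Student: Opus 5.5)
Since $g$ is constant in $y$ on each interval $[a_{l-1},a_l)$ and takes only the values $b_{lj}$ there, the plan is to approximate the sum over $|i|\le Mn$ block by block. The range $[-Mn,Mn]$ is split into consecutive blocks $\{i:\ kn\eps\le i<(k+1)n\eps\}$ of length about $n\eps$. Because $\eps$ is smaller than every gap $a_l-a_{l-1}$, each block is contained in at most two of the intervals $[a_{l-1}n,a_ln)$ (with the conventions $a_{-1}=-M$, $a_{L+1}=M$). Only a bounded number, at most $2(L+2)$, of blocks straddle an endpoint. Each straddling block contributes at most $n\eps\cdot\sup_i P(H_i^{(n)}Z_i\in\lambda_n\Psi)$, and this is $\le Cn\eps P(h^{(n)}\|Z\|>\lambda_n)$ by the bound in the next paragraph, so these blocks contribute $O(\eps)$ after normalization. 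On a block lying inside $[a_{l-1}n,a_ln)$, the $i$-th summand equals $\sum_j e^{-b_{lj}}P\bigl(H_i^{(n)}Z/\lambda_n\in\Psi,\ G_i^{(n)}Z/\lambda_n\in A_{lj}\bigr)$, by the partition property and since the $Z_i$ are identically distributed.

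The key step is to compare $H_i^{(n)}$ with $H^{(n)}_{[kn\eps]}$ inside a block. By \eqref{eq:def_coeff1} and Karamata-type arguments for regularly varying sequences, uniformly in $|i|\le Mn$ we have $H_i^{(n)}=h^{(n)}\bigl(B_{i/n}+o(1)\bigr)$ with $h^{(n)}\sim c\,n a_n$. The function $y\mapsto B_y$ is uniformly continuous on $[-M,M]$ with a modulus $w$, so $\|H_i^{(n)}-H_{[kn\eps]}^{(n)}\|\le h^{(n)}\bigl(w(\eps)+o(1)\bigr)$ within a block (for the second case $G^{(n),2}$ does not depend on $i$). The summand is then controlled by
\[
P\bigl(H^{(n)}_{[kn\eps]}Z/\lambda_n\in\Psi^{\delta},\ G^{(n)}_{[kn\eps]}Z/\lambda_n\in A_{lj}^{\delta}\bigr)
+P\bigl(h^{(n)}\|Z\|(w(\eps)+o(1))>\lambda_n\delta\bigr).
\]
Here I use that on $\Psi$ we have $\|H_i^{(n)}Z\|\ge\lambda_n\delta_0$, and hence $\|Z\|\gtrsim\lambda_n/h^{(n)}$. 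The correction term is at most $C(w(\eps)/\delta)^{\alpha}P(h^{(n)}\|Z\|>\lambda_n)$ by regular variation. The lower bound is symmetric: $Z$ in the inner neighborhoods forces $H_i^{(n)}Z\in\lambda_n\Psi$, again up to the same error term. Summing over the $O(M/\eps)$ blocks, each of size $n\eps$, gives a total error of $C M(w(\eps)/\delta)^\alpha$.

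The obstacle here is that the error must be a function of $\eps$ alone, with $\delta$ fixed. I would address this by choosing the parametrization so that $f(\eps)=C_{M,\delta}\,\bigl(w(\eps)^\alpha+\eps\bigr)$, which tends to $0$ as $\eps\to0$ for fixed $M,\delta$; this is consistent with the statement, which fixes $M$ and $\delta$ first. I also need $\lambda_n/h^{(n)}\to\infty$, so that regular variation applies uniformly: by \eqref{e:lambda.n}, $\lambda_n\gg n^{1+\beta+\max(1/2,1/\alpha)}\gg h^{(n)}$. Finally, the normalization $P(h^{(n)}\|Z\|>\lambda_n)$ is used throughout to compare $P(c\,h^{(n)}\|Z\|>\lambda_n)\sim c^{\alpha}P(h^{(n)}\|Z\|>\lambda_n)$, uniformly for $c$ in compacts.
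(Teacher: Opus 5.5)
Your proposal is correct and follows essentially the same route as the paper. It splits the index range into blocks of length $n\eps$ and bounds the indices near the breakpoints $a_l n$ crudely by $O(\eps)$, via $\|H_i^{(n)}\|\le Ch^{(n)}$. Inside each block it replaces $H_i^{(n)},G_i^{(n)}$ by $H^{(n)}_{[kn\eps]},G^{(n)}_{[kn\eps]}$ at the cost of $\pm\delta$-neighborhoods plus an error of order $(\text{modulus}(\eps)/\delta)^\alpha$ from regular variation, so $f$ depends on $\delta$ and $M$ just as the paper's does. The only cosmetic difference is how you control $\|H_i^{(n)}-H^{(n)}_{[kn\eps]}\|$: the paper bounds it directly by the window sums $\tilde A_k^{(n)}$ of $\|A_j\|$ using Karamata's theorem. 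You instead use the uniform approximation $H_i^{(n)}=h^{(n)}\bigl(B_{i/n}+o(1)\bigr)$ on $|i|\le Mn$ together with the modulus of continuity of $y\mapsto B_y$, which works equally well.
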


\begin{proof}
 Clearly, for every $l=1,\ldots, L$,   any $i\in [a_{l-1} n,a_l n)$ 
and any $k$ 
  \begin{align*}
   &E\bigl( e^{-g(i/n,    G_i^{(n)}/\lambda_n)}
  \one_{\Psi}(H_i^{(n)} Z_i/\lambda_n)\bigr) 
 \leq  \sum_{j=1}^{m_l} e^{-b_{lj}} P\bigl(H_{[kn\eps]}^{(n)} Z_i/\lambda_n \in \Psi^{\delta}, G_{[kn\eps]}^{(n)} Z_i/\lambda_n \in A_{lj}^{\delta} \bigr) \\
 &  +  \sum_{j=1}^{m_l} e^{-b_{lj}} P(\|(H_i^{(n)} - H_{[kn\eps]}^{(n)} )Z_i\| >
    \lambda_n \delta)+  \sum_{j=1}^{m_l} e^{-b_{lj}} P(\|(G_i^{(n)} - G_{[kn\eps]}^{(n)} )Z_i\| >
    \lambda_n \delta). 
  \end{align*}
We complete the argument for $G_i^{(n)}= G_i^{(n),1}$,
the second case is even simpler.  We observe that for every $k$ and every
$i\in [ kn\eps,  (k+1) n \eps]$, 
$$\|H_i^{(n)} - H_{[kn\eps]}^{(n)}\|\leq \sum_{j\in [-(k+1)n\eps, -kn\eps]\cup
  [(1-(k+1)\eps)n, (1-k\eps)n]} \|A_j\|\coloneqq \tilde{A}_k^{(n)}, 
$$
say. By the Karamata theorem, for every fixed $k$,
\begin{equation}\label{eq:coeff_group_err}
\begin{aligned}
\limsup_{n\to\infty} \|\tilde{A}_k^{(n)}/h^{(n)}\| \leq&  \big|(|k| \eps)^{ 1 + \beta}  -
(|k+1|\eps)^{1 + \beta} \big| \\
+& \big||1 - k\eps|^{1 + \beta} - |1 - (k+1)
\eps|^{ 1 + \beta}\big|\max ( \|B _+\|,\|B_-\|), 
\end{aligned}
\end{equation}
 and the supremum of the right-hand side over $k\in\bbz$ (say,
 $f_1(\eps)$), 
 vanishes as $\eps\to 0$.
 Each $i$ with $(a_{l-1}+\eps)n\leq i<a_ln$ for some
$l=0,\ldots, L, L+1$ belongs to an interval $[ kn\eps,  (k+1) n \eps)$
for some $a_{l-1}/\eps\leq k<a_l/\eps$.  Therefore, 
\begin{align*}
&\limsup_{n\to \infty} \, \bigl(n P(h^{(n)} \|Z\| > \lambda_
    n)\bigr)^{-1}\left[ \sum_{|i| 
          \leq Mn}E\bigl( e^{-g(i/n,    G_i^{(n)}Z_i/\lambda_n)}
  \one_{\Psi}(H_i^{(n)} Z_i/\lambda_n)\bigr) \right.\\
 \notag &\left. \hskip 0.4in - n\eps \sum_{l=0}^{L+1} \sum_{a_{l-1}/\eps\leq
                 k<a_l/\eps} \sum_{j=1}^{m_l} e^{-b_{lj}}P\bigl(
                 H_{[kn\eps]}^{(n)}Z_i/\lambda_n \in \Psi^{\delta}, G_{[kn\eps]}^{(n)}Z_i/\lambda_n \in A_{lj}^{\delta}\bigr)\right] \\
\leq&  \limsup_{n\to\infty} \frac{n\eps\sum_{l=0}^{L} m_l \, P\bigl( \max_{|i|\leq Mn} \|
      H_i^{(n)}Z\|>\lambda_n\delta_0\bigr) + 4M n P(f_1(\eps)
           2h^{(n)}\|Z\| > \lambda_n
           \delta)}{n P(h^{(n)} \|Z\| > \lambda_n)}   \\
\leq& \lim_{n\to\infty} \frac{\eps\sum_{l=0}^{L} m_l \, P\bigl(
       4(\|B_+\| + \|B_-\|) h^{(n)}\| 
      Z\|>\lambda_n\delta_0\bigr) + 4M P(f_1(\eps)
           2h^{(n)}\|Z\| > \lambda_n
           \delta)}{ P(h^{(n)} \|Z\| > \lambda_
    n)}         \\
    =&\eps\sum_{l=0}^{L} m_l C \, \delta_0^{-\alpha}+  
\bigl(4f_1(\eps)/\delta\bigr)^{\alpha} =: f(\eps). 
\end{align*}
  This proves
\eqref{e:block.upper}, and \eqref{e:block.lower} can be proved in the
same way. 
\end{proof}

In the next lemma the limiting matrices $B_y$ make their first appearance. 
\begin{lemma}\label{lemma:asym_lim_meas}
Let $\Phi\subset \RR^d \setminus B_{\delta_0}(0)$. For every $k\in\bbz$, $\eps > 0$, $0< \delta<\delta_0$, 
 $$\limsup_{n\to \infty} \frac{  P({H_{[kn\eps]}^{(n)}} Z_k \in \lambda_n
   \Psi, G_{[kn\eps]}^{(n)} Z_k \in \lambda_n \Phi)}{P(h^{(n)} \|Z\| > \lambda_n)} \leq \frac{
   \nu(B_{k\eps}^{-1}\Psi^{\delta} \cap C_{k\eps}^{-1}\Phi^{\delta})}{\nu(\{z: 
   \|z\| \geq 1\})},
 $$
 $$
\liminf_{n\to \infty} \frac{  P({H_{[kn\eps]}^{(n)}} Z_k \in \lambda_n
   \Psi, G_{[kn\eps]}^{(n)} Z_k \in \lambda_n \Phi)}{P(h^{(n)} \|Z\| > \lambda_n)} \geq \frac{  \nu(B_{k\eps}^{-1}\Psi^{-\delta} \cap C_{k\eps}^{-1} \Phi^{-\delta})}{\nu(\{z: \|z\| \geq 1\})}.$$

 
\end{lemma}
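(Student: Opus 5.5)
The idea is to show that the normalized matrices $H_{[kn\eps]}^{(n)}/h^{(n)}$ and $G_{[kn\eps]}^{(n)}/h^{(n)}$ converge deterministically to $B_{k\eps}$ and $C_{k\eps}$. Once that is in hand, the regular variation \eqref{eq:def_regular_varying_Rd} of $Z$ will produce the stated bounds after a small perturbation of the sets, which the $\delta$-neighborhoods absorb.

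\emph{Step 1: convergence of the coefficients.} Write $i=[kn\eps]$. Then $H_i^{(n)}=\sum_{j=-i}^{n-i}A_j$ is a sum over the window $[-k\eps n,(1-k\eps)n]$, and I split it into its positive and negative index parts. By \eqref{eq:def_coeff1} we have $A_j = a_{|j|}(B_\pm + o(1))$. Since $\beta>-1$, Karamata's theorem gives $\sum_{j=0}^{xn}a_j\sim x^{1+\beta}n a_n/(1+\beta)$ uniformly on compact $x$-sets, and also $h^{(n)}\sim n a_n/(1+\beta)$. The positive-index part then contributes $\bigl((1-k\eps)_+^{1+\beta}-(-k\eps)_+^{1+\beta}\bigr)B_+$, and the negative-index part contributes $-a_-(k\eps)B_-$ with the sign convention in \eqref{def:a_y}. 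The finitely many indices near $0$ are negligible because $h^{(n)}\to\infty$. Hence $H_{[kn\eps]}^{(n)}/h^{(n)}\to B_{k\eps}$. Trivially $G^{(n),2}/h^{(n)}=I=C^2$, and $G^{(n),1}=H^{(n)}$, so $G_{[kn\eps]}^{(n)}/h^{(n)}\to C_{k\eps}$ as well.

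\emph{Step 2: sandwiching the events.} Set $W=h^{(n)}Z/\lambda_n$. On the event $\{\|W\|\le R\}$, for $n$ large we have $\|(H^{(n)}_{[kn\eps]}/h^{(n)}-B_{k\eps})W\|<\delta$, and the same bound holds for $G$ and $C$. Therefore
\[
\{H Z\in\lambda_n\Psi,\ GZ\in\lambda_n\Phi\}\cap\{\|W\|\le R\}\subset\{W\in B_{k\eps}^{-1}\Psi^{\delta}\cap C_{k\eps}^{-1}\Phi^{\delta}\},
\]
and the reverse inclusion holds with the inner neighborhoods $\Psi^{-\delta},\Phi^{-\delta}$. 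The contribution of $\{\|W\|>R\}$ is at most $P(\|W\|>R)/P(\|W\|>1)\to R^{-\alpha}$, which vanishes as $R\to\infty$. For the lower bound, the set $\{\|W\|\le R\}$ can be intersected in directly.

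\emph{Step 3: passing to $\nu$.} Because $h^{(n)}/\lambda_n\to0$, regular variation gives
\[
P(W\in\cdot)/P(\|W\|>1)\xrightarrow{v}\nu(\cdot)/\nu(\{\|z\|>1\}).
\]
Now $B^{-1}\Psi^{\delta}\cap C^{-1}\Phi^\delta$ is closed, since the outer neighborhood is closed and preimages under continuous maps are closed. It is also bounded away from $0$, because $C^{-1}\Phi^\delta$ avoids a ball when $\delta<\delta_0$: in case $2$, $C=I$ and $\Phi^\delta$ avoids $B_{\delta_0-\delta}(0)$, and in case $1$ the set $B^{-1}\Psi^\delta$ does so by boundedness of $\|B\|$. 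The Portmanteau limsup for closed sets then gives the upper bound, and the interior-type inner neighborhoods give the liminf bound. Here we may replace $\nu(\{\|z\|>1\})$ by $\nu(\{\|z\|\ge1\})$, because the sphere is $\nu$-null by scaling.

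\emph{Main obstacle.} The main obstacle is the degenerate case where $B_{k\eps}$ is singular (e.g.\ $B_{k\eps}=0$). There $B^{-1}\Psi^\delta$ need not be bounded away from the origin, so Portmanteau cannot be applied directly to the upper bound. In that case I would instead lean on the truncation $\{\|W\|\le R\}$ together with the lower truncation $\|W\|\ge \delta_0/(2\sup\|H/h^{(n)}\|)$, which is forced by $HZ\in\lambda_n\Psi$. This confines $W$ to a compact annulus, on which the closed-set Portmanteau bound applies to the intersection.
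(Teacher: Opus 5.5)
Your proof is correct and follows essentially the paper's route: the paper also uses $H^{(n)}_{[kn\eps]}/h^{(n)}\to B_{k\eps}$ and absorbs the perturbation into the $\delta$-neighborhoods before invoking regular variation. The only difference is that the paper splits off error terms $P\bigl(\|B_{k\eps}h^{(n)}-H^{(n)}_{[kn\eps]}\|\,\|Z\|>\lambda_n\delta/2\bigr)=o\bigl(P(h^{(n)}\|Z\|>\lambda_n)\bigr)$, where you instead truncate on $\{\|W\|\le R\}$.

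Your ``main obstacle'' is not a real one, and it contradicts your own Step 3. Since $\Psi^\delta$ misses $B_{\delta_0-\delta}(0)$ and $\|B_{k\eps}x\|\le\|B_{k\eps}\|\,\|x\|$, the set $B_{k\eps}^{-1}\Psi^\delta$ lies in $\{\|x\|\ge(\delta_0-\delta)/\|B_{k\eps}\|\}$, and it is empty if $B_{k\eps}=0$. So it is bounded away from the origin even when $B_{k\eps}$ is singular. It may be unbounded, but that is harmless on $[-\infty,\infty]^d\setminus\{0\}$, so your extra lower truncation is unnecessary, though also harmless.
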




 
\begin{proof}
For the first statement of the lemma we write 
         \begin{align*}
             & P(H_{[kn\eps]}^{(n)}Z_k \in \lambda_n \Psi, G_{[kn\eps]}^{(n)} Z_k \in \lambda_n \Phi) \leq P(B_{k\eps}
            h^{(n)} Z_k \in \lambda_n \Psi^{\delta/2}, C_{k\eps}h^{(n)}Z_k \in \lambda_n \Phi^{\delta/2})\\
            & + P(\|B_{k\eps}
              h^{(n)} - H_{[kn\eps]}^{(n)}\| \|Z_k\| > \lambda_n
              \delta/2) + P( \|C_{k\eps} h^{(n)} - G_{[kn\eps]}^{(n)}\| \|Z_k\| > \lambda_n \delta/2). 
        \end{align*}
The regular variation implies that 
$$\limsup_{n\to\infty}\frac{P(B_{k\eps}h^{(n)}Z_k\in \lambda_n \Psi^{\delta/2}, C_{k\eps}h^{(n)}Z_k \in \lambda_n \Phi ^{\delta/2})}{P(h^{(n)}
  \|Z\|> \lambda_n)} \leq \frac{\nu(B_{k\eps}^{-1}
  \Psi^{\delta} \cap C_{k\eps}^{-1}\Phi^{\delta})}{\nu(z: \|z\|\geq 1)}, $$
so we only need to show that the remaining terms are asymptotically of a
smaller order. Consider the case $k\leq 0$. By 
 \eqref{eq:def_coeff1} and the regular variation of
the sequence $(a_n)$,  we have $H_0^{(mn)}/h^{(n)}\to
m^{1+\beta}B_+$ for  $m\geq 0$. Writing 
$H_{[kn\eps]}^{(n)} =   H_0^{(-[kn\eps]+ n)} - H_0^{(-[kn\eps])}$, we see that
$H_{[kn\eps]}^{(n)} /h^{(n)}\to B_{k\eps}$. Therefore, by the regular
variation, 
\begin{align*}
&P(\|B_{k\eps}
              h^{(n)} - H_{[kn\eps]}^{(n)}\| \|Z_k\| > \lambda_n
              \delta/2) = P\bigl( h^{(n)}\| B_{k\eps}-H_{[kn\eps]}^{(n)} /h^{(n)}\|\|Z\|>\lambda_n
              \delta/2\bigr) \\
=&o\bigl( P(h^{(n)}
  \|Z\|> \lambda_n)\bigr),
\end{align*}         
and the second remaning term is of a smaller order as well.  Together
this proves the first statement of the lemma for $k\leq 0$, and the
case $k>0$ is similar. The second statement of the lemma can be proved
in the same manner.  
     
\end{proof}

The next lemma shows that the part of the sum in left-hand side of
\eqref{eq:limit_sum_tot} over $i$ too far from 0 is asymptotically negligible. 
\begin{lemma}\label{lemma:intapprox_negligible}
 For all $\delta > 0$, 
    $$\lim_{M\to \infty} \limsup_{n\to \infty}  \frac{\sum_{|i|> Mn}
      P(\|{H_i^{(n)}}Z_i\| > \lambda_n \delta)}{nP(h^{(n)} \|Z\| >
      \lambda_n)}= 0.$$ 
\end{lemma}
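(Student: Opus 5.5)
We bound each probability through the size of the coefficient $H_i^{(n)}$ relative to $h^{(n)}$, and then sum using Potter-type bounds for the regularly varying tail of $\|Z\|$. Write $r_i^{(n)}=\|H_i^{(n)}\|/h^{(n)}$. Then
\[
P(\|H_i^{(n)}Z_i\|>\lambda_n\delta)\le P\bigl(h^{(n)}\|Z\|>\lambda_n\delta/r_i^{(n)}\bigr).
\]

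\emph{Step 1: coefficient bound for $|i|>Mn$.} We show that for $|i|>Mn$ with $M\ge 2$,
\[
r_i^{(n)}\le C\,(|i|/n)^{\beta}
\]
for all large $n$. Take $i<-Mn$; then $H_i^{(n)}=\sum_{j=|i|}^{|i|+n}A_j$. By \eqref{eq:def_coeff1} we have $\|A_j\|\le Ca_j$ for large $j$, so $\|H_i^{(n)}\|\le C n\max_{|i|\le j\le |i|+n}a_j\le Cn\,a_{|i|}$, using Potter bounds for the regularly varying $(a_j)$ with $\beta<0$. Karamata's theorem gives $h^{(n)}\sim n a_n/(1+\beta)$. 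Hence
\[
r_i^{(n)}\le C\,a_{|i|}/a_n\le C(|i|/n)^{\beta+\eta}
\]
for any small $\eta>0$ with $\beta+\eta<0$, again by Potter. The case $i>Mn$ is symmetric, with $j$ ranging over $[-i,n-i]$, i.e. the negative indices, using $B_-$ and the convention $a_{-j}=a_j$. Write $\gamma=-(\beta+\eta)>0$, so that $r_i^{(n)}\le C(|i|/n)^{-\gamma}$ and $r_i^{(n)}\to0$ as $|i|/n\to\infty$.

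\emph{Step 2: uniform tail comparison.} Put $x_n=\lambda_n/h^{(n)}$, which tends to infinity by \eqref{e:lambda.n} and Remark \ref{rk:lambda.seq}. By the Potter bounds for $\bar F(x)=P(\|Z\|>x)$, for $t\ge1$ and $x_n$ large,
\[
\bar F(t x_n\delta)/\bar F(x_n)\le C\delta^{-\alpha-\eta'}t^{-(\alpha-\eta')}.
\]
Apply this with $t=1/r_i^{(n)}\ge c(|i|/n)^{\gamma}$. It gives
\[
\frac{P(\|H_i^{(n)}Z_i\|>\lambda_n\delta)}{P(h^{(n)}\|Z\|>\lambda_n)}\le C_\delta\,(|i|/n)^{-\gamma(\alpha-\eta')}.
\]
Summing over $|i|>Mn$ and dividing by $n$ gives a Riemann sum bounded by $C_\delta\int_M^\infty y^{-\gamma(\alpha-\eta')}\,dy$. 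This is finite and tends to $0$ as $M\to\infty$ provided $\gamma(\alpha-\eta')>1$. Since $-\beta\alpha>1$ by \eqref{e:beta.tange}, this holds once $\eta,\eta'$ are chosen small enough.

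\emph{Main obstacle.} Potter's bound for $\bar F$ only holds when the argument $tx_n\delta$ is large. This is fine here, since $t\ge1$ and $x_n\to\infty$. The more delicate issue is making the coefficient bound in Step 1 uniform in $i$, including very large $|i|$ where $n\ll|i|$, and handling the finitely many small indices $j$ where \eqref{eq:def_coeff1} has not yet taken effect. For $|i|>Mn$ with $n$ large, all $j$ in the range satisfy $|j|\ge (M-1)n$, so only the asymptotic regime is used. The bound $\sum_{j=|i|}^{|i|+n}a_j\le Cn\sup_{j\ge|i|}a_j\le C' n a_{|i|}$ then follows from the uniform convergence theorem, or Potter, for $(a_j)$. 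I would verify this carefully, treating the two signs of $i$ separately and keeping track of the constants, which may depend on $\delta$ but not on $M$.
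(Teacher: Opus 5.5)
Your proposal is correct and follows essentially the same route as the paper. Both arguments bound $\|H_i^{(n)}\|$ by $C\,n\,a_{|i|}$-type quantities (the paper via a nonincreasing sequence equivalent to $(a_j)$, you via Potter bounds), apply Potter's bound to the tail of $\|Z\|$ to get $(h_i^{(n)}/h^{(n)})^{\alpha-s}$, and close using $(\alpha-s)\beta<-1$. The one difference is at the summation step: the paper sums $a_i^{\alpha-s}$ with Karamata's theorem, while you convert to a pure power of $|i|/n$ and compare with $\int_M^\infty y^{-\gamma(\alpha-\eta')}\,dy$. Your implicit requirement that $M$ be large enough for $t=1/r_i^{(n)}\ge 1$ is harmless.
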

\begin{proof}
By \eqref{eq:def_coeff1}, there exists $C> 0$ such that
$\|A_i\|\leq Ca_i$ for all $i$, so that $\|H_i^{(n)}\|\leq Ch_i^{(n)}$
for all $i, n$. Since $Z$ is regularly varying, Potter's bounds say
that for any $s>0$, 
$$P(\|{H_{i}^{(n)}} Z_i\| > \lambda_n \delta)/P(h^{(n)}\|Z\|>
\lambda_n)  \leq C \bigl(h_{i}^{(n)}/h^{(n)}\bigr)^{\alpha - s}.$$
We can find a nonincreasing positive sequence $(b_n)$ such that $b_n/a_n\to
1$ as $n\to\infty$; see Theorem 1.5.3 in
\cite{bingham:goldie:teugels:1989}, which means that for $i<0$, 
$h_i^{(n)}\leq Cnb_{-i}\leq Cna_{-i}$, while for $i>n$, $h_i^{(n)}\leq
Cb_{n-i}\leq Ca_{n-i}$.  Further, by \eqref{e:beta.tange} 
 we can choose a small enough $s$ such that $(\alpha - s)\beta < -1$,
so that by Karamata's theorem, 
\begin{align*}
&\limsup_{n\to \infty} \frac{\sum_{|i|> Mn} P(\|{H_i^{(n)}} Z_i\| >
                 \lambda_n \delta)}{n P(h^{(n)}\|Z\|> \lambda_n)}\leq
                 C    \limsup_{n\to \infty} \frac{1}{n}\sum_{|i|> Mn}\bigl(h_{i}^{(n)}/h^{(n)}\bigr)^{\alpha - s}\\
  \leq &  C    \limsup_{n\to \infty} \frac{1}{n(h^{(n)})^{\alpha-s}}
         \left[ \sum_{i<-Mn}
         (Cna_{-i})^{\alpha-s}+
         \sum_{i>Mn} (Cna_{n-i})^{\alpha-s} \right] \\
\leq & C \limsup_{n\to \infty} \frac{n^{\alpha-s}}{n(h^{(n)})^{\alpha-s}}\sum_{i>(M-1)n} a_i^{\alpha-s}
\leq C \limsup_{n\to \infty}
       \frac{n^{\alpha-s}}{n(na_{n})^{\alpha-s}}(M-1)n (a_{\lceil (M-1)n\rceil})^{\alpha-s}\\
=& C(M-1)\lim_{n\to\infty} \bigl(a_{\lceil
   (M-1)n\rceil}/a_n\bigr)^{\alpha-s}=C(M-1)^{1+\beta(\alpha-s)} \to 0
   \ \ \text{as} \ M\to\infty.
\end{align*}

\end{proof}

We are now ready to prove Proposition \ref{prop:int_approx}. 

\begin{proof}[Proof of Proposition \ref{prop:int_approx}]
Let $M>0$.  By Lemmas \ref{l:bounds} and \ref{lemma:asym_lim_meas},
for any $\eps>0, \, \delta>0$, 
\begin{align*}
    & \limsup_{n\to\infty}  \frac{\sum_{|i|\leq Mn} E\bigl( e^{-g(i/n,
    G_i^{(n)} Z_i /\lambda_n)} \one_{\Gamma}(H_i^{(n)} Z_i/\lambda_n)\bigr) }
{n P(h^{(n)}\|Z\| > \lambda_n)}\\
\leq&\eps\sum_{l=0}^{L+1} \sum_{a_{l-1}/\eps\leq
                 k<a_l/\eps} \sum_{j=1}^{m_l} e^{-b_{lj}} \limsup_{n\to\infty}  \frac{ P\bigl(
                 H_{[kn\eps]}^{(n)}Z_i/\lambda_n \in 
                 \Gamma^\delta, G_{[kn\eps]}^{(n)} Z_i /\lambda_n \in \lambda_n A_{lj} \bigr)}
{P(h^{(n)}\|Z\| > \lambda_n)}\\
\leq& \eps\sum_{l=0}^{L+1} \sum_{a_{l-1}/\eps\leq
                 k<a_l/\eps} \sum_{j=1}^{m_l} e^{-b_{lj}}
\frac{\nu((B_{k\eps})^{-1} \Gamma^{2\delta} \cap (C_{k\eps})^{-1} A_{lj}^{2\delta})}{\nu(\{z: \|z\| \geq
      1\})} \\
=& 
\frac{1}{\nu(\{z: \|z\|\geq 1\})}
\int_{-M}^M \sum_{l=0}^{L+1}  \sum_{j=1}^{m_l} e^{-b_{lj}}\nu((B_{y}^{\eps})^{
      -1}\Gamma^{2\delta} \cap (C_y^{\eps})^{-1} A_{lj}^{2\delta}) \ dy,  
\end{align*}

where we denote $B_{y}^{\eps} = B_{k\eps},
C_y^{\eps} = C_{k\eps},$ for $ y\in [k\eps,
(k+1)\eps)$. Since $B_{y}^{\eps}\to B_y$, $C_y^{\eps}\to C_y$ as $\eps\to 0$ for every $y$,
and, for $\delta<\delta_0/2$,  the function $y\to \nu((B_{y}^{\eps})^{
      -1}\Gamma^{2\delta})$ is bounded uniformly in $y$ and $\eps$, we
    let $\eps\to 0$ and     conclude by Fatou's lemma that for any
    such $\delta$,

    \begin{align*}
&  \limsup_{n\to\infty}  \frac{\sum_{|i|\leq Mn} E\bigl( e^{-g(i/n,
    G_i^{(n)}Z_i/\lambda_n)} \one_{\Gamma}(H_i^{(n)} Z_i/\lambda_n)\bigr) }
                     {n P(h^{(n)}\|Z\| > \lambda_n)}\\
\leq& \frac{1}{\nu(\{z: \|z\|\geq 1\})}
\int_{-M}^M \sum_{l=0}^{L+1}  \sum_{j=1}^{m_l}
      e^{-b_{lj}}\limsup_{\eps\to 0}\nu((B_{y}^{\eps})^{
      -1}\Gamma^{2\delta} \cap (C_y^{\eps})^{-1} A_{lj}^{2\delta}) \ dy
      \\
   \leq&   \frac{1}{\nu(\{z: \|z\|\geq 1\})}
\int_{-M}^M \sum_{l=0}^{L+1}  \sum_{j=1}^{m_l}
      e^{-b_{lj}} \nu(B_{y}^{
      -1}\Gamma^{3\delta} \cap C_y^{-1} A_{lj}^{3\delta}) \ dy.
    \end{align*}

 Because of the $\nu$-continuity assumptions on the set $B_y^{-1} \Gamma$ and $C_y^{-1} A_{lj}$, we let $\delta\to 0$ to conclude that
    \begin{align*}
&  \limsup_{n\to\infty}  \frac{\sum_{|i|\leq Mn} E\bigl( e^{-g(i/n,
    G_i^{(n)} Z_i/\lambda_n)} \one_{\Gamma}(H_i^{(n)} Z_i/\lambda_n)\bigr) }
                     {n P(h^{(n)}\|Z\| > \lambda_n)}\\
   \leq&   \frac{1}{\nu(\{z: \|z\|\geq 1\})}
\int_{-M}^M \sum_{l=0}^{L+1}  \sum_{j=1}^{m_l}
      e^{-b_{lj}} \nu(B_{y}^{
      -1}\Gamma \cap C_y^{-1} A_{lj})) \ dy.
    \end{align*}
 By Lemma \ref{lemma:intapprox_negligible} we can let $M\to\infty$ and
 obtain 
    \begin{align*}
&  \limsup_{n\to\infty}  \frac{\sum_{i=-\infty}^\infty E\bigl( e^{-g(i/n,
    G_i^{(n)}Z_i/\lambda_n)} \one_{\Gamma}(H_i^{(n)} Z_i/\lambda_n)\bigr) }
                     {n P(h^{(n)}\|Z\| > \lambda_n)}\\
   \leq&   \frac{1}{\nu(\{z: \|z\|\geq 1\})}
\int_{-\infty}^\infty \sum_{l=0}^{L+1}  \sum_{j=1}^{m_l}
      e^{-b_{lj}}  \nu(B_{y}^{
      -1}\Gamma \cap C_y^{-1} A_{lj}) \ dy \\
=&      \frac{1}{\nu(\{z: \|z\|> 1\})}  \int_{-\infty}^\infty \left( 
\int_{B_y^{-1}\Gamma} e^{-g(y,C_yx)} \nu (dx)\right)\ dy.
    \end{align*}
 
A similar argument gives us a matching lower bound 
   \begin{align*}
&  \liminf_{n\to\infty}  \frac{\sum_{i=-\infty}^\infty E\bigl( e^{-g(i/n,
    G_i^{(n)}/\lambda_n)} \one_{\Gamma}(H_i^{(n)} Z_i/\lambda_n)\bigr) }
                     {n P(h^{(n)}\|Z\| > \lambda_n)}\\
\geq&      \frac{1}{\nu(\{z: \|z\|> 1\})}  \int_{-\infty}^\infty \left( 
\int_{B_y^{-1}\Gamma} e^{-g(y,C_yx)} \nu (dx)\right) \ dy, 
    \end{align*}
and the proof of the proposition is complete. 

\end{proof}
\begin{remark} \label{rk:prop1.gen}
For future use we note that we have actually proved that for any
measurable set $\Psi$ bounded away from the origin,
\begin{align} \label{e:less.prec.p1}
&\frac{\int_{-\infty}^\infty \nu\bigl((B_y^{-1}\Psi)^{\circ}\bigr)\
  dy}{\nu(\{z: \|z\| \geq 1\})} \leq  \liminf_{n\to \infty} \frac{
  \sum_{i= -\infty}^{\infty} {P({H_i^{(n)}} Z_i \in \lambda_n
    \Psi)}}{nP(h^{(n)}\|Z\|> \lambda_n)}\\
\notag \leq& \limsup_{n\to \infty} \frac{
  \sum_{i= -\infty}^{\infty} {P({H_i^{(n)}} Z_i \in \lambda_n
    \Psi)}}{nP(h^{(n)}\|Z\|> \lambda_n)}\leq
\frac{\int_{-\infty}^\infty \nu\bigl(\overline{B_y^{-1}\Psi}\bigr)\ 
  dy}{\nu(\{z: \|z\| \geq 1\})}. 
\end{align}

\end{remark}


\section{Catastrophe principle in action}\label{sec:prop_2_prob_equi}

The catastrophe principle describes how certain rare events are most
likely to happen. In this section, we make this precise by proving
asymptotic equivalence of certain probabilities. The proposition below
is the key result of this section; a similar statement holds in the
short memory case as it was shown in \cite{wang:samorodnitsky:2025}.
We use the notation $I_n^M := 
[-nM,nM]\cap\mathbb{Z}$, $M>0$, and we say that positive functions
$a(n,M)$ and $b(n,M)$ are asymptotically equivalent if
$$
\lim_{M\to\infty} \liminf_{n\to\infty} a(n,M)/b(n,M)=
\lim_{M\to\infty} \limsup_{n\to\infty} a(n,M)/b(n,M)=1.
$$

\begin{proposition}\label{prop:asym_prob_equi}
   Let \( M, \delta  > 0 \) and suppose $\Gamma$  
satisfies Assumption \ref{ass:tail_measure}. Then the following are
asymptotically equivalent: 


    \begin{enumerate} 
     \item $P(\text{there exists } i \in I_n^M \ \text{such that} \
       H_{i}^{(n)} Z_i \in \lambda_n \Gamma)$; 
        \item $\sum_{i = -\infty}^\infty P(H_{i}^{(n)} Z_i \in
          \lambda_n \Gamma)$; 
        \item $P(S_0^n \in \lambda_n \Gamma) $; 
        \item $P\bigl(S_0^n \in \lambda_n \Gamma \ \text{and there exists }
          i\in I_n^M \ \text{such that} \ H_i^{(n)} Z_i\in \lambda_n
          \Gamma\bigr)$;         
        \item 
$P\bigl(S_0^n \in \lambda_n\Gamma \ \text{and there exists } i\in
I_n^M \ \text{such that} \ H_i^{(n)} Z_i \in \lambda_n \Gamma \ \text{while
  for all} \   l\in I_n^M, \, l\neq i, \, \|H_l^{(n)} Z_l\| \leq \lambda_n
\delta\bigr)$. 
    \end{enumerate}
\end{proposition}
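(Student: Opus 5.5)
We take (2) as the reference quantity. By \eqref{eq:limit_sum_tot} and Assumption \ref{ass:tail_measure}, it is of exact order $nP(h^{(n)}\|Z\|>\lambda_n)$ with a strictly positive limiting constant. The proof then reduces to comparing each of (1), (3), (4), (5) with (2), relative to this normalization. The chain we aim for is
\[
(5)\le(4)\le\min\{(1),(3)\}, \qquad (1)\le(2),
\]
together with lower bounds $(5)\gtrsim(2)$ and an upper bound $(3)\lesssim(2)$.

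\textbf{Step 1: (1) versus (2).} The bound $(1)\le(2)$ is the union bound. For the reverse, Bonferroni gives
\[
(1)\ \ge\ \sum_{i\in I_n^M}P(H_i^{(n)}Z_i\in\lambda_n\Gamma)\;-\;\Bigl(\sum_{i\in I_n^M}P(\|H_i^{(n)}Z_i\|>\lambda_n\delta_0)\Bigr)^2 .
\]
The squared term is $O\bigl((nP(h^{(n)}\|Z\|>\lambda_n))^2\bigr)$. Since $nP(h^{(n)}\|Z\|>\lambda_n)\to0$ by \eqref{e:lambda.n}, this term is negligible. By Lemma \ref{lemma:intapprox_negligible}, the terms with $|i|>Mn$ contribute a fraction of (2) that vanishes as $M\to\infty$. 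Together these give asymptotic equivalence of (1) and (2).

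\textbf{Step 2: lower bound for (5).} On the event in (5), write
\[
S_0^n=H_i^{(n)}Z_i+R_i, \qquad R_i=\sum_{l\ne i}H_l^{(n)}Z_l .
\]
We bound (5) below by
\[
\sum_{i\in I_n^M}P\bigl(H_i^{(n)}Z_i\in\lambda_n\Gamma^{-\eta},\ \|R_i\|\le\lambda_n\eta,\ \|H_l^{(n)}Z_l\|\le\lambda_n\delta\ \forall\, l\in I_n^M\setminus\{i\}\bigr),
\]
where the events for different $i$ are disjoint once $\delta<\delta_0$. By independence of $Z_i$ from $(Z_l)_{l\ne i}$, each summand factors. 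The second factor tends to $1$ uniformly in $i$, for two reasons:
\begin{itemize}
\item $P(\|R_i\|>\lambda_n\eta)\to0$ by the moment argument of Remark \ref{rk:lambda.seq}, applied with $H_i^{(n)}$ removed; the bound is uniform in $i$.
\item $P(\exists\, l:\|H_l^{(n)}Z_l\|>\lambda_n\delta)\le C\,nP(h^{(n)}\|Z\|>\lambda_n\delta)\to0$.
\end{itemize}
The remaining sum $\sum_{i\in I_n^M}P(H_i^{(n)}Z_i\in\lambda_n\Gamma^{-\eta})$ is handled by \eqref{e:less.prec.p1}. Letting $\eta\to0$ and using the $\nu$-continuity in Assumption \ref{ass:tail_measure}, then $M\to\infty$, gives $\liminf (5)/(2)\ge1$. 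Since $(5)\le(4)\le(1)\le(2)$, all of (1), (2), (4), (5) are equivalent, and $(3)\ge(4)$ gives the lower bound for (3).

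\textbf{Step 3: upper bound for (3).} This is the main obstacle: we must show $P(S_0^n\in\lambda_n\Gamma)\le(1+o(1))(2)$. Fix a small $\eta$ and split according to the largest term $\max_i\|H_i^{(n)}Z_i\|$. We take $\lambda_n\eta$ rather than $\lambda_n\delta$ as the truncation level, so that the one-big-jump case lands in $\Gamma^{\eta}$.
\begin{enumerate}
\item[(a)] Exactly one index $i$ (anywhere in $\ZZ$) has $\|H_i^{(n)}Z_i\|>\lambda_n\eta$, and $\|R_i\|\le\lambda_n\eta$. Then $H_i^{(n)}Z_i\in\lambda_n\Gamma^{\eta}$. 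Summing over $i$ and applying \eqref{e:less.prec.p1} to $\Gamma^{\eta}$, then letting $\eta\to0$ via $\nu$-continuity, bounds this piece by $(1+o(1))(2)$.
\item[(b)] Two or more indices exceed $\lambda_n\eta$. This probability is at most
\[
\Bigl(\sum_i P(\|H_i^{(n)}Z_i\|>\lambda_n\eta)\Bigr)^2=O\bigl((nP(h^{(n)}\|Z\|>\lambda_n))^2\bigr),
\]
which is $o$ of (2).
\item[(c)] The big-jump case with $\|R_i\|>\lambda_n\eta$. By independence this is bounded by $\sum_i P(\|H_i^{(n)}Z_i\|>\lambda_n\eta)\cdot\sup_i P(\|R_i\|>\lambda_n\eta)=o\bigl(nP(h^{(n)}\|Z\|>\lambda_n)\bigr)$.
\item[(d)] All terms are small, i.e.\ $\|H_i^{(n)}Z_i\|\le\lambda_n\eta$ for every $i$, yet $\|S_0^n\|>\lambda_n\delta_0$. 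This is the genuinely hard part. The plan is to truncate: set $Z_i'=Z_i\one(\|H_i^{(n)}Z_i\|\le\lambda_n\eta)$ and use a Fuk–Nagaev type inequality for the sum of independent vectors $H_i^{(n)}Z_i'$. The variances, or $p$-th moments when $\alpha\le2$, are bounded as in Remark \ref{rk:lambda.seq}. The resulting polynomial term has the form $\bigl(nP(h^{(n)}\|Z\|>\lambda_n\eta)\bigr)^{\delta_0/\eta}$, which is $o(nP(h^{(n)}\|Z\|>\lambda_n))$ once $\eta<\delta_0/2$. The Gaussian or moment term decays faster than any power of $n$ thanks to the gap in \eqref{e:lambda.n}. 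The truncated means also need care: they are bounded by $\sum_i\|H_i^{(n)}\|E\|Z\|\one(\|H_i^{(n)}Z\|>\lambda_n\eta)$, and we check this is $o(\lambda_n)$ using Karamata together with the fact that $\sum_i\|H_i^{(n)}\|^{\alpha-s}$ is of order $n(h^{(n)})^{\alpha-s}$.
\end{enumerate}
Combining (a)–(d) gives $\limsup (3)/(2)\le1$, completing the equivalence of all five quantities.
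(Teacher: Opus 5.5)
Your proposal is correct and is essentially the paper's argument. The case split in your Step 3 mirrors the paper's upper bound for $(2)\sim(3)$ combined with Lemma \ref{lemma:prob_equi_sumDeviation}: one big term landing in $\lambda_n\Gamma^{\eta}$, a big term with a large independent residual, and no big term, the last handled by truncation plus an exponential-type bound. Your Fuk--Nagaev step plays the role of Prokhorov's inequality in Lemma \ref{lemma:WLLN_longmem}. The only organizational difference is that you prove the lower bound directly for the smallest quantity (5), using disjointness of the events over $i$ (valid for $\delta<\delta_0$, and (5) is monotone in $\delta$) instead of Bonferroni; this makes the paper's separate steps $(3)\sim(4)$ and $(4)\sim(5)$ unnecessary, and also the lower bound in your Step 1.
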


\begin{remark}
By the equivalence of $(2)$ and $(3)$ in Proposition
\ref{prop:asym_prob_equi} and Proposition \ref{prop:int_approx} we see
that 
    \begin{equation}\label{eq:asym_equivalence}
        {P(S_0^n \in \lambda_n \Gamma)} \sim \frac{n P(h^{(n)} \|Z\| >
          \lambda_n)}{\nu(\{z: \|z\|\geq 1\})}  \int_{-\infty}^\infty
        \nu (B_y^{-1}\Gamma) \, dy. 
    \end{equation}
\end{remark}

\medskip

Before proving the proposition, we first establish two lemmas.  Both
of them, effectively, rule out the possibility that more than one
exceptional noise variable contributes to the rare event. 

\begin{lemma}\label{l:elem.fact}
  For $M, \delta>0$ and measurable sets $\Phi, \Psi \subset \RR^d \setminus B_{\delta}(0)$,
\begin{equation} \label{e:two.big.sum}
\lim_{n\to\infty}\frac{\sum_{i \neq j\in I_n^M } P\bigl( H_i^{(n)} Z_i
  \in \lambda_n \Phi, \, H_j^{(n)} Z_j \in \lambda_n \Phi)}{\sum_{i\in I_n^M } P(H_i^{(n)} Z_i \in \lambda_n \Psi)}=0. 
\end{equation}
\end{lemma}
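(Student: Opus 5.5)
Independence of $Z_i$ and $Z_j$ for $i\neq j$ makes the numerator a product: it is at most
$$
\Bigl(\sum_{i\in I_n^M} P\bigl(\|H_i^{(n)}Z_i\|>\lambda_n\delta\bigr)\Bigr)^2 ,
$$
since $\Phi$ is bounded away from $B_\delta(0)$. So the proof reduces to showing (a) that $\sum_{i\in I_n^M} P(\|H_i^{(n)}Z\|>\lambda_n\delta)$ is $O\bigl(nP(h^{(n)}\|Z\|>\lambda_n)\bigr)$, (b) that $nP(h^{(n)}\|Z\|>\lambda_n)\to 0$, and (c) that the denominator is of exact order $nP(h^{(n)}\|Z\|>\lambda_n)$. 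Steps (a) and (b) together make the numerator $O\bigl((nP(h^{(n)}\|Z\|>\lambda_n))^2\bigr)$, and (c) then gives a ratio of order $nP(h^{(n)}\|Z\|>\lambda_n)\to 0$.

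For (a), use $\|H_i^{(n)}\|\le C h_i^{(n)}$ together with $h_i^{(n)}\le C h^{(n)}$ uniformly for $|i|\le Mn$; the latter follows from Karamata's theorem since $a$ is regularly varying with $\beta>-1$. Regular variation of $\|Z\|$ then gives, uniformly in $i\in I_n^M$,
$$
P\bigl(\|H_i^{(n)}Z\|>\lambda_n\delta\bigr)\le P\bigl(Ch^{(n)}\|Z\|>\lambda_n\delta\bigr)\le C' P\bigl(h^{(n)}\|Z\|>\lambda_n\bigr),
$$
and summing over the $(2M+1)n$ indices gives (a). This is the same bound used in the proof of Lemma \ref{l:bounds}.

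For (b), $h^{(n)}\sim n a_n/(1+\beta)$ is regularly varying with index $1+\beta$, and by \eqref{e:lambda.n} we have $\lambda_n\ge n^{1+\beta+\max(1/2,1/\alpha)+\eta}$ for some $\eta>0$. Hence $\lambda_n/h^{(n)}\ge n^{1/\alpha+\eta'}$ for some $\eta'>0$. Potter's bounds then give $P(\|Z\|>\lambda_n/h^{(n)})\le n^{-(1/\alpha+\eta')(\alpha-s)}$ for small $s>0$, and choosing $s$ small enough makes this $o(n^{-1})$. This is the step where the lower bound \eqref{e:lambda.n} on $\lambda_n$ is genuinely needed, and I expect it to be the main point to check carefully.

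Step (c) is the main obstacle, because the denominator involves a general $\Psi$ that might have little mass. I would handle it through Remark \ref{rk:prop1.gen}, restricted to $I_n^M$ (its proof goes through Lemmas \ref{l:bounds} and \ref{lemma:asym_lim_meas} on $|i|\le Mn$). This gives
$$
\liminf_{n\to\infty}\frac{\sum_{i\in I_n^M}P(H_i^{(n)}Z_i\in\lambda_n\Psi)}{nP(h^{(n)}\|Z\|>\lambda_n)}\ \ge\ c\int_{-M}^M\nu\bigl((B_y^{-1}\Psi)^\circ\bigr)\,dy ,
$$
so the denominator is of exact order $nP(h^{(n)}\|Z\|>\lambda_n)$ whenever this integral is positive. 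This holds in the intended applications, namely $\Psi=\Gamma$ satisfying Assumption \ref{ass:tail_measure} with $M$ large. If the integral vanishes the denominator can be of smaller order (up to the upper bound in Remark \ref{rk:prop1.gen}), and the lemma can fail as literally stated. So I would state explicitly the standing positivity/continuity hypothesis under which it is used; under it the ratio is $O\bigl(nP(h^{(n)}\|Z\|>\lambda_n)\bigr)\to 0$.
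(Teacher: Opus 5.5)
Your proposal is correct and follows the paper's proof. Independence bounds the numerator by the square of $\sum_{i\in I_n^M}P(\|H_i^{(n)}Z_i\|>\lambda_n\delta)=O(f_n)$, with $f_n=nP(h^{(n)}\|Z\|>\lambda_n)\to 0$ by \eqref{e:lambda.n}. The denominator is bounded below by a constant times $f_n$ via the $I_n^M$-version of Remark~\ref{rk:prop1.gen}. The only difference is that you get the $O(f_n)$ bound directly from regular variation rather than from \eqref{e:less.prec.p1}, which is inessential.

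Your caveat is also justified. The denominator lower bound tacitly requires $\int_{-M}^M\nu\bigl((B_y^{-1}\Psi)^\circ\bigr)\,dy>0$, which the paper's proof does not state. This need not hold for arbitrary $\Psi$, and without it the lemma as stated can fail. It does hold where the lemma is used: $\Psi=\Gamma$, or $\Gamma^{-\eps}$ with $\eps$ small, under Assumption~\ref{ass:tail_measure} with $M$ large.
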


\begin{proof}
 Write
    \begin{align*}
       & \sum_{i \neq j\in I_n^M }P\bigl(H_i^{(n)} Z_i \in \lambda_n \Phi, \, H_j^{(n)}
         Z_j \in \lambda_n \Phi\bigr) \leq \left[\sum_{i  \in I_n^M }P\bigl(H_i^{(n)} Z_i
         \in \lambda_n \Phi\bigr)\right]^2, 
    \end{align*}
    By \eqref{e:less.prec.p1}, the last expression is bounded from
    above by the square of the function $f_n=nP(\|Z\|>
    h^{(n)}/\lambda_n)$ times a constant, and the
denominator in \eqref{e:two.big.sum} is bounded from below by $f_n$
times another constant. Since $f_n\to 0$ by \eqref{e:lambda.n}, the
claim of the lemma follows. 

\end{proof}

\begin{lemma}\label{lemma:prob_equi_sumDeviation} For any $\eps>0$, 
 \begin{equation}\label{eq:sum_deviation_alli}
     \lim_{n\to \infty} \frac{P(\|\sum_{l\neq i} H_l^{(n)} Z_l\| >
       \lambda_n \eps \ \text{\rm for all} \ i)}{\sum_{i  =
         -\infty}^\infty P(H_i^{(n)}Z_i\in\lambda_n \Gamma)} = 0. 
 \end{equation}
\end{lemma}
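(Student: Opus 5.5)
The plan is to reduce the event to one that forces either \emph{no} large single contribution and a large total sum, or \emph{two} large contributions. The denominator is of order $f_n := nP(h^{(n)}\|Z\|>\lambda_n)$ by \eqref{eq:limit_sum_tot} and Assumption \ref{ass:tail_measure}, so the goal is to show that the numerator is $o(f_n)$.

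Fix $\eta\in(0,\eps/4)$ and split $Z_l = Z_l\one(\|H_l^{(n)}Z_l\|\le \lambda_n\eta) + Z_l\one(\|H_l^{(n)}Z_l\|>\lambda_n\eta)$. On the event in \eqref{eq:sum_deviation_alli}, one of three things happens.

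\begin{enumerate}
\item[(a)] At least two indices $l\ne l'$ satisfy $\|H_l^{(n)}Z_l\|>\lambda_n\eta$.
\item[(b)] Exactly one index $i$ has $\|H_i^{(n)}Z_i\|>\lambda_n\eta$. Then, applying the event with this $i$, the truncated sum $\sum_{l\ne i}H_l^{(n)}Z_l\one(\|H_l^{(n)}Z_l\|\le\lambda_n\eta)$ exceeds $\lambda_n\eps$ in norm.
\item[(c)] No index is large. Then the fully truncated sum $\sum_l H_l^{(n)}Z_l\one(\cdot)$ exceeds $\lambda_n\eps$.
\end{enumerate}

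Case (a) is controlled as in Lemma \ref{l:elem.fact}. The probability is at most $\bigl(\sum_l P(\|H_l^{(n)}Z_l\|>\lambda_n\eta)\bigr)^2$. Restricting the inner sum to $I_n^M$ is unnecessary here: Lemma \ref{lemma:intapprox_negligible} together with Remark \ref{rk:prop1.gen} show that the full sum is $O(f_n)$. Since $f_n\to 0$ by \eqref{e:lambda.n}, the square is $o(f_n)$.

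For (b) and (c), bound the probability by a union over $i$ of
\[
P\Bigl(\|H_i^{(n)}Z_i\|>\lambda_n\eta\Bigr)\,P\Bigl(\Bigl\|\sum_{l\neq i}H_l^{(n)}Z_l\one(\|H_l^{(n)}Z_l\|\le\lambda_n\eta)\Bigr\|>\lambda_n\eps\Bigr),
\]
which uses independence, plus the probability that the fully truncated sum exceeds $\lambda_n\eps$. The first factor sums to $O(f_n)$, so it suffices to show that the truncated-sum tail probabilities tend to $0$ uniformly in $i$ for (b), and are $o(f_n)$ for (c).

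\textbf{Key step (the main obstacle).} The truncated variables are bounded by $\lambda_n\eta$ after the linear maps. Recentering costs only $\sum_l\|H_l^{(n)}\|\,E[\|Z\|\one(\|H_l^{(n)}Z\|>\lambda_n\eta)]$. By Karamata and Potter bounds this is of order $\lambda_n\sum_l P(\|H_l^{(n)}Z\|>\lambda_n\eta)=O(\lambda_n f_n)=o(\lambda_n)$, using $E Z=0$. For the centered truncated sum I would use a Fuk--Nagaev / Prokhorov-type exponential inequality (for $\alpha>2$ with the variance $\sum\|H_l^{(n)}\|^2=O(n^{2\beta+3})$), or a high-moment Rosenthal bound of order $p>\alpha$ when $1<\alpha\le2$. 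These give tail probability at most
\[
\Bigl(\frac{C\sum_l E\|H_l^{(n)}Z_l\|^{2}\one(\cdot)}{\lambda_n^2\eps\eta}\Bigr)^{\eps/(2\eta)}+\text{(Gaussian term)}.
\]
The truncated $p$-moment sums are $O(\lambda_n^p f_n)$ for $p>\alpha$, and $O(n^{p(1+\beta)+1}\lambda_n^{0})$-type quantities for $p<\alpha$, as in Remark \ref{rk:lambda.seq}. Hence the base is $O(f_n\vee n^{2(1+\beta)+1}/\lambda_n^2)$, which is polynomially small in $n$ by \eqref{e:lambda.n}. Meanwhile $f_n$ is bounded below by a negative power of $n$ whose size is controlled by \eqref{e:lambda.n} and the regular variation. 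Choosing $\eta$ small (so that the exponent $\eps/(2\eta)$ is large) therefore makes the bound $o(f_n)$, which is even better than what (b) needs. The delicate point is the bookkeeping in the range $1<\alpha\le 2$, where the Gaussian term must be replaced by the $p$-moment term with $p$ close to $\alpha$. That is exactly where the gap in \eqref{e:lambda.n} is used.
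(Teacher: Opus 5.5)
Your decomposition is essentially the paper's. The paper splits on $D_1=\{\sup_i\|H_i^{(n)}Z_i\|>\tau\lambda_n\}$. On $D_1$, which covers your (a) and (b) together, it takes the first big index in an enumeration of $\mathbb{Z}$ and uses independence together with Remark \ref{rk:lambda.seq} for the untruncated remainder. On $D_1^c$, your (c), it applies Prokhorov's inequality through Lemma \ref{lemma:WLLN_longmem}, and makes the exponent $\delta/4\tau$ large by taking $\tau$ small. Your (b) does not need the exponential bound either: the truncated and untruncated remainders differ only on an event of probability $O(f_n)$, so Remark \ref{rk:lambda.seq} already gives uniform convergence to $0$.

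There is, however, a false step in your final comparison. You claim that $f_n$ is bounded below by a negative power of $n$ ``by \eqref{e:lambda.n}''. But \eqref{e:lambda.n} only bounds $\lambda_n$ from below, and hence bounds $f_n$ only from above. Nothing excludes $\lambda_n=e^n$, and then $f_n$ decays exponentially, so a polynomially small base raised to a large fixed power is not automatically $o(f_n)$.

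For $1<\alpha<2$ this does no harm: your base is $O(f_n)$, and $O(f_n^K)=o(f_n)$ for $K>1$. For $\alpha\ge 2$ the base is of order $n(h^{(n)}/\lambda_n)^2$, up to slowly varying factors, and the argument needs repair. The fix is to measure everything in $x_n=h^{(n)}/\lambda_n\to 0$ rather than in $n$:
\begin{itemize}
\item for large $n$, $f_n=nP(\|Z\|>1/x_n)\ge n x_n^{\alpha+s}$;
\item the base is at most $Cn x_n^{2-s}$;
\item \eqref{e:lambda.n} gives $x_n\le n^{-1/2-\gamma}$ for some $\gamma>0$, i.e.\ $n\le x_n^{-2/(1+2\gamma)}$, so the base is at most $Cx_n^{c}$ with $c=4\gamma/(1+2\gamma)-s>0$.
\end{itemize}
Hence $(\text{base})^K/f_n\le C^K x_n^{cK-\alpha-s}\to 0$ once $cK>\alpha+s$, however fast $\lambda_n$ grows. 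This is the implicit reason the paper's bound $C\bigl(n(h^{(n)})^{\alpha-s}/\lambda_n^{\alpha-s}\bigr)^{\delta/4\tau}$ is $o(f_n)$.

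Finally, a fixed-order Rosenthal bound does not give your displayed exponential form. It only gives a bound of order $\eta^{p-\alpha}\eps^{-p}f_n$ plus lower-order terms. That suffices only if you also let $\eta\to 0$ at the end, which is legitimate because your bounds for (a) and (b) hold for every $\eta$.
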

\begin{proof}
  For $\tau>0$ let $D_1 = \{\sup_i \|H_i^{(n)} Z_i\| >  \tau
  \lambda_n \}$, $D_2= D_1^c$. Let $(k_j)_{j=1}^\infty$ be an
  enumeration of $\bbz$. Denoting by $B$ the event in the numerator of
  \eqref{eq:sum_deviation_alli}, we have 
 \begin{align*}
   &P( B\cap D_1) = \sum_{j=1}^\infty P\bigl( B\cap\{ \|H_{k_j}^{(n)} Z_{k_j}\| >  \tau
  \lambda_n , \, \|H_{k_l}^{(n)} Z_{k_l}\| \leq  \tau
  \lambda_n \ \text{for} \ l=1,\ldots, j-1\}\bigr)\\
\leq& \sum_{j=1}^\infty  P(\|H_{k_j}^{(n)} Z_{k_j}\| > \lambda_n \tau)
      P(\|\sum_{l\neq {k_j}} H_l^{(n)} Z_l\| > \lambda_n \eps)\\
 \leq& \sum_{i  =
         -\infty}^\infty P(\|H_i^{(n)}Z_i\|>\lambda_n \tau) \sup_k
   P(\|\sum_{l\neq {k}} H_l^{(n)} Z_l\| > \lambda_n \eps). 
 \end{align*}
Since the supremum in the right-hand side vanishes in the limit (see 
Remark \ref{rk:lambda.seq}), and the sum is of the same order as
the sum in the denominator of \eqref{eq:sum_deviation_alli} by
Lemma \ref{lemma:asym_lim_meas}, we conclude that
$$
P(B\cap D_1) = o\bigl( \sum_{i  =
  -\infty}^\infty P(H_iZ_i\in\lambda_n \Gamma)\bigr).
$$

Next, on the set of $\{\|\sum_{l \neq i} H_l^{(n)} Z_l\|> \lambda_n \eps\}$,
there are $m,k\in \{1,\ldots, d\}$ such that
$  |\sum_{l \neq i}(H_l^{(n)})_{mk} (Z_l)_k| > \lambda_n\eps/d^{3/2}$.
Therefore, 
\begin{align*}
     P(B\cap D_2)
    \leq \sum_{m = 1}^d\sum_{k = 1}^d P\Bigl( \Bigl|\sum_{l \neq 0}(H_l^{(n)})_{mk} (Z_l)_k\Bigr| >
  \lambda_n\eps/d^{3/2}, \, |(H_l^{(n)})_{mk} (Z_l)_k| < \lambda_n \tau
  \Bigr). 
\end{align*}
We use Lemma \ref{lemma:WLLN_longmem} with
$\beta_{in}=|(H_i^{(n)})_{mk}|$.  For small enough $s>0$ we have
$$
\sum_{i =-\infty}^\infty \big|(H_i^{(n)})_{mk}\big|^{\alpha - s}
\leq Cn(h^{(n)})^{\alpha - s},
$$
and the conditions of the lemma are satisfied
thanks to the assumption \eqref{e:lambda.n}. If $1<\alpha\leq 2$, then by \eqref{eq:WLLN_bdd}
for small $s> 0$ we can further bound the right hand side above by  
\begin{align*}
    & \sum_{m = 1}^d\sum_{k = 1}^d  \left(\frac{(1+s)\sum_{i =
      -\infty}^\infty \big|(H_i^{(n)})_{mk}\big|^{\alpha - s}}{ (\eps/d^{3/2})
      \tau^{\alpha-s-1} \lambda_n^{\alpha - s}}\right)^{\delta/4\tau}
      \leq C \left(\frac{n(h^{(n)})^{\alpha - s}}{\lambda_n^{\alpha  -
      s}}\right)^{\delta/4\tau}. 
\end{align*}
By Proposition \ref{prop:int_approx} and \eqref{e:lambda.n} we
conclude that
$$
P(B\cap D_2) = o\bigl( \sum_{i  =
  -\infty}^\infty P(H_iZ_i\in\lambda_n \Gamma)\bigr).
$$
if $\tau>0$ is chosen to be small enough, proving the lemma in the case
$1<\alpha\leq 2$, and the case $\alpha>2$ is similar. 
\end{proof}

We are now ready to prove the main result of this section. 
\begin{proof}[Proof of Proposition \ref{prop:asym_prob_equi}] {\bf (1) $\sim$ (2)}: \ 
Only one asymptotic bounds has to be shown. By the inclusion exclusion formula, 
\begin{align*}
    &P\bigl(\text{there exists} \  i \in I_n^M \ \text{such that} \  {H_i^{(n)}} Z_i \in \lambda_n \Gamma\bigr) \geq \sum_{i\in I_n^M} P({H_i^{(n)}} Z_i \in \lambda_n \Gamma)) \\
    & - \sum_{l\neq i \in I_n^M} P\bigl({H_i^{(n)}} Z_i \in \lambda_n
      \Gamma,\,  {H_{l}^{(n)}} Z_l \in \lambda_n \Gamma\bigr)
    = (1 + o(1)) \sum_{i\in I_n^M} P\bigl({H_i^{(n)}} Z_i \in \lambda_n \Gamma\bigr)
\end{align*}
by Lemma \ref{l:elem.fact}. Letting $M\to\infty$ and using Lemma
\ref{lemma:intapprox_negligible} along with Proposition
\ref{prop:int_approx} establishes the required asymptotic bound. 

{\bf (2) $\sim$ (3)}: \ We proceed as  in the i.i.d. case considered
in \cite{hult:lindskog:mikosch:samorodnitsky:2005}.  By the
inclusion-exclusion formula, for any $\eps >0$, 
\begin{align*}
   & P(S_0^n\in \lambda_n \Gamma) \geq  P\Bigl(\bigcup_{i\in I_n^M }
     \Bigl\{H_i^{(n)} Z_i \in \lambda_n \Gamma^{-\eps}, \, \big\|\sum_{l\neq i} H_l^{(n)} Z_l\big\| < \lambda_n \eps\Bigr\}\Bigr) \\
     \geq& \sum_{i\in I_n^M } P(H_i^{(n)} Z_i \in \lambda_n
           \Gamma^{-\eps}) \, P\Bigl(\big\|\sum_{l\neq i} H_l^{(n)} Z_l\big\| < \lambda_n \eps\Bigr)\\
     -&  \sum_{i\neq l \in I_n^M} P\bigl(H_i^{(n)} Z_i \in \lambda_n
        \Gamma^{-\eps}\bigr) P\bigl( H_l^{(n)} Z_l \in \lambda_n \Gamma^{-\eps}\bigr).
\end{align*}
We know that $\inf_l P\Bigl(\big\|\sum_{l\neq i} H_l^{(n)} Z_l\big\| <
\lambda_n \eps\Bigr)\to 1$; see Remark \ref{rk:lambda.seq}. Along with
Lemma \ref{l:elem.fact} this gives us
$$
\liminf_{n\to \infty} \frac{P(S_0^n\in \lambda_n \Gamma)}{\sum_{i\in I_n^M} P(H_i^{(n)} Z_i\in \lambda_n \Gamma^{-\eps})}  \geq 1
$$
for all $M$ and all $\eps>0$. By Proposition
\ref{prop:int_approx} we may take $\eps\to 0$, and then by Lemma
\ref{lemma:intapprox_negligible} we may take $M\to\infty$ to obtain
the lower bound 
$$
\liminf_{n\to \infty} \frac{P(S_0^n\in \lambda_n \Gamma)}{\sum_{i =
    -\infty}^\infty P(H_i^{(n)} Z_i\in \lambda_n \Gamma)}  \geq 1.
$$
 
For an upper bound we write for $0<\eps<\delta_0$, 
\begin{equation*}\label{eq:prop_asymUB}
\begin{aligned}
    P(S_0^n \in \lambda_n \Gamma) \leq &\sum_{i = -\infty}^{\infty}
    P(H_i^{(n)}Z_i \in \lambda_n \Gamma^\eps) + P\bigl(S_0^n \in
    \lambda_n \Gamma, \, H_i^{(n)} Z_i \not \in \lambda_n
    \Gamma^{\eps} \  \text{for all} \  i\bigr) \\
    \leq & \sum_{i = -\infty}^{\infty} P(H_i^{(n)}Z_i \in
    \lambda_n\Gamma^\eps) +
    P\bigl(\|S_0^n - H_i^{(n)}Z_i\| > \lambda_n \eps \ \text{for all} \
    i\bigr). 
\end{aligned}
\end{equation*}
By Lemma \ref{lemma:prob_equi_sumDeviation} the second term is of a smaller order than
the first term.  Letting $n\to\infty$ and noticing that, by Proposition
\ref{prop:int_approx}  and Assumption \ref{ass:tail_measure} we may
then let $\eps\to 0$, establishing the required upper bound. 

 {\bf (3) $\sim$ (4)}: \ By Lemma \ref{lemma:prob_equi_sumDeviation}
 and the already established equivalence of {\bf (2)} and {\bf (3)}, 
for any $0<\eps<\delta_0$, 
\begin{align*}
   & \limsup_{n\to \infty} \frac{P\bigl(S_0^n\in
     \lambda_n\Gamma, \,  {H_i^{(n)}} Z_i \not \in   \lambda_n
     \Gamma^\eps \ \text{for all} \  i   \bigr)}{P(S_0^n\in \lambda_n\Gamma)}\\
    \leq&  \lim_{n\to \infty} \frac{P(\|\sum_{l\neq i} H_l^{(n)}Z_l\|>
     \lambda_n \eps \ \text{for all}\  i)}{\sum_{i =-\infty}^{\infty} P(H_i^{(n)} Z_i
     \in \lambda_n \Gamma)}  = 0. 
\end{align*}
Now our claims follows since by the already established equivalence
of {\bf (2)} and {\bf (3)} and Proposition \ref{prop:int_approx}  (see
Remark \ref{rk:prop1.gen}), 
\begin{align*}
     & \limsup_{n\to \infty}  \frac{P\bigl(S_0^n\in
     \lambda_n\Gamma, \,  {H_i^{(n)}} Z_i  \in   \lambda_n
     (\Gamma^\eps\setminus \Gamma) \ \text{for some} \  i   \bigr)}{P(S_0^n\in
       \lambda_n\Gamma)}  
  \leq  \frac{ \int_{-\infty}^\infty \nu\bigl( \overline{B_y^{-1} (\Gamma^\eps\setminus\Gamma)}\bigr) \ dy}
         { \int_{-\infty}^\infty \nu( B_y^{-1} \Gamma) \ dy}, 
\end{align*}
and by Assumption \ref{ass:tail_measure} and monotone convergence
theorem  the intergal in the numerator vanishes as $\eps\to 0$.

 {\bf (4) $\sim$ (5)}: \ This statement follows immediately from the
already proved equivalence {\bf (2)}$\sim${\bf (4)}, the fact that
$\Gamma \subset B_{\delta_0}(0)^c$ and Lemma \ref{l:elem.fact}.
\end{proof}

\section{Convergence of point processes}\label{sec:pt_process}

In this section we prove Theorem \ref{thm:ptprocess_longmem}. The
analysis for $N_n^1$ and for $N_n^2$ is similar, hence we will prove
the theorem at the same time for the two cases, using the 
unifying notation $G_i^{(n),j}, C_y^j, \, j=1,2$ in
\eqref{eq:unify_coeff}. We will omit
the superscript $j$ when a statement holds in both cases, and only
use the superscript $j$ when necessary. 
It is 
sufficient to prove that for any continuous function $f:\,
\bbr\times \bigl([-\infty,\infty]^d\setminus \{0\})\to [0,\infty)$ with
compact support we have
\begin{align} \label{e:laplace.f}
&E\left[\exp\left\{ -\sum_{i=-\infty}^\infty  f\big(i/n, \, G_i^{(n)}
Z_i/\lambda_n\big)\right\} \bigg| E_0(n,\Gamma)\right] \\
  \notag \xrightarrow{n\to\infty}&  
\, \frac{\int_\bbr \left( \int_{B_y^{-1}\Gamma} \exp\bigl\{ -f(y,C_yx)\bigr\}\,
              \nu(dx)\right)dy}{\int_\bbr \nu( B_y^{-1}\Gamma) dy}.              
\end{align}
Indeed, the left-hand side of \eqref{e:laplace.f} is the Laplace
transform of  the point process $N_n$,  the right-hand side of
\eqref{e:laplace.f} is the Laplace transform of the single atom point
process $\epsilon_{(I,X)}$, and convergence of the Laplace transforms
is equivalent to weak convergence of the point processes; see
Proposition 3.19 in \cite{resnick:1987}. Furthermore, using the
equivalence of (2) and (3) in Proposition \ref{prop:asym_prob_equi}
and \eqref{eq:limit_sum_tot}, we see that proving \eqref{e:laplace.f}
is the same as proving 
\begin{align} \label{e:laplace.falt}
&\frac{E\left[\exp\left\{ -\sum_{i=-\infty}^\infty  f\big(i/n, \, G_i^{(n)}
    Z_i/\lambda_n\big)\right\} \one_{E_0(n,\Gamma)}\right]}
{n P(h^{(n)}\|Z\|>\lambda_n)}\\
  \notag \xrightarrow{n\to\infty}& 
\, \frac{1}{\nu(\{z: \|z\|> 1\})}
\int_\bbr \left( \int_{B_y^{-1}\Gamma} \exp\bigl\{ -f(y,C_yx)\bigr\}\,
              \nu(dx)\right)dy.
\end{align}

The main challenge is that the expression in the left-hand side of
\eqref{e:laplace.falt} is difficult to analyze due to the complex
structure of the event $E_0(n, \Gamma)$. We solve this problem by
replacing the event $E_0(n, \Gamma)$ by simpler asymptotically
equivalent events that isolate the dominant shock. This 
is formalized in the following lemma. For positive $M,\delta$, $i\in
I_n^M$ and $\rho \in (0, 1)$ let 
\begin{equation}\label{def:pt_decom}
    \begin{aligned}
    & E_0^{(i,M)}(n, \Gamma) = \bigl\{S_0^n\in\lambda_n\Gamma, \,
{H_i^{(n)}}  Z_i \in \lambda_n \Gamma, \, \|{H_{l}^{(n)}}Z_l\|\leq \rho \lambda_n\delta \ \text{for 
  all} \ i\neq l \in I_n^M \bigr\}, \\
  &E_0^{(M)}(n, \Gamma) = \bigcup_{i\in I_n^M} E_0^{(i,M)}(n, \Gamma),\\
  & F_0^{(i,M)}(n, \Gamma) = E_0^{(i,M)}(n, \Gamma)\cap
  \bigl\{\|G_l^{(n)}Z_l\| \leq \lambda_n  \delta \ \text{for all} \ i \neq l \in I_n^M\bigr\}.    \end{aligned}
\end{equation}

\begin{lemma}\label{l:pt_process_small}
For any  nonnegative, measurable function $f: \RR\times \RR^d \to \RR$
we have 
\begin{equation}\label{eq:pt_decomp1}
\frac{E\left[\exp\left\{ -\sum_{i=-\infty}^\infty  f\big(i/n, G_i^{(n)}
  Z_i/\lambda_n\big)\right\} \left(\one_{E_0^{(M)}(n,\Gamma)} - (\sum_{|i|\leq Mn}\one_{F_0^{(i,M)}(n,\Gamma)})\right)\right]}
{n P(h^{(n)}\|Z\|>\lambda_n)} \to 0
\end{equation}

and 
\begin{equation}\label{eq:pt_decomp2}
    \begin{aligned}
         \frac{E\left[\sum_{|i|\leq Mn}\exp\left\{ -   f\big(i/n, \, G_i^{(n)}
  Z_i/\lambda_n\big)\right\} (\one_{F_0^{(i, M)}(n, \Gamma)} -\one_{\{H_i^{(n)}Z_i \in \lambda_n \Gamma\}})\right]}{nP(h^{(n)}\|Z\| > \lambda_n)} \to 0
    \end{aligned}
  \end{equation}
  as $n\to\infty$. 
\end{lemma}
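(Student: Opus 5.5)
Both statements reduce to bounding indicator differences by probabilities of rare events and comparing with $nP(h^{(n)}\|Z\|>\lambda_n)$, using that the exponential factors lie in $[0,1]$. For \eqref{eq:pt_decomp1}, note that $E_0^{(i,M)}(n,\Gamma)$, $i\in I_n^M$, are disjoint: on $E_0^{(i,M)}$ we have $\|H_i^{(n)}Z_i\|>\lambda_n\delta_0$ while all other $\|H_l^{(n)}Z_l\|\le\rho\lambda_n\delta<\lambda_n\delta_0$ once $\rho\delta<\delta_0$, which we may assume. Hence $\one_{E_0^{(M)}}=\sum_{i}\one_{E_0^{(i,M)}}$ and
\[
0\le \one_{E_0^{(M)}(n,\Gamma)}-\sum_{|i|\le Mn}\one_{F_0^{(i,M)}(n,\Gamma)}\le \sum_{|i|\le Mn}\one\bigl\{H_i^{(n)}Z_i\in\lambda_n\Gamma,\ \|G_l^{(n)}Z_l\|>\lambda_n\delta \text{ for some } l\neq i,\ l\in I_n^M\bigr\}.
\]
For $j=1$ the right side vanishes, since $G_l^{(n)}=H_l^{(n)}$ and $\rho<1$. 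For $j=2$, $\|G_l^{(n)}Z_l\|=h^{(n)}\|Z_l\|$. By independence of $Z_i$ and $Z_l$, the expectation is at most
\[
\sum_{i\in I_n^M}P(H_i^{(n)}Z_i\in\lambda_n\Gamma)\cdot (2Mn+1)P(h^{(n)}\|Z\|>\lambda_n\delta).
\]
The first factor is $O(nP(h^{(n)}\|Z\|>\lambda_n))$ by Proposition~\ref{prop:int_approx}. The second factor is $O(nP(h^{(n)}\|Z\|>\lambda_n))\to0$, by regular variation together with \eqref{e:lambda.n}, as in the proof of Lemma~\ref{l:elem.fact}. Since the exponential factor is bounded by $1$, this gives \eqref{eq:pt_decomp1}.

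For \eqref{eq:pt_decomp2}, first note $F_0^{(i,M)}\subset\{H_i^{(n)}Z_i\in\lambda_n\Gamma\}$, so the difference of indicators is nonpositive. Its absolute value is bounded by the indicator of $\{H_i^{(n)}Z_i\in\lambda_n\Gamma\}$ intersected with the union of three events:
\begin{itemize}
\item[(a)] $\|H_l^{(n)}Z_l\|>\rho\lambda_n\delta$ for some $l\ne i$ in $I_n^M$;
\item[(b)] $\|G_l^{(n)}Z_l\|>\lambda_n\delta$ for some $l\ne i$ in $I_n^M$;
\item[(c)] $S_0^n\notin\lambda_n\Gamma$.
\end{itemize}
Terms (a) and (b) are handled exactly as above, by independence and the product bound. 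Lemma~\ref{l:elem.fact}, applied with $\Phi$ the complement of the ball of radius $\rho\delta$, handles (a) directly.

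The main step is (c). Fix $\eps\in(0,\rho\delta)$ and split $\{H_i^{(n)}Z_i\in\lambda_n\Gamma\}$ into the parts where $H_i^{(n)}Z_i\in\lambda_n\Gamma^{-\eps}$ and where $H_i^{(n)}Z_i\in\lambda_n(\Gamma\setminus\Gamma^{-\eps})$.
\begin{itemize}
\item On the first part, $S_0^n\notin\lambda_n\Gamma$ forces $\|\sum_{l\ne i}H_l^{(n)}Z_l\|>\lambda_n\eps$. This event is independent of $Z_i$, and its probability tends to $0$ uniformly in $i$ by Remark~\ref{rk:lambda.seq}, applied to the sum with one term removed. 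The contribution is therefore $o\bigl(\sum_iP(H_i^{(n)}Z_i\in\lambda_n\Gamma)\bigr)=o(nP(h^{(n)}\|Z\|>\lambda_n))$.
\item On the second part, Remark~\ref{rk:prop1.gen} bounds the normalized sum by a constant times $\int\nu\bigl(\overline{B_y^{-1}(\Gamma\setminus\Gamma^{-\eps})}\bigr)dy$. This tends to $0$ as $\eps\to0$ by the $\nu$-continuity in Assumption~\ref{ass:tail_measure} and dominated convergence, exactly as in the step (3)$\sim$(4) of Proposition~\ref{prop:asym_prob_equi}.
\end{itemize}
Letting $n\to\infty$ and then $\eps\to0$ gives \eqref{eq:pt_decomp2}.

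The delicate points are, first, justifying that the removed-term partial sums are uniformly small in $i$. This needs the moment bounds of Remark~\ref{rk:lambda.seq} applied with the coefficient of $Z_i$ dropped, which only decreases the bound. Second, handling the boundary layer $\Gamma\setminus\Gamma^{-\eps}$, where the continuity assumption is essential.
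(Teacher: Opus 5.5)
Your proof is correct, and its skeleton is the paper's. Your events (a), (b), (c) are exactly the paper's $V_2$, $V_3$, $V_1$. You dispose of (a) and (b) the same way the paper does: independence of $Z_i$ from $(Z_l)_{l\neq i}$, together with $(2Mn+1)P(Ch^{(n)}\|Z\|>c\lambda_n)\to 0$.

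Two things differ.

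\textbf{First statement.} In \eqref{eq:pt_decomp1} the paper writes only an upper bound, routing $\|G_l^{(n)}Z_l\|$ through $\|H_l^{(n)}-G_l^{(n)}\|$. You observe that the events $E_0^{(i,M)}(n,\Gamma)$ are disjoint once $\rho\delta<\delta_0$. The bracket then equals $\sum_i \one_{E_0^{(i,M)}(n,\Gamma)\setminus F_0^{(i,M)}(n,\Gamma)}\geq 0$. This is what makes a one-sided bound sufficient, a point the paper leaves implicit.

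\textbf{The term $V_1$.} For $V_1=\sum_{i\in I_n^M}P(H_i^{(n)}Z_i\in\lambda_n\Gamma,\,S_0^n\notin\lambda_n\Gamma)$ the routes differ more substantively.
\begin{itemize}
\item The paper applies Bonferroni to pass to $P(S_0^n\notin\lambda_n\Gamma,\ H_i^{(n)}Z_i\in\lambda_n\Gamma \text{ for some } i\in I_n^M)$ plus a double-exceedance term. It then cites the equivalences of Proposition~\ref{prop:asym_prob_equi}, Lemma~\ref{l:elem.fact} and \eqref{eq:asym_equivalence}.
\item You bound each summand separately, splitting $\Gamma$ into two parts. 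On $\Gamma^{-\eps}$, independence of $Z_i$ and $\sum_{l\neq i}H_l^{(n)}Z_l$, plus the uniform-in-$i$ smallness from Remark~\ref{rk:lambda.seq}, gives a factor $o(1)$. The layer $\Gamma\setminus\Gamma^{-\eps}$ is handled by Remark~\ref{rk:prop1.gen}.
\end{itemize}
The paper's route is shorter once Proposition~\ref{prop:asym_prob_equi} is available. However, those equivalences are stated with $M\to\infty$ after $n\to\infty$, so using them at fixed $M$, as the lemma requires, needs an extra remark (for instance, that $V_1$ is nondecreasing in $M$). Your route is self-contained and needs neither Bonferroni nor Lemma~\ref{l:elem.fact} for this term. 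It works directly at each fixed $M$, in effect inlining the interior/boundary mechanism of the proofs of (2)$\sim$(3) and (3)$\sim$(4) at the level of a single index $i$.

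\textbf{A shared caveat.} This applies to the paper as much as to you. In the boundary-layer step, the limit as $\eps\to0$ is controlled by $\nu\bigl(\overline{B_y^{-1}\Gamma}\setminus B_y^{-1}(\Gamma^\circ)\bigr)$. When $B_y$ is invertible this equals $\nu\bigl(\partial(B_y^{-1}\Gamma)\bigr)=0$. For singular $B_y$ it can be positive even though $B_y^{-1}\Gamma$ is a $\nu$-continuity set. The paper relies on the same implicit nondegeneracy in Proposition~\ref{prop:int_approx} and in step (3)$\sim$(4), so this is not a defect of your proposal relative to the paper.
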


\begin{proof}
 Since $f$ is nonnegative, 
 \begin{align*}
   &  \frac{E\left[\exp\left\{ -\sum_{i=-\infty}^\infty  f\big(i/n, \, G_i^{(n)}
  Z_i/\lambda_n\big)\right\} \left(\one_{E_0^{(M)}(n,\Gamma)} - (\sum_{|i|\leq Mn}\one_{F_0^{(i,M)}(n,\Gamma)})\right)\right]}
{n P(h^{(n)}\|Z\|>\lambda_n)} \\
   \leq & \frac{\sum_{|i| \leq Mn}P(H_i^{(n)} Z_i \in \lambda_n
          \Gamma) P\bigl(\|H_l^{(n)} -
          G_l^{(n)}\| \|Z_l\|> \lambda (1 - \rho)
          \delta \ \text{for some}  \ l\in I_n^M, \, l\not= i\bigr)}{nP(h^{(n)}\|Z\| > \lambda_n)}. 
 \end{align*}
 Note that 


\begin{equation*}
    \limsup_{n\to \infty} \sup_{l\in\bbz }\left\|\bigl( H_l^{(n)} -
      G_l^{(n)}/h^{(n)}\bigr)\right\| \leq  \left(\max(\|B_+\|, \|B_-\|) + 1\right)\coloneqq T,
\end{equation*}
say.  Hence 
\begin{align*}
     & \max_{i\in \bbz}  P\bigl(\|H_l^{(n)} -
          G_l^{(n)}\| \|Z_l\|> \lambda (1 - \rho)
          \delta \ \text{for some}  \ l\in I_n^M, \, l\not= i\bigr)\\
    = & O(nM P((T+1)  h^{(n)} \|Z\| > \lambda_n (1 - \rho) \delta)) = o(1)
\end{align*}
because  $h^{(n)}$ is regularly varying index $1 + \beta$  and
$\lambda_n$ satisfies \eqref{e:lambda.n}. Now \eqref{eq:pt_decomp1}
follows directly from \eqref{eq:limit_sum_tot}.  




For the second statement we write 
\begin{align*}
   & \left|{E\left[\sum_{|i|\leq Mn}\exp\left\{ -   f\big(i/n, \, G_i^{(n)}
  Z_i/\lambda_n\big)\right\} (\one_{F_0^{(i, M)}(n, \Gamma)} -\one_{\{H_i^{(n)}Z_i \in \lambda_n \Gamma\}})\right]}\right|\\
  \leq & { \sum_{|i|\leq Mn}  P(H_i^{(n)}Z_i \in \lambda_n \Gamma, S_0^n \not \in \lambda_n \Gamma)} \\
  & + { \sum_{|i|\leq Mn} P\bigl(H_i^{(n)}Z_i \in \lambda_n \Gamma, \, 
    \|H_l^{(n)}Z_l\| > \lambda_n \rho
    \delta\ \text{for some} \ l \in I_n^M, l\not= i\bigr)} \\
  & +{ \sum_{|i|\leq Mn}  P\bigl(H_i^{(n)}Z_i \in \lambda_n \Gamma,  \, \|G_l^{(n)}Z_l\| > \lambda_n \delta\ \text{for some} \ l \in I_n^M, l\not= i\bigr)}\\
  \coloneqq &  V_1 + V_2 + V_3. 
\end{align*}
We already know that 
\begin{align*}
    \lim_{n\to \infty} \frac{V_2}{nP(h^{(n)}\|Z\| > \lambda_n)} =
  \lim_{n\to \infty} \frac{V_3}{nP(h^{(n)}\|Z\| > \lambda_n)} = 0. 
\end{align*}

Finally, by the inclusion-exclusion principle, 
\begin{align*}
  & \frac{V_1}{nP(h^{(n)}\|Z\| > \lambda_n)} 
    \leq   \frac{ P\bigl(S_0^n \not \in \lambda_n\Gamma,  \, H_i^{(n)}Z_i\in \lambda_n \Gamma\ \text{for some} \ i\in I_n^M\bigr)}{nP(h^{(n)}\|Z\| > \lambda_n)} \\
    & + \frac{P\bigl( H_i^{(n)}Z_i\in \lambda_n \Gamma,  H_j^{(n)} Z_j \in
      \lambda_n \Gamma\ \text{for some} \ i,j\in I_n^M,
      i\not=j\bigr)}{nP(h^{(n)}\|Z\| > \lambda_n)}   \to 0
\end{align*}
by the equivalence between $({\bf 3})$ and $({\bf 4})$ in Proposition
\ref{prop:asym_prob_equi}, Lemma \ref{l:elem.fact} and
\eqref{eq:asym_equivalence}.  
\end{proof}

\begin{proposition} \label{pr:pp.step}
Under the assumptions of Theorem \ref{thm:ptprocess_longmem}, the
statement \eqref{e:laplace.falt} holds for any simple function in
Proposition \ref{prop:int_approx}, such that, for
each $(l,j)$ such that $b_{lj}>0$, the set $A_{lj} $ is bounded away
from the origin. 
\end{proposition}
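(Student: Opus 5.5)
Since $g$ in \eqref{e:simple.g} is nonnegative and vanishes outside $[a_0,a_L)\times\R^d$, I take $M>\max(|a_0|,|a_L|)$, so that only indices $i\in I_n^M$ contribute to $\sum_i g(i/n,G_i^{(n)}Z_i/\lambda_n)$. The plan is to reduce the left-hand side of \eqref{e:laplace.falt} to the sum handled by Proposition \ref{prop:int_approx} in three replacements.

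\emph{Step 1.} Replace $\one_{E_0(n,\Gamma)}$ by $\one_{E_0^{(M)}(n,\Gamma)}$. By the equivalence of (3), (4) and (5) in Proposition \ref{prop:asym_prob_equi}, with $\rho\delta$ in place of $\delta$, the difference of the two probabilities divided by $nP(h^{(n)}\|Z\|>\lambda_n)$ has vanishing $\limsup_n$ once $M\to\infty$. This uses \eqref{eq:asym_equivalence} to compare denominators. Since the exponential factor lies in $[0,1]$, the error is controlled in the same way.

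\emph{Step 2.} Apply \eqref{eq:pt_decomp1} to replace $\one_{E_0^{(M)}}$ by $\sum_{|i|\le Mn}\one_{F_0^{(i,M)}}$. On $F_0^{(i,M)}(n,\Gamma)$, every $l\in I_n^M$ with $l\neq i$ satisfies $\|G_l^{(n)}Z_l\|\le\lambda_n\delta$. Now choose $\delta$ smaller than the distance from the origin of every $A_{lj}$ with $b_{lj}>0$; this is possible because finitely many sets are involved and each is bounded away from the origin. Then $g(l/n,G_l^{(n)}Z_l/\lambda_n)=0$ for every $l\neq i$. So on $F_0^{(i,M)}$ we have $\exp\{-\sum_l g\}=\exp\{-g(i/n,G_i^{(n)}Z_i/\lambda_n)\}$, and this is exactly the reason for the hypothesis that $A_{lj}$ is bounded away from $0$.

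\emph{Step 3.} Apply \eqref{eq:pt_decomp2} to replace $\one_{F_0^{(i,M)}}$ by $\one_{\{H_i^{(n)}Z_i\in\lambda_n\Gamma\}}$. What remains is
$$\frac{\sum_{|i|\le Mn}E\bigl(e^{-g(i/n,G_i^{(n)}Z_i/\lambda_n)}\one_\Gamma(H_i^{(n)}Z_i/\lambda_n)\bigr)}{nP(h^{(n)}\|Z\|>\lambda_n)}.$$
By Lemma \ref{lemma:intapprox_negligible}, the part of the full sum over $|i|>Mn$ is negligible as $M\to\infty$. Proposition \ref{prop:int_approx}(i) or (ii), for $j=1$ or $j=2$ respectively, then yields the right-hand side of \eqref{e:laplace.falt}.

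The main obstacle is bookkeeping the order of limits. Every error term must be uniform enough that one can take $n\to\infty$ first and $M\to\infty$ afterwards, with $\delta$ (and $\rho$) fixed by the $A_{lj}$. I would also check that Proposition \ref{prop:int_approx} needs the $\nu$-continuity of $(pB_++(1-p)B_-)^{-1}A_{lj}$ or of $A_{lj}$, as appropriate. If these continuity properties are not given, I would obtain upper and lower bounds through $A_{lj}^{\pm\delta}$, following Remark \ref{rk:prop1.gen}.
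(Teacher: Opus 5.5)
Your proposal is correct and follows the paper's proof essentially step for step. You replace $E_0(n,\Gamma)$ by $E_0^{(M)}(n,\Gamma)$ via the equivalence of (3) and (5) in Proposition \ref{prop:asym_prob_equi}. You then pass to $\sum_{|i|\le Mn}\one_{F_0^{(i,M)}(n,\Gamma)}$ and to $\one_{\{H_i^{(n)}Z_i\in\lambda_n\Gamma\}}$ via \eqref{eq:pt_decomp1}--\eqref{eq:pt_decomp2}, and finish with Lemma \ref{lemma:intapprox_negligible} and Proposition \ref{prop:int_approx}. Your explicit choice of $\delta$ below the distance of the relevant $A_{lj}$ from the origin, together with $M>\max(|a_0|,|a_L|)$, spells out the step the paper leaves implicit.

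Your closing hedge is unnecessary. The $\nu$-continuity conditions of Proposition \ref{prop:int_approx} are part of the hypothesis ``simple function in Proposition \ref{prop:int_approx}''. Without them, the $A_{lj}^{\pm\delta}$ bounds would not match anyway.
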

\begin{proof}
If $\delta>0$ is small enough and $M$ is large enough, then 
\begin{align*}
&\frac{E\left[\exp\left\{ -\sum_{i=-\infty}^\infty  f\big(i/n, \, G_i^{(n)}
  Z_i/\lambda_n\big)\right\} \one_{E_0^{(M)}(n,\Gamma)}\right]}
{n P(h^{(n)}\|Z\|>\lambda_n)}\\
= & \frac{E\left[  
\exp\left\{ -\sum_{j=-\infty}^\infty  f\big(j/n, \, G_j^{(n)}
  Z_j/\lambda_n\big)\right\} (\sum_{|i|\leq Mn}\one_{F_0^{(i,M)}(n,\Gamma)})\right]}
    {n P(h^{(n)}\|Z\|>\lambda_n)}  + o(1)\\
=&\frac{E\left[\sum_{|i|\leq Mn}  
\exp\left( -   f\big(i/n, \, G_i^{(n)}
  Z_i/\lambda_n\big)\right) \one_{F_0^{(i,M)}(n,\Gamma)}\right]}
    {n P(h^{(n)}\|Z\|>\lambda_n)} + o(1) \\
 =&\frac{ E \left[\sum_{|i|\leq Mn}\exp\left(-
      f\bigl(i/n, {G_i^{(n)}} Z_i/\lambda_n\bigr)\right)
    1_{\{{H_i^{(n)}}  Z_i\in \lambda_n\Gamma\}}\right] } {n
    P(h^{(n)}\|Z\|>\lambda_n)} +o(1), 
\end{align*}
with the steps 2 and 4 following from \eqref{eq:pt_decomp1} and
\eqref{eq:pt_decomp2}. The claim of the proposition now follows,
because by Lemma 
\ref{lemma:intapprox_negligible} and the equivalence between ({\bf 3})
and ({\bf 5}) in Proposition \ref{prop:asym_prob_equi}
\begin{align*}
&\lim_{n\to\infty} \frac{E\left[\exp\left\{ -\sum_{i=-\infty}^\infty
  f\big(i/n, \, G_i^{(n)}
  Z_i/\lambda_n\big)\right\} \one_{E_0(n,\Gamma)}\right]}
{n P(h^{(n)}\|Z\|>\lambda_n)}\\
=&\lim_{M\to\infty} \lim_{n\to\infty} \frac{E\left[\exp\left\{ -\sum_{i=-\infty}^\infty
  f\big(i/n, \, G_i^{(n)}
  Z_i/\lambda_n\big)\right\} \one_{E_0^{(M)}(n,\Gamma)}\right]}
{n P(h^{(n)}\|Z\|>\lambda_n)}\\
=&\lim_{M\to\infty} \lim_{n\to\infty} \frac{ E \left[\sum_{|i|\leq Mn}\exp\left(-
      f\bigl(i/n, {G_i^{(n)}} Z_i/\lambda_n\bigr)\right)
    1_{\{{H_i^{(n)}}  Z_i\in \lambda_n\Gamma\}}\right] } {n
    P(h^{(n)}\|Z\|>\lambda_n)} \\
=&\lim_{n\to\infty} \frac{ E \left[\sum_{i=-\infty}^\infty\exp\left(-
      f\bigl(i/n, {G_i^{(n)}} Z_i/\lambda_n\bigr)\right)
    1_{\{{H_i^{(n)}}  Z_i\in \lambda_n\Gamma\}}\right] } {n
    P(h^{(n)}\|Z\|>\lambda_n)} \\
=&\frac{1}{\nu(\{z: \|z\|> 1\})}
\int_\bbr \left( \int_{B_y^{-1}\Gamma} \exp\bigl\{ -f(y,C_yx)\bigr\}\,
              \nu(dx)\right)dy, 
\end{align*}
where on the last step we used Proposition \ref{prop:int_approx}. 
\end{proof}

Suppose now that $f:\, \R\times \R^d\to [0,\infty)$ of
the form 
\begin{equation} \label{e:semisimple.f}
f(y,x)= \sum_{l=1}^L \one_{[a_{l-1},a_l)}(y) f_l(x), \ y\in\R, \, x\in\R^d,
\end{equation} 
where $-\infty<a_0<a_1<\cdots <a_L<\infty, \, L=1,2,\ldots$, and for
each $l$, $f_l:\,
 \bigl([-\infty,\infty]^d\setminus \{0\}\to [0,\infty)$ is a bounded
 continuous function with compact support. It follows from catastrophe
 theorem and the fact that any $\sigma$-finite measure can assign
 positive mass 
 to at most countably many disjoint measurable sets, that for each $l$
 we can choose 
 $\vep_l>0$ arbitrarily small such that the sets $A_{lj}$ and 
 $(pB_++(1-p)B_-)^{-1}A_{lj}$  with 
 $$  A_{lj}= \{x:\,
 (j-1)\vep_l<f_l(x)\leq j\vep_l\}
 $$
 are $\nu$-continuity sets for each
 $j=2,\ldots$, for $p=0,1$ and a.e. $0<p<1$. The same is true for
 $j=1$ if we define
 $$
 A_{l1}= \{x:\, \|x\|>\rho_l, \, 
f_l(x)\leq \vep_l\}
$$
for an appropriately chosen small enough $\rho_l>0$. Define functions
 $f_l^+$ and $f_l^-$ by $f_l^+(x) = f_l^-(x)=0$ if $f_l(x)=0$ and 
$ f_l^+(x) = j\vep_l, \, f_l^-(x) =
 (j-1)\vep_l,$ for $x\in A_{lj}$ with $j=1,\ldots,
 \|f_l\|_\infty/\eps_l+1$. The functions
 $$
 f^\pm (y,x)= \sum_{l=1}^L \one_{[a_{l-1},a_l)}(y) f^\pm_l(x), \ y\in\R, \, x\in\R^d
 $$
 satisfy the assumptions of Proposition \ref{pr:pp.step}, and
 $f^{-}(y,x)\leq f(y,x)\leq f^{+}(y,x)$ for all $y,x$. By Proposition
 \ref{pr:pp.step},
 \begin{align*}
&\frac{1}{\nu(\{z: \|z\|> 1\})}
\int_\bbr \left( \int_{B_y^{-1}\Gamma} \exp\bigl\{ -f^+(y,C_yx)\bigr\}\,
   \nu(dx)\right)dy \\
=& \lim_{n\to\infty}
   \frac{E\left[\exp\left\{ -\sum_{i=-\infty}^\infty  f^+\big(i/n, \, G_i^{(n)}
                                    Z_i/\lambda_n\big)\right\} \one_{E_0(n,\Gamma)}\right]}
{n P(h^{(n)}\|Z\|>\lambda_n)} \\
\leq & \liminf_{n\to\infty}
   \frac{E\left[\exp\left\{ -\sum_{i=-\infty}^\infty  f\big(i/n, \, G_i^{(n)}
                                    Z_i/\lambda_n\big)\right\} \one_{E_0(n,\Gamma)}\right]}
{n P(h^{(n)}\|Z\|>\lambda_n)}  \\
\leq & \limsup_{n\to\infty}
   \frac{E\left[\exp\left\{ -\sum_{i=-\infty}^\infty  f\big(i/n, \, G_i^{(n)}
                                    Z_i/\lambda_n\big)\right\} \one_{E_0(n,\Gamma)}\right]}
{n P(h^{(n)}\|Z\|>\lambda_n)}  \\
\leq & \lim_{n\to\infty}
   \frac{E\left[\exp\left\{ -\sum_{i=-\infty}^\infty  f^-\big(i/n, \, G_i^{(n)}
                                    Z_i/\lambda_n\big)\right\} \one_{E_0(n,\Gamma)}\right]}
{n P(h^{(n)}\|Z\|>\lambda_n)} \\
=&\frac{1}{\nu(\{z: \|z\|> 1\})}
\int_\bbr \left( \int_{B_y^{-1}\Gamma} \exp\bigl\{ -f^-(y,C_yx)\bigr\}\,
   \nu(dx)\right)dy.
 \end{align*}
 Letting $\vep_l\to 0$ for each $l=1,\ldots, L$, we have $f^\pm
 (y,x)\to f(y,x)$ for each $(y,x)$, and by the dominated convergence
 theorem we conclude that \eqref{e:laplace.falt} holds for any
 function of the type \eqref{e:semisimple.f}.


 Finally, let $f:\,
\bbr\times \bigl([-\infty,\infty]^d\setminus \{0\}\to [0,\infty)$ be a
continuous function with compact support. For $k=1,2,\ldots$ the
function $f^{(k)}(y,x) = f(i/k,x)$ if $i/k\leq y<(i+1)/k$, $i\in\bbz$ is a
function of the type \eqref{e:semisimple.f}. Furthermore, by the
uniform continuity of $f$ we see that $\theta_k:=\|f-f^{(k)}\|_\infty \to
0$ as $k\to\infty$. Since \eqref{e:laplace.falt} holds for  the
function $f^{(k)}$ for every $k$, we obtain, by the dominated convergence
theorem,
\begin{align*}
&\lim_{k\to\infty} \lim_{n\to\infty} \frac{E\left[\exp\left\{ -\sum_{i=-\infty}^\infty  f^{(k)}\big(i/n, \, G_i^{(n)}
                                    Z_i/\lambda_n\big)\right\} \one_{E_0(n,\Gamma)}\right]}
                              {n P(h^{(n)}\|Z\|>\lambda_n)}\\
=& \frac{1}{\nu(\{z: \|z\|> 1\})}
\int_\bbr \left( \int_{B_y^{-1}\Gamma} \exp\bigl\{ -f(y,C_yx)\bigr\}\,
              \nu(dx)\right)dy.
\end{align*}

Since $f$ is compactly supported, we can find some $y$ such that $\delta_*=\inf\{ \| x\|:\, f(y,x)>0\} > 0$.  
The fact that     \eqref{e:laplace.falt} holds for  the
function $f$ now follows from the estimate
\begin{align*}
&\left| \frac{E\left[\exp\left\{ -\sum_{i=-\infty}^\infty  f\big(i/n, \, G_i^{(n)}
                                    Z_i/\lambda_n\big)\right\} \one_{E_0(n,\Gamma)}\right]}
{n P(h^{(n)}\|Z\|>\lambda_n)} \right. \\
&\left. -\frac{E\left[\exp\left\{ -\sum_{i=-\infty}^\infty  f^{(k)}\big(i/n, \, G_i^{(n)}
                                    Z_i/\lambda_n\big)\right\} \one_{E_0(n,\Gamma)}\right]}
{n P(h^{(n)}\|Z\|>\lambda_n)}\right|\\
\leq& \theta_k \frac{\sum_{i=-\infty}^\infty P\bigl( \| H_i^{(n)}
      Z_i\|>\lambda_n \delta_*\bigr)}{n P(h^{(n)}\|Z\|>\lambda_n)}
\end{align*}
and \eqref{eq:limit_sum_tot} (with $\Gamma$ replaced by
$B_{\delta_*}^c$).


\section{Cluster Size}\label{sec:cluster_size}

In this section we prove Theorem \ref{thm:cluster_size}. The statement
\eqref{eq:clusterSize} describes the probability that all partial sums
$\sum_{k=j}^{j+n-1} X_k$ with $j$ in the range
$[-n\theta_2,n\theta_1]$ are unusually large. The catastrophe
principle suggests that such a rare event is most likely due to a
single exceptionally large value of a noise variable. Considering
first only the noise variables $Z_i$ with $|i|\leq nM$, we introduce
the event 
\begin{equation} \label{e:i.range}
\cE_n^M(\Psi) =
\left\{
\text{there is} \ i \in I_n^M \ \text{such that} \ 
H_{i-j}^{(n)} Z_i \in \lambda_n \Psi \ 
\text{for all} \  j \in [-n\theta_2,n\theta_1]
\right\}.
\end{equation}
We start by connecting this event to the point process $N_n^2$ in
\eqref{e:point.pr}. 

\begin{lemma}\label{lemma:cluster_equi_pt}
Suppose that $\Gamma$  satisfies Assumption \ref{ass:tail_measure}. Let
$0<\delta<\delta_0$ and denote 
\[
  D^{M,\pm\delta}(\theta_1,\theta_2) = \bigl\{(y, z): y \in [-M, M],\, B_{y-t}z
  \in \Psi^{\pm \delta} \ \text{for all} \
  t \in [-\theta_2, \theta_1]\cap \bbq\bigr\}.
\] 
Then for any $M^\prime >M$, $   0<\theta_1^\prime<\theta_1, \,
0<\theta_2^\prime<\theta_2$ we have 
\begin{equation} \label{e:connect.N2}
  \limsup_{n\to \infty} P\bigl(\cE_n^M(\Psi) \big|E_0(n,\Gamma)\bigr)
    - P\bigl(N_n^2\bigl(D^{M^\prime,\delta}(\theta_1^\prime, \theta_2^\prime\bigr)
    \geq 1\big|E_0(n,\Gamma)\bigr) \leq 0 
  \end{equation}
and for any $0<M^\prime<M$, $   \theta_1^\prime>\theta_1, \,
\theta_2^\prime>\theta_2$ we have
\begin{equation} \label{e:connect.N2a}
  \liminf_{n\to \infty} P\bigl(\cE_n^M(\Psi) \big|E_0(n,\Gamma)\bigr)
    - P\bigl(N_n^2\bigl(D^{M^\prime,-\delta}(\theta_1^\prime, \theta_2^\prime\bigr)
    \geq 1\big|E_0(n,\Gamma)\bigr) \geq 0.  
  \end{equation}
\end{lemma}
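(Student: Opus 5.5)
The plan is to compare the event $\cE_n^M(\Psi)$ with the event that the point process $N_n^2$ has an atom $(i/n, h^{(n)}Z_i/\lambda_n)$ in $D^{M',\pm\delta}(\theta_1',\theta_2')$. The comparison rests on a uniform approximation of the coefficient matrices $H_{i-j}^{(n)}/h^{(n)}$ by $B_{(i-j)/n}$. Only the single index $i$ realizing the event matters, and by the catastrophe principle (Proposition \ref{prop:asym_prob_equi}) this index lies in $I_n^M$ with $\|h^{(n)}Z_i\|$ of order $\lambda_n$.

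\textbf{Step 1 (uniform approximation).} I will first strengthen the pointwise convergence $H_{[kn\eps]}^{(n)}/h^{(n)}\to B_{k\eps}$ from the proof of Lemma \ref{lemma:asym_lim_meas} to
\[
\lim_{n\to\infty}\sup_{|m|\le Kn}\bigl\|H_m^{(n)}/h^{(n)}-B_{m/n}\bigr\|=0
\]
for every $K$. This follows from the representation $H_m^{(n)}=H_0^{(n-m)}-H_0^{(-m)}$ (and its analogue for $m>0$), together with the uniform convergence theorem for regularly varying sequences of positive index $1+\beta$ (Karamata) and the continuity of $y\mapsto B_y$. Near the endpoints $m/n\approx 0,1$ one uses that $H_0^{(k)}/h^{(n)}$ is small when $k/n$ is small, and this is uniform.

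\textbf{Step 2 (inclusion for \eqref{e:connect.N2}).} Suppose $\cE_n^M(\Psi)$ occurs through an index $i\in I_n^M$. Then $i/n\in[-M,M]\subset[-M',M']$. For a rational $t\in[-\theta_2',\theta_1']$, choose an integer $j$ with $j/n\to t$; since $\theta'_\cdot<\theta_\cdot$, we have $j\in[-n\theta_2,n\theta_1]$ for large $n$, so $H_{i-j}^{(n)}Z_i\in\lambda_n\Psi$. Write $z=h^{(n)}Z_i/\lambda_n$. Then $B_{i/n-t}z$ differs from $H_{i-j}^{(n)}Z_i/\lambda_n$ by at most
\[
\Bigl(\sup_m\|B_{m/n}-H_m^{(n)}/h^{(n)}\|+\|B_{(i-j)/n}-B_{i/n-t}\|\Bigr)\|z\|.
\]
Uniform continuity of $B$ on compacts controls the second term. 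On the event $\|z\|\le R$, this error is at most $\delta$ for large $n$, uniformly in $i$ and $t$, so $B_{i/n-t}z\in\Psi^\delta$ for all rational $t$ simultaneously. Hence
\[
\cE_n^M(\Psi)\cap\Bigl\{\max_{|i|\le Mn}\|h^{(n)}Z_i\|\le R\lambda_n\Bigr\}\subset\bigl\{N_n^2(D^{M',\delta})\ge1\bigr\}.
\]

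\textbf{Step 3 (inclusion for \eqref{e:connect.N2a}).} The reverse inclusion is symmetric. Given an atom in $D^{M',-\delta}(\theta'_1,\theta'_2)$ with $|i/n|\le M'<M$ and $\|z\|\le R$, take each $j\in[-n\theta_2,n\theta_1]$. Since $\theta_\cdot<\theta'_\cdot$, we can pick a rational $t\in[-\theta'_2,\theta'_1]$ close to $j/n$. The same error bound then gives $H_{i-j}^{(n)}Z_i\in\lambda_n\Psi$ uniformly over all such $j$, using $(\Psi^{-\delta})^{\delta}\subset\Psi$ up to the boundary. A slightly smaller $\delta'$ absorbs this, or one uses directly that $\Psi^{-\delta}$ is at distance at least $\delta$ from $\Psi^c$.

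\textbf{Step 4 (removing the truncation).} It remains to show $\lim_{R\to\infty}\limsup_n P(\exists i\in I_n^{M''}:\|h^{(n)}Z_i\|>R\lambda_n\mid E_0(n,\Gamma))=0$. The unconditional probability is bounded by $2M''n\,P(h^{(n)}\|Z\|>R\lambda_n)\sim 2M''nR^{-\alpha}P(h^{(n)}\|Z\|>\lambda_n)$. Dividing by $P(E_0(n,\Gamma))$, which is of order $nP(h^{(n)}\|Z\|>\lambda_n)$ by \eqref{eq:asym_equivalence} and Assumption \ref{ass:tail_measure}, gives a bound of order $R^{-\alpha}\to0$. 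Combining this with Steps 2 and 3 yields both inequalities. Since Steps 2 and 3 are deterministic set inclusions, neither the point-process convergence nor the continuity of $D$ is needed here.

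The main obstacle is Step 1, specifically uniformity near $m/n\in\{0,1\}$ and on both sides $m<0$ and $m>n$ when $B_+\neq B_-$. I would handle it by splitting $H_m^{(n)}$ into sums of $A_j$ over positive and negative $j$ separately. Each part is then a difference of partial sums of a sequence asymptotic to $a_jB_\pm$, and the uniform convergence theorem for $\sum_{j\le xn}a_j/(na_n)\to x^{1+\beta}/(1+\beta)$ on $x\in[0,K]$ applies. The monotonicity in $x$ together with the continuity of the limit makes the convergence uniform.
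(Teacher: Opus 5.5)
Your proof is correct, but it is organized differently from the paper's.

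\textbf{The paper's route.} The paper never states a uniform approximation of $H_m^{(n)}/h^{(n)}$. It discretizes the location into blocks $J_k^{n,\eps}$ of length $n\eps$ and then performs three replacements:
\begin{enumerate}
\item $H_{i-j}^{(n)}$ by the grid value $H^{(n)}_{[kn\eps]-j}$;
\item the range of $j$ by grid points $j\approx ln\eps$, and $H^{(n)}_{[(k-l)n\eps]}$ by $B_{(k-l)\eps}h^{(n)}$;
\item the piecewise-constant $B^\eps$ by $B$.
\end{enumerate}
This discretization is what forces the slack $M'>M$, $\theta_i'<\theta_i$. The paper then kills the three remainder events $R_1^{n,\eps},R_2^{n,\eps},R_3^{n,\eps}$. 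Each conditional probability is bounded by something like $nP\bigl(f(\eps)h^{(n)}\|Z\|>\lambda_n\delta/4\bigr)/P(E_0(n,\Gamma))=O(f(\eps)^\alpha)$. These bounds use \eqref{eq:coeff_group_err}, the pointwise grid convergence from Lemma~\ref{lemma:asym_lim_meas}, and \eqref{eq:asym_equivalence}, after which $\eps\to0$.

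\textbf{Your route.} You put all the analysis into one deterministic fact: $\sup_{|m|\le Kn}\|H_m^{(n)}/h^{(n)}-B_{m/n}\|\to0$. This does hold by splitting into positive and negative indices and using the monotone partial sums of $(a_j)$ with a continuous limit, as you sketch. Both inequalities then become pathwise inclusions on the truncation event $\{\max_{|i|\le Mn}\|h^{(n)}Z_i\|\le R\lambda_n\}$. The only probabilistic input is the $O(MR^{-\alpha})$ bound for its complement given $E_0(n,\Gamma)$.

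\textbf{Comparison.} Your approach is shorter and avoids the block bookkeeping. It also shows the slack in $M'$ and $\theta_i'$ could be dropped:
\begin{itemize}
\item For the lower bound you may simply take $t=j/n$, which is rational, so the continuity-of-$B$ error disappears.
\item For the upper bound, every $t\in[-\theta_2,\theta_1]$ has an integer $j\in[-n\theta_2,n\theta_1]$ with $|j-tn|\le1$.
\end{itemize}
The paper's route, in exchange, reuses the block estimates already built for Proposition~\ref{prop:int_approx}. It only needs convergence at finitely many grid points for each fixed $\eps$, together with the modulus bound $f(\eps)$; combined, these amount to your uniform statement.
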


\begin{proof}
Fix   $M^\prime >M$, $   0<\theta_1^\prime<\theta_1, \,
0<\theta_2^\prime<\theta_2$ and let 
$M^{\prime\prime}\in (M,M^\prime)$. 
Let $\eps>0$ be a small number. For $-M/\eps\leq k\leq M/\eps$ let $J_k^{n, \eps} =
        [kn\eps, (k+1) n\eps) \cap \ZZ$ and denote $I_n^{M,\eps} =
        \bigcup_{-M/\eps\leq k\leq M/\eps} J_k^{n, \eps} $.  It is
        easy to check that
$$
[-(M-\eps)n,(M-\eps)n]\subseteq        I_n^{M,\eps} \subseteq
[-(M+\eps)n,(M+\eps)n].
$$
Let $\cE_n^{M,\eps}(\Psi)$ be defined identically to $\cE_n^M(\Psi)$
but with  $I_n^{M,\eps}$ replacing $I_n^M$. Let $\delta>0$ be a small
number. We conclude that for $0<\eps<\min(M^{\prime\prime} -M,
\theta_1-\theta_1^\prime, \theta_2-\theta_2^\prime)$ 
and all $n$ large
enough we have
\begin{align*}
 &P\bigl(\cE_n^M(\Psi) \big|E_0(n,\Gamma)\bigr) \leq 
  P\bigl(\cE_n^{M^\prime,\eps}(\Psi) \big|E_0(n,\Gamma)\bigr) \\
  \leq &P\bigl( \text{there is} \ -M^{\prime\prime}/\eps \leq k\leq
                    M^{\prime\prime}/\eps \ \text{and} \ i\in J_k^{n,
   \eps} \ \text{with}  \\
                  & \hskip 0.2in H^{(n)}_{[kn\eps]-j} Z_i\in \lambda_n \Psi^{\delta/4} \
                   \text{for all} \  j\in [-n\theta_2,n\theta_1] \big|E_0(n,\Gamma)\bigr)\\
\leq &P\bigl( \text{there is} \ -M^{\prime\prime}/\eps \leq k\leq
                    M^{\prime\prime}/\eps \ \text{and} \ i\in J_k^{n,
   \eps} \ \text{with}  \ H^{(n)}_{[(k-l)n\eps]} Z_i\in \lambda_n \Psi^{\delta/4} \\
                  & \hskip 0.2in 
                   \text{for all} \   -\theta_2^\prime/\eps\leq l\leq \theta_1^\prime/\eps \big|E_0(n,\Gamma)\bigr)
                    + P\bigl( R_1^{n, \eps}\big|E_0(n,\Gamma)\bigr)\\
\leq &P\bigl( \text{there is} \ -M^{\prime\prime}/\eps \leq k\leq
                    M^{\prime\prime}/\eps \ \text{and} \ i\in J_k^{n,
   \eps} \ \text{with}  \ B_{(k-l)\eps}h_m Z_i\in \lambda_n \Psi^{\delta/2} \\
                  & \hskip 0.2in  
                   \text{for all} \   -\theta_2^\prime/\eps\leq l\leq \theta_1^\prime/\eps \big|E_0(n,\Gamma)\bigr)
                    + P\bigl( R_1^{n,
                    \eps}\big|E_0(n,\Gamma)\bigr)+P\bigl( R_2^{n,
                    \eps}\big|E_0(n,\Gamma)\bigr) \\
\leq &P\Bigl( N_n^2\bigl(D_{\eps,n}^{M^{\prime\prime},\delta}(\theta_1^\prime,\theta_2^\prime)\bigr)\geq 1\big| E_0(n,\Gamma)\Bigr)
 + P\bigl( R_1^{n,
                    \eps}\big|E_0(n,\Gamma)\bigr)+P\bigl( R_2^{n,
       \eps}\big|E_0(n,\Gamma)\bigr) \\
  \leq &P\Bigl( N_n^2\bigl(D^{M^{\prime},\delta}(\theta_1^\prime,\theta_2^\prime)\bigr)\geq 1\big| E_0(n,\Gamma)\Bigr)
 + \sum_{l=1}^3 P\bigl( R_l^{n,
                    \eps}\big|E_0(n,\Gamma)\bigr), 
\end{align*}
where 
\begin{align*}
D_{\eps,n}^{M^\prime,\delta}(\theta_1^\prime,\theta_2^\prime) = &\bigl\{ (y,z):\,\text{there is} \ -M^\prime/\eps \leq k\leq
                   M^\prime/\eps \ \text{and} \; y\in J_k^{n,   \eps} /n \\
                   \
 & \text{with} \ B_{y-t}^\eps z\in \Psi^{\delta/2} \ \text{for all} \
  -\theta_2^\prime\leq t\leq \theta_1^\prime\bigr\}, 
\end{align*}
and the ``remainder'' events are defined by 
\begin{align*}
  R_1^{n, \eps}=& \bigl\{ \text{there is} \ -M^\prime/\eps \leq k\leq
                   M^\prime/\eps \ \text{and} \ i\in J_k^{n,
   \eps}  \\
   &\text{with} \ \| H_{[kn\eps]-j}-H_{i-j}\|\| Z_i\|
                          >\lambda_n\delta/4 \ \text{for some} \ j\in
                           [-n\theta_2,n\theta_1]\bigr\}, \\
 R_2^{n, \eps}=& \bigl\{ \text{there is} \   -M^\prime/\eps \leq k\leq
                   M^\prime/\eps \ \text{and} \ i\in J_k^{n,
   \eps} \\
&\text{with}  \ \|H^{(n)}_{[(k+l)n\eps]} -B_{(k+l)\eps}
                  h^{(n)}\|\|Z_i\|>\delta/4 \ 
                     \text{for some} \ -\theta_1^\prime/\eps\leq
                    l\leq\theta_2^\prime/\eps \bigr\}, \\
R_3^{n, \eps}=& \bigl\{ \text{there is} \ y\in [-M^{\prime},M^{\prime}] \
  \text{with} \ \| B_{y-t}-B_{y-t}^\eps\|h_n \|
                Z_{[yn]}\|>\lambda_n\delta/2 \\
   &\text{for some} \
                -\theta_2^\prime\leq t\leq \theta_1^\prime \bigr\}. 
   \end{align*}
We see that the claim \eqref{e:connect.N2} will follow once we prove
that 
   \begin{equation} \label{e:remainders.small}
 \lim_{\eps\to 0} \limsup_{n\to\infty} P\bigl( R_d^{n,
                    \eps}\big|E_0(n,\Gamma)\bigr) =0 \ \text{for} \ d=1,2,3.
                \end{equation}

For $d=1$ we use \eqref{eq:coeff_group_err} to note that for a fixed
$\eps>0$ and all $n$ large enough, for all relevant $k$ in $j$ we have 
  $\| H_{[kn\eps]-j}-H_{i-j}\| \leq f(\eps) h_n $ for  some $f(\eps)
  \to 0$ as $\eps \to 0$.  Therefore, by \eqref{eq:asym_equivalence}, 
 \begin{align*}
\limsup_{n\to\infty} P\bigl( R_1^{n,
                    \eps}\big|E_0(n,\Gamma)\bigr) 
\leq& 2M^\prime (\theta_1+\theta_2)\limsup_{n\to\infty}  \frac{nP\bigl(f(\eps) h^{(n)} \|Z\|
    > \lambda_n \delta/4\bigr)}{P\bigl( E_0(n,\Gamma)\bigr)} \\
=&O((f(\eps))^\alpha),
  \end{align*}
and   \eqref{e:remainders.small} with $d=1$ follows. 

Next, the continuity of $B_y$ as a function of $y$ means that
$$
\sup_{y\in [-M^\prime,M^\prime], \, t \in [-\theta_2^\prime,
  \theta_1^\prime]} \|B_{y-t} - B_{y - t}^\eps\| \leq g(\eps)
$$
 for some $g(\eps)\to 0$ as $\eps\to 0$. Therefore, repeating the
 argument in the case $d=1$ we see that for a fixed $\eps>0$,
 $$
 \limsup_{n\to\infty} P\bigl( R_3^{n,
                    \eps}\big|E_0(n,\Gamma)\bigr) = O((g(\eps))^\alpha),
                  $$
giving us \eqref{e:remainders.small} with $d=3$, and so it remains to
prove \eqref{e:remainders.small} for $d=2$. Consider all relevant
choices of $k$ and $l$. Taking the case $k+l\leq 0$ first, we write 
\begin{align*}
     & \sup_{k,l:\, k+l\leq 0}\left\|H_{[(k+l)n\eps]}^{(n)}/h^{(n)}-
       B_{(k+l)\eps}\right\| \\
&\leq  \sup_{k,l:\, k+l\leq 0} \left\|H_0^{n-[(k+l)n\eps]} /h^{(n)}- (1-(k+l)\eps)^{1+\beta}B_+ \right\|  \\
     & +  \sup_{k,l:\, k+l\leq 0} \left\|H_0^{[(-(k+l) n\eps)]}/h^{(n)}
     - (-(k+l)\eps)^{1+\beta} B_+ \right\| =   o(1)
\end{align*}
as we have seen before (in the proof of Lemma
\ref{lemma:asym_lim_meas}). Since the case $k+l>0$ is similar, 
 \eqref{e:remainders.small} for $d=2$ follows using the same arguments
 as for the case $d=1$. 

 This proves \eqref{e:connect.N2}, and \eqref{e:connect.N2a} can be
 proved in the same way. 
\end{proof}

We are now ready to prove Theorem \ref{thm:cluster_size}.  Let
$\delta>0$ be a small number. Write
\begin{align} \label{e:split.upper.cl}
    & P\big(J_n^{\Psi,+} > n\theta_1,\, J_n^{\Psi,-} < -n\theta_2 \,\big|\, E_0(n,\Gamma)\big) \\
       \leq  &  P\bigl(  \cE_n^M(\Psi^\delta)\big|E_0(n,\Gamma)\bigr)
               +  P\bigg((\cE_n^M(\Psi^\delta))^c\cap \bigcap_{j \in [-n\theta_2,n\theta_1]}
               E_j(n, \Psi) \Big|E_0(n,\Gamma)\biggr). \notag 
\end{align}
For any 
$M^\prime >M$, $   0<\theta_1^\prime<\theta_1, \,
0<\theta_2^\prime<\theta_2$, by Lemma \ref{lemma:cluster_equi_pt} and
Theorem \ref{thm:ptprocess_longmem} we have 
\begin{align} \label{e:lims.E}
\limsup_{n\to\infty} \, &P\bigl(  \cE_n^M(\Psi^\delta)\big|E_0(n,\Gamma)\bigr)
\leq \limsup_{n\to\infty} P\bigl(N_n^2\bigl(D^{M^\prime,2\delta}(\theta_1^\prime, \theta_2^\prime)\bigr)
    \geq 1\big|E_0(n,\Gamma)\bigr) \\
\notag \leq &P\Bigl( (I,X^2)\in \overline{D^{M^\prime,2\delta}(\theta_1^\prime, \theta_2^\prime)}\Bigr)
\leq  \frac{\int_{-\infty}^\infty  \;
        \nu\!\Big(B_y^{-1} \Gamma \cap \bigcap_{-\theta_2^\prime\le t \le \theta_1^\prime} \!\!  B_{y-t}^{-1}\Psi^{3\delta}\Big) dy}
        {\int_{-\infty}^\infty \nu(B_y^{-1} \Gamma) \,dy} .
\end{align}
Next, we write 
\begin{align*}
P\bigg((\cE_n^M(\Psi^\delta))^c\cap \bigcap_{j \in [-n\theta_2,n\theta_1]}
               E_j(n, \Psi) \Big|E_0(n,\Gamma)\biggr)
= P(B_{n,M})/P\bigl(E_0(n,\Gamma)\bigr),
\end{align*}
where
\begin{align*}
B_{n,M} =&\bigg\{ \text{for every} \ i\in I_n^M \ \text{there is} \
  j(i)\in [-n\theta_2,n\theta_1] \ \text{with} \ H_{i-j(i)}^{(n)}Z_i\notin
  \lambda_n \Psi^\delta \\
&\hskip 0.2 in\text{but} \ \sum_{m=-\infty}^\infty
  H_{m-j}^{(n)}Z_m \in \lambda_n\Psi \ \text{for all} \ j \in
  [-n\theta_2,n\theta_1]\bigg\}. 
\end{align*}
Let $\eps>0$ be a small number. By Lemma \ref{l:elem.fact},
Proposition \ref{prop:asym_prob_equi} and Lemma \ref{lemma:intapprox_negligible}
\begin{align} \label{e:Bnm.one}
P(B_{n,M}) = P\bigl( B_{n,M} \cap \{ \| H_i^{(n)} Z_i\|>\lambda_n\eps
  \ \text{for exactly one} \ i\in\bbz\}\bigr) +P(\hat B_{n,M}), 
\end{align}
where $\hat B_{n,M}$ denotes the complement of the first event within $B_{n,M}$, and satisfies 
\begin{equation} \label{e:Bnm1.1}
\lim_{M\to\infty} \limsup_{n\to\infty} P(\hat
B_{n,M})/P\bigl(E_0(n,\Gamma)\bigr)=0.
\end{equation}
Furthermore,
\begin{align} \label{e: Bnm1.2}
&P\bigl( B_{n,M} \cap \{ \| H_i^{(n)} Z_i\|>\lambda_n\eps
  \ \text{for exactly one} \ i\in\bbz\}\bigr) \\
 \notag  \leq\sum_{i=-\infty}^\infty &P\biggl(  \| H_i^{(n)}
  Z_i\|>\lambda_n\eps, \ \| H_m^{(n)} Z_m\|\leq \lambda_n\eps \
  \text{for all} \ m\not=i \\
\notag & \hskip 1in \text{and} \ 
\left\| \sum_{m\not= i} H_{m-j}^{(n)} Z_m\right\| >\lambda_n\delta \ \text{for some} \ j \in
  [-n\theta_2,n\theta_1]\biggr)\\
\notag \leq \sum_{i=-\infty}^\infty &P\bigl(  \| H_i^{(n)}
  Z_i\|>\lambda_n\eps\bigr) \, \sup_i P\biggl( \| H_m^{(n)} Z_m\|\leq \lambda_n\eps \
  \text{for all} \ m\not=i \\
\notag & \hskip 1in \text{and} \ 
\left\| \sum_{m\not= i} H_{m-j}^{(n)} Z_m\right\| >\lambda_n\delta \ \text{for some} \ j \in
  [-n\theta_2,n\theta_1]\biggr). 
\end{align}

We claim that 
\begin{align} \label{e:last.upper}
&\lim_{n\to\infty}\sup_i P\biggl( \| H_m^{(n)} Z_m\|\leq \lambda_n\eps
                                     \ \text{for all} \ m\not=i \\
 \notag &\hskip 1in  \text{and} \ 
\left\| \sum_{m\not= i} H_{m-j}^{(n)} Z_m\right\| >\lambda_n\delta \ \text{for some} \ j \in
[-n\theta_2,n\theta_1]\biggr) = 0.
\end{align}

Assume, for a moment, that \eqref{e:last.upper} holds. Then  by
\eqref{e:split.upper.cl} - \eqref{e:last.upper}, for any $\delta>0$
and $   0<\theta_1^\prime<\theta_1, \,
0<\theta_2^\prime<\theta_2$, 
\begin{align*}
&\limsup_{n\to\infty} P\big(J_n^{\Psi,+} > n\theta_1,\, J_n^{\Psi,-} <
-n\theta_2 \,\big|\, E_0(n,\Gamma)\big) \\
&\hskip 1in\leq \frac{\int_{-\infty}^\infty  \;
        \nu\!\Big(B_y^{-1} \Gamma \cap \bigcap^*_{-\theta_2^\prime\le t \le \theta_1^\prime} \!\!  B_{y-t}^{-1}\Psi^{3\delta}\Big) dy}
        {\int_{-\infty}^\infty \nu(B_y^{-1} \Gamma) \,dy}.
\end{align*}
Letting $\theta_i^\prime \uparrow \theta_i, \, i=1,2$ and using
continuity of the matrix-valued function $B$, we see that for any
$\delta>0$, 
\begin{align*}
&\limsup_{n\to\infty} P\big(J_n^{\Psi,+} > n\theta_1,\, J_n^{\Psi,-} <
-n\theta_2 \,\big|\, E_0(n,\Gamma)\big) \\
&\hskip 1in \leq \frac{\int_{-\infty}^\infty  \;
        \nu\!\Big(B_y^{-1} \Gamma \cap \bigcap^*_{-\theta_2\le t \le \theta_1} \!\!  B_{y-t}^{-1}\Psi^{4\delta}\Big) dy}
        {\int_{-\infty}^\infty \nu(B_y^{-1} \Gamma) \,dy}.
\end{align*}
Letting $\delta\downarrow 0$ and using
Assumption~\ref{ass:tail_measure} gives us the upper bound  
\begin{align} \label{e:upper.th2}
&\limsup_{n\to\infty} P\big(J_n^{\Psi,+} > n\theta_1,\, J_n^{\Psi,-} <
-n\theta_2 \,\big|\, E_0(n,\Gamma)\big) \\
&\hskip 1in \leq \frac{\int_{-\infty}^\infty  \;
        \nu\!\Big(B_y^{-1} \Gamma \cap \bigcap^*_{-\theta_2\le t \le \theta_1} \!\!  B_{y-t}^{-1}\Psi\Big) dy}
        {\int_{-\infty}^\infty \nu(B_y^{-1} \Gamma) \,dy}. \notag 
      \end{align}
  We now check \eqref{e:last.upper}. Since for any $\rho  > 0$, $\sup_i P(\|H_i^{(n)} Z_i\| > \lambda_n \rho) \to 0$, it is clearly  enough to prove that
\begin{align*}      
&P\biggl( \| H_m^{(n)} Z_m\|\leq \lambda_n\eps
                                     \ \text{for all} \ m\\
 &\hskip 0.6in \text{and} \ 
\left\| \sum_{m} H_{m-j}^{(n)} Z_m\right\| >\lambda_n\delta/2, \ \text{some} \ j \in
[-n\theta_2,n\theta_1]\biggr) \to 0
\end{align*}
as $n\to\infty$, and this holds because the latter probability is bounded from above by
\begin{align*}
&P\bigl( \| S_j^n\|>\lambda_n\delta/2 \ \text{for some} \ j \in
[-n\theta_2,n\theta_1]\big) \\
\leq &P\Bigl( \| S_0^m\|>\lambda_n\delta/2 \ \text{for some} \ m \
  \text{with} \ |m|\leq n\bigl( 1+\max(\theta_1,\theta_2)\bigr)\Bigr)
  \to 0
\end{align*}
by \eqref{e:lambda.n} and the Marcinkiewicz-Zygmund strong law of
large numbers; see e.g. \cite{loeve:1977}, p. 255. 

It remains to prove a lower bound matching \eqref{e:upper.th2}. Let
$\delta>0$. Write
\begin{align} \label{e:split.lower.cl}
    & P\big(J_n^{\Psi,+} > n\theta_1,\, J_n^{\Psi,-} < -n\theta_2 \,\big|\, E_0(n,\Gamma)\big) \\
       \geq  &   P\bigg(\cE_n^M(\Psi^{-\delta})\cap \bigcap_{j \in [-n\theta_2,n\theta_1]}
               E_j(n, \Psi) \Big|E_0(n,\Gamma) \biggr) \notag \\
=& P\bigl( \cE_n^M(\Psi^{-\delta}) \big|E_0(n,\Gamma)\bigr) - P\bigg(\cE_n^M(\Psi^{-\delta})\cap \bigcup_{j \in [-n\theta_2,n\theta_1]}
               E_j(n, \Psi)^c \Big|E_0(n,\Gamma)\biggr). \notag 
\end{align}
For any 
$0<M^\prime <M$, $   \theta_1^\prime>\theta_1, \,
\theta_2^\prime>\theta_2$, by Lemma \ref{lemma:cluster_equi_pt} and
Theorem \ref{thm:ptprocess_longmem} we have 
\begin{align} \label{e:limi.E}
\liminf_{n\to\infty} \, &P\bigl(  \cE_n^M(\Psi^{-\delta})\big|E_0(n,\Gamma)\bigr)
\geq \liminf_{n\to\infty} P\bigl(N_n^2\bigl(D^{M^\prime,{-2\delta}}(\theta_1^\prime, \theta_2^\prime)\bigr)
    \geq 1\big|E_0(n,\Gamma)\bigr) \\
\notag \geq &P\Bigl( (I,X^2)\in \bigl( D^{M^\prime,-2\delta}(\theta_1^\prime, \theta_2^\prime)\bigr)^\circ\Bigr)
\geq  \frac{\int_{-M^\prime}^{M^\prime}  \;
        \nu\!\Big(B_y^{-1} \Gamma \cap \bigcap_{-\theta_2^\prime\le t \le \theta_1^\prime} \!\!  B_{y-t}^{-1}\Psi^{-3\delta}\Big) dy}
        {\int_{-\infty}^\infty \nu(B_y^{-1} \Gamma) \,dy} .
\end{align}
Furthermore,
\begin{align*}
P\bigg(\cE_n^M(\Psi^{-\delta})\cap \bigcup_{j \in [-n\theta_2,n\theta_1]}
               E_j(n, \Psi)^c \Big|E_0(n,\Gamma)\biggr) \leq P\bigl(
  \tilde B_{n,M}\bigr)/P\bigl( E_0(n,\Gamma)\bigr),
\end{align*}
where
\begin{align*}
\tilde B_{n,M} =&\bigg\{ \text{there is} \ i\in I_n^M \ \text{with} \ H_{i-j}^{(n)}Z_i\in
  \lambda_n \Psi^{-\delta } \ \text{for all} \ 
  j\in [-n\theta_2,n\theta_1]   \\
&\hskip 0.2 in\text{but} \ \left\| \sum_{m\not= i}
  H_{m-j}^{(n)}Z_m \right\| > \lambda_n \delta \ \text{for some} \ j \in
  [-n\theta_2,n\theta_1]\bigg\}. 
\end{align*}
An argument analogous to the one used to prove the upper bound
\eqref{e:upper.th2} shows that
$$
P\bigl(
  \tilde B_{n,M}\bigr)/P\bigl( E_0(n,\Gamma)\bigr)\to 0 \ \text{as} \
  n\to\infty.
  $$
  Therefore, \eqref{e:split.lower.cl} and \eqref{e:limi.E} tell us
  that for any $\delta>0$, $0<M^\prime <M$, $   \theta_1^\prime>\theta_1, \,
\theta_2^\prime>\theta_2$
  \begin{align*}&\liminf_{n\to\infty} P\big(J_n^{\Psi,+} > n\theta_1,\, J_n^{\Psi,-} <
-n\theta_2 \,\big|\, E_0(n,\Gamma)\big) \\ 
  &\hskip 1in     \geq \frac{\int_{-M^\prime}^{M^\prime}  \;
        \nu\!\Big(B_y^{-1} \Gamma \cap \bigcap_{-\theta_2^\prime\le t \le \theta_1^\prime} \!\!  B_{y-t}^{-1}\Psi^{-3\delta}\Big) dy}
        {\int_{-\infty}^\infty \nu(B_y^{-1} \Gamma) \,dy}. 
 \end{align*}
 Now the lower bound
 \begin{align*}&\liminf_{n\to\infty} P\big(J_n^{\Psi,+} > n\theta_1,\, J_n^{\Psi,-} <
-n\theta_2 \,\big|\, E_0(n,\Gamma)\big) \\ 
  &\hskip 1in     \geq \frac{\int_{-\infty}^{\infty}  \;
        \nu\!\Big(B_y^{-1} \Gamma \cap \bigcap_{-\theta_2\le t \le \theta_1} \!\!  B_{y-t}^{-1}\Psi\Big) dy}
        {\int_{-\infty}^\infty \nu(B_y^{-1} \Gamma) \,dy}. 
 \end{align*}
matching \eqref{e:upper.th2} follows by letting $M\to\infty$ (and
choosing, say, $M^\prime=M/2$), $\theta_i^\prime\to\theta_i, \, i=1,2$
and $\delta\to 0$. 

This completes the proof of Theorem \ref{thm:cluster_size}.

\section{Some estimates of the tails of sums}\label{sec:lemmas}

In this section we derive some estimates of the tails of certain sums
that are used in key places in the paper. The following lemma is an
extension of Lemma 2 in \cite{wang:samorodnitsky:2025}. 

\begin{lemma} \label{lemma:WLLN_longmem}
Let $(Z_i)$  be i.i.d.  zero mean one-dimensional random variables that are 
  regularly varying with index $\alpha>1$. Let $\{\lambda_n\}$ be a
  sequence of positive real numbers. For each $n=1,2,\ldots$ let 
  $\{\beta_{in}\}_{i =
    -\infty}^\infty $ be a sequence satisfying for all  
  $n$, $|\beta_{in}| \leq
  \lambda_n$  for all $i$, and $\sum_{i = -\infty}^\infty
  |\beta_{in}|^{\alpha - s} = o(\lambda_n^{\alpha -s})$ for all small
  enough $s > 0$. Then for any $\delta, \tau> 0$, $s>0$ small enough
  and all $n\geq
n_0(s,\tau/B)$ (that may also depend on the distribution of $Z$), we have 
   \begin{equation}\label{eq:WLLN_bdd}
  \begin{aligned}
    &  P\left(|\sum_{i = -\infty}^\infty \beta_{in} Z_i\one(|\beta_{in} Z_i| \leq \lambda_n \tau)| > \delta\lambda_n\right) \\
      \leq & \begin{cases}
    \left(\frac{(1+s)\sum_{i = -\infty}^\infty |\beta_{in}|^{\alpha -
          s}}{ \delta \tau^{\alpha-s-1} \lambda_n^{\alpha -
          s}}\right)^{\delta/4\tau} & \text{ if } \ 1< \alpha \leq 2\\
    \left(\frac{E[Z^2] \sum_{i=-\infty}^\infty\beta_{in}^2}{\delta
        \tau\lambda_n^2} \right)^{\delta/4\tau} & \text{ if } \ \alpha > 2
      \end{cases}
  \end{aligned}  
\end{equation}
and 
  \begin{equation}\label{eq:WLLN_tot}
  \begin{aligned}
       & P\left(|\sum_{i = -\infty}^\infty \beta_{in} Z_i| > \delta\lambda_n\right) \\
       \leq & \begin{cases}
           (1+s) \lambda_n^{- \alpha + s}  \sum_{i = -\infty}^\infty
           \beta_{in}^{\alpha - s}  +   \left(\frac{(1+s)\sum_{i = -\infty}^\infty |\beta_{in}|^{\alpha - s}}{ \delta \tau^{\alpha-s-1} \lambda_n^{\alpha - s}}\right)^{\delta/4\tau}
 & \text{ if } \ 1 < \alpha \leq 2\\
            (1+s) \lambda_n^{- \alpha + s}  \sum_{i = -\infty}^\infty
            \beta_{in}^{\alpha - s}  +  \left(\frac{E[Z^2]
                \sum_{i=-\infty}^\infty\beta_{in}^2}{\delta
                \tau\lambda_n^2} \right)^{\delta/4\tau} 
& \text{ if }\ \alpha > 2
       \end{cases} 
  \end{aligned}
  \end{equation}
 
\end{lemma}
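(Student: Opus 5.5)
The plan is to treat \eqref{eq:WLLN_bdd} as a Bennett/Fuk--Nagaev type exponential bound for sums of bounded independent variables, after centering. Then \eqref{eq:WLLN_tot} follows from a union bound over the single-big-jump event. Write $Y_{in}=\beta_{in}Z_i\one(|\beta_{in}Z_i|\le \lambda_n\tau)$, so that $|Y_{in}|\le \tau\lambda_n$. I will apply the argument to the upper tail of $\sum_i Y_{in}$, and to the upper tail of $\sum_i(-Y_{in})$ (the same setting with $Z$ replaced by $-Z$), to control the absolute value.

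\emph{Step 1 (centering).} Since $EZ=0$, we have $|EY_{in}|=|E\beta_{in}Z\one(|\beta_{in}Z|>\tau\lambda_n)|$. By Karamata's theorem and Potter's bounds for the regularly varying tail of $|Z|$, this is at most $C|\beta_{in}|^{\alpha-s}(\tau\lambda_n)^{1-\alpha+s}$. Summing over $i$ and using $\sum_i|\beta_{in}|^{\alpha-s}=o(\lambda_n^{\alpha-s})$ gives $\sum_i|EY_{in}|=o(\lambda_n)$. Hence for $n$ large enough the mean can be absorbed by replacing $\delta$ with $\delta/2$.

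\emph{Step 2 (variance proxy).} For $1<\alpha\le 2$, the truncated second moment satisfies $E Y_{in}^2\le E\beta_{in}^2Z^2\one(|\beta_{in}Z|\le \tau\lambda_n)$. By Karamata and Potter again, this is at most $(1+s)|\beta_{in}|^{\alpha-s}(\tau\lambda_n)^{2-\alpha+s}$ for $n\ge n_0$. Since $|\beta_{in}|\le\lambda_n$, the truncation level $\tau\lambda_n/|\beta_{in}|$ is at least $\tau$. For $\alpha>2$ simply use $EY_{in}^2\le E[Z^2]\beta_{in}^2$. Call $V_n$ the resulting bound on $\sum_i \Var(Y_{in})$.

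\emph{Step 3 (exponential inequality).} For independent centered summands bounded by $y=\tau\lambda_n$, Bennett's inequality gives
$$P\Bigl(\sum_i (Y_{in}-EY_{in})>x\Bigr)\le \exp\Bigl\{\tfrac{x}{y}-\tfrac{x}{y}\log\bigl(1+\tfrac{xy}{V_n}\bigr)\Bigr\}\le \Bigl(\tfrac{eV_n}{xy}\Bigr)^{x/y}.$$
I will take $x=\delta\lambda_n/2$; this bound holds for all series of this form after a routine limiting argument in the number of summands. Plugging in $V_n$ gives a base of order $(1+s)\sum_i|\beta_{in}|^{\alpha-s}/(\delta\tau^{\alpha-s-1}\lambda_n^{\alpha-s})$ (respectively $E[Z^2]\sum_i\beta_{in}^2/(\delta\tau\lambda_n^2)$), raised to the power $\delta/2\tau$.

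\emph{Step 4 (cleaning constants).} The base tends to $0$ by the hypothesis $\sum_i|\beta_{in}|^{\alpha-s}=o(\lambda_n^{\alpha-s})$; for $\alpha>2$ this implies $\sum_i\beta_{in}^2=o(\lambda_n^2)$ because $|\beta_{in}|\le\lambda_n$. So for $n\ge n_0$ the base is at most $1$. Halving the exponent from $\delta/2\tau$ to $\delta/4\tau$ then absorbs the numerical constants $2e$ into the power: $(cb)^{\delta/2\tau}\le b^{\delta/4\tau}$ once $b\le c^{-2}$. This yields \eqref{eq:WLLN_bdd}.

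For \eqref{eq:WLLN_tot}, I will split on whether some $|\beta_{in}Z_i|>\tau\lambda_n$. The probability of that event is at most $\sum_iP(|\beta_{in}Z|>\tau\lambda_n)$, which by Potter's bounds is at most $(1+s)\lambda_n^{-\alpha+s}\sum_i|\beta_{in}|^{\alpha-s}$ up to the $\tau$-dependent factor built into $n_0$. On the complement, the full sum equals the truncated sum, which is bounded by \eqref{eq:WLLN_bdd}.

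The main obstacle I expect is making the Potter/Karamata estimates in Steps 1--2 uniform in $i$, since the truncation level $\tau\lambda_n/|\beta_{in}|$ need not tend to infinity. The fix I would try first is to split according to whether $\tau\lambda_n/|\beta_{in}|$ exceeds a fixed large $x_0$. When it does, Potter's bounds apply with the factor $1+s$. When it does not, $|\beta_{in}|\ge \tau\lambda_n/x_0$, so the terms are trivially bounded by $C(\tau,x_0)|\beta_{in}|^{\alpha-s}\lambda_n^{2-\alpha+s}$, and the number of such $i$ is $o(1)$ uniformly by the summability hypothesis. Hence for large $n$ there are no such $i$, and the dependence of $n_0$ on $s$ and $\tau$ is exactly of the form allowed in the statement.
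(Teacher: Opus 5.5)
Your route is the paper's: truncate at $\tau\lambda_n$, control the truncated mean and second moment by Karamata's theorem and Potter's bounds, apply an exponential inequality for bounded independent summands, and get \eqref{eq:WLLN_tot} from a union bound over the event that some $|\beta_{in}Z_i|$ exceeds $\tau\lambda_n$. You use Bennett's inequality where the paper uses Prokhorov's bound $\bigl(ct/\sum_i\Var(Y_i)\bigr)^{-t/2c}$.

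For $1<\alpha\le 2$ your argument goes through. Your remark that $\sum_i|\beta_{in}|^{\alpha-s}=o(\lambda_n^{\alpha-s})$ forces $\max_i|\beta_{in}|=o(\lambda_n)$ is useful: it makes the truncation levels $\tau\lambda_n/|\beta_{in}|$ tend to infinity uniformly in $i$, a uniformity the paper uses without comment. The remaining bookkeeping is harmless as long as the base tends to $0$, since the slack you create by halving the exponent absorbs it:
the centered summands are bounded by $\tau\lambda_n+|EY_{in}|\le(1+o(1))\tau\lambda_n$ rather than by $\tau\lambda_n$;
the two-sided bound carries a factor $2$;
the factor $\tau^{-\alpha+s}$ in your union bound disappears if you apply Potter's bound with a slightly smaller exponent and use $\max_i|\beta_{in}|=o(\lambda_n)$.

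The step that fails is Step 4 for $\alpha>2$. You assert that the hypothesis implies $\sum_i\beta_{in}^2=o(\lambda_n^2)$ because $|\beta_{in}|\le\lambda_n$, but the comparison goes the other way. For $\alpha-s>2$ and $|\beta_{in}|/\lambda_n\le 1$ one has $(|\beta_{in}|/\lambda_n)^{\alpha-s}\le(|\beta_{in}|/\lambda_n)^{2}$, so smallness of the $\ell^{\alpha-s}$ norm says nothing about the $\ell^2$ norm. For example, take $\beta_{in}=\lambda_n m_n^{-1/2}$ for $1\le i\le m_n$ and $0$ otherwise, with $m_n\to\infty$. This satisfies every hypothesis of the lemma, yet $\sum_i\beta_{in}^2=\lambda_n^2$. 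Hence the base $b_n=E[Z^2]\sum_i\beta_{in}^2/(\delta\tau\lambda_n^2)$ need not tend to $0$. Your absorption $(2eb_n)^{\delta/2\tau}\le b_n^{\delta/4\tau}$ requires $b_n\le(2e)^{-2}$, so it is unjustified in the range $(2e)^{-2}<b_n<1$, where the claimed bound is not trivial; the same applies to \eqref{eq:WLLN_tot} for $\alpha>2$.

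There are two ways to repair this. First, you can add $\sum_i\beta_{in}^2=o(\lambda_n^2)$ as a hypothesis when $\alpha>2$. This is what actually holds where the lemma is used: in Lemma~\ref{lemma:prob_equi_sumDeviation} one has $\sum_i|(H_i^{(n)})_{mk}|^2\le Cn(h^{(n)})^2=o(\lambda_n^2)$ by \eqref{e:lambda.n}. Second, you can avoid putting numerical constants into the base at all. The paper's use of Prokhorov's inequality does this: with $c=2\tau\lambda_n$ and $t=\delta\lambda_n/2$, the ratio $\sum_i\Var(Y_i)/(ct)$ is exactly the base in \eqref{eq:WLLN_bdd}. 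With these choices the exponent $t/2c$ equals $\delta/8\tau$, so the paper's own constants are loose as well. The application only needs the base to vanish and the exponent to be large for small $\tau$.
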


The proof of the lemma relies on the following concentration
inequality due to \cite{prokhorov:1959}: let $Y_1,\ldots, Y_m$ be
independent zero mean random variables 
   such that for some $c>0$ we have 
   $Pr(|Y_i|\leq c) = 1$ for all $i$. Then for any $t> 0$, 
   $$P(Y_1+ \cdots + Y_m>t)\leq \left(\frac{ct}{\sum_{i=1}^m \Var(Y_i)}\right)^{-t/2c},$$
and the bound remains valid for $m=\infty$ as long as the series
converges. 
\begin{proof}[Proof of the lemma] 
By \eqref{e:cond.conv} the infinite sums in the statement of the lemma
converge. For \eqref{eq:WLLN_bdd}, let $\tau>0$ and denote $Z_i^* =
Z_i\one(|\beta_{in}Z_i| \leq \lambda_n \tau)$. We use the
concentration inequality with $Y_i = \beta_{in} Z_i^* - E[\beta_{in}
Z_i^*]$, $c = 2\lambda_n \tau, \, t = \delta \lambda_n /2$.  Note that
by the Karamata theorem, for some regularly varying with exponent
$1-\alpha$ function $g$, $s>0$ and all large $n$, 
$$
\big| E[Z_i^*]\big|  = g\bigl(\lambda_n\tau/|\beta_{in}|\bigr) \leq (1+s)
\bigl(\lambda_n\tau/|\beta_{in}|\bigr)^{1-\alpha + s},
$$ 
with the last step using the Potter bounds. Therefore, for small
enough $s>0$ and large enough $n$, by assumption $\sum_{i = -\infty}^\infty |\beta_{in}|^{\alpha - s} = o(\lambda_n^{\alpha -s})$,
$$
\left|\sum_{i} \beta_{in} E[Z_i^*]\right| \leq
(1+s)(\lambda_n\tau)^{1-\alpha -s} \sum_i \beta_{in}^{\alpha - s}
\leq \delta \lambda_n/2.
$$
If  $1< \alpha \leq 2$, then for some regularly varying with exponent
$2-\alpha$ function $h$, $s>0$ and all large $n$, 
\begin{align*}
    \sum_i \Var(Y_i) = \sum_{i} \beta_{in}^2h\bigl(
  \lambda_n\tau/|\beta_{in}|\bigr) \leq (1+s)(\lambda_n\tau)^{2-\alpha
  + s} \sum_i |\beta_{in}|^{\alpha - s},
\end{align*}
and the concentration inequality gives us \eqref{eq:WLLN_bdd}. If
$\alpha > 2$, then we use 
$$
\sum_i Var(Y_i) = \sum_i \beta_{in}^2 Var(Z_i^*)  \leq E[Z_0^2] \sum_i
\beta_{in}^2, 
$$
and the concentration inequality again gives us \eqref{eq:WLLN_bdd}.

Finally, \eqref{eq:WLLN_tot}  follows from \eqref{eq:WLLN_bdd} by
writing 
\begin{align*}
 P\left( \left| \sum_{i=-\infty}^\infty \beta_{in} Z_i
       \right| > n\delta\right) \leq& P\bigl( \max_i |\beta_{in}Z_i| >
  n\tau \bigr) \\
+& P\left( \left| \sum_{i=-\infty}^\infty \beta_{in} Z_i
        \one(|\beta_{in}Z_i|\leq \tau n)\right| > n\delta\right).
\end{align*}
and using the Potter bounds in 
\begin{align*}
    P(\max_i |\beta_{in} Z_i |> n\tau)\leq& \sum_{i} P(|\beta_{in} Z_i |> n\tau) 
\leq (1+s) \lambda_n^{-\alpha  + s} \sum_{i}
     |\beta_{in}|^{\alpha-s}
\end{align*}
for all large $n$.     
\end{proof}




\end{document}